\newcounter{mycounter}
\algnewcommand{\IfThenElse}[3]{% \IfThenElse{<if>}{<then>}{<else>}
  \State \algorithmicif\ #1\ \algorithmicthen\ #2\ ; \algorithmicelse\ #3.}
\newcommand{\Set}[2]{\left\lbrace #1 : #2 \right\rbrace}
\DeclarePairedDelimiter{\ceil}{\lceil}{\rceil}
\DeclarePairedDelimiter\abs{\lvert}{\rvert}%
\DeclarePairedDelimiter\norm{\lVert}{\rVert}%
\DeclarePairedDelimiterX{\inner}[2]{\langle}{\rangle}{#1, #2}
\DeclareMathOperator*{\argmin}{arg\,min}
\newcommand{\exclude}[1]{}
\newcommand{\tr}[1]{\ensuremath{{#1}^\text{T}}}
\newcommand{\uset}[2]{\ensuremath{\underset{#1}{#2}}}
\newcommand{\prob}[1]{\mathbb{P}\left\lbrace{#1}\right\rbrace}
\newcommand{\bexpect}[1]{\mathbb{E}\big[{#1}\big]}
\newcommand{\Bexpect}[1]{\mathbb{E}\Big[{#1}\Big]}
\newcommand{\1}[1]{\mathds{1}\left[#1\right]}
\newcommand{\cl}[1]{\textup{cl}\left(#1\right)}
\newcommand{\conv}[1]{\textup{co}#1}
\newcommand{\median}[1]{\textup{median}\left\lbrace{#1}\right\rbrace}
\newcommand{\diam}[1]{\textup{diam}\left(#1\right)}
\newcommand{\proj}[2]{\textup{proj}\left( #1 , #2 \right)}
\newcommand{\ph}{\hat{p}}
\newcommand{\xv}{X_{\nu}}
\newcommand{\A}{\mathcal{A}}
\newcommand{\I}{\mathcal{I}}
\newcommand{\J}{\mathcal{J}}
\newcommand{\N}{\mathbb{N}}
\renewcommand{\P}{\mathcal{P}}
\newcommand{\Pb}{\mathbb{P}}
\newcommand{\R}{\mathbb{R}}
\newcommand{\infeasible}{\texttt{infeas}}
\newcommand{\tle}{\texttt{tle}}
\newcommand{\halmos}{}
\newtheorem{assumption}{Assumption}
\newtheorem{casestudy}{Case study}
\let\oldassumption\assumption
\renewcommand{\assumption}{\oldassumption\normalfont}
\let\oldcasestudy\casestudy
\renewcommand{\casestudy}{\oldcasestudy\normalfont}
\newtheorem{theorem}{Theorem}
\newtheorem{lemma}{Lemma}
\newtheorem{proposition}{Proposition}
\newtheorem{example}{Example}
\let\oldtheorem\theorem
\renewcommand{\theorem}{\oldtheorem\normalfont}
\let\oldlemma\lemma
\renewcommand{\lemma}{\oldlemma\normalfont}
\let\oldremark\remark
\renewcommand{\remark}{\oldremark\normalfont}
\let\olddefinition\definition
\renewcommand{\definition}{\olddefinition\normalfont}
\let\oldcorollary\corollary
\renewcommand{\corollary}{\oldcorollary\normalfont}
\let\oldproposition\proposition
\renewcommand{\proposition}{\oldproposition\normalfont}
\let\oldexample\example
\renewcommand{\example}{\oldexample\normalfont}
\providecommand{\keywords}[1]{{\small \textbf{Key words:} #1}}
\title{A stochastic approximation method for approximating the efficient frontier of chance-constrained nonlinear programs\thanks{This research is supported by the Department of Energy, Office of Science, Office of Advanced Scientific Computing Research, Applied Mathematics program under Contract Number DE-AC02-06CH11347.}}
\author[1]{Rohit Kannan}
\author[2]{James Luedtke}
\affil[1]{Wisconsin Institute for Discovery, University of Wisconsin-Madison, Madison, WI, USA. \protect\\ E-mail: rohit.kannan@wisc.edu}
\affil[2]{Department of Industrial \& Systems Engineering and Wisconsin Institute for Discovery, \protect\\ University of Wisconsin-Madison, Madison, WI, USA. E-mail: jim.luedtke@wisc.edu}
\date{\hspace*{0.87in}Version 3 (this document): May 28, 2020 \\[0.05in] Version 2: November 8, 2019 \\[0.05in] \hspace*{0.07in}Version 1: December 17, 2018}
\begin{document}
%%%%%%%%%%%%%%%%

\maketitle

\begin{abstract}
We propose a stochastic approximation method for approximating the efficient frontier of chance-constrained nonlinear programs.
Our approach is based on a bi-objective viewpoint of chance-constrained programs that seeks solutions on the efficient frontier of optimal objective value versus risk of constraint violation.
To this end, we construct a reformulated problem whose objective is to minimize the probability of constraints violation subject to deterministic convex constraints (which includes a bound on the objective function value).
We adapt existing smoothing-based approaches for chance-constrained problems to derive a convergent sequence of smooth approximations of our reformulated problem, and apply a projected stochastic subgradient algorithm to solve it.
%In order to be able to apply a projected stochastic subgradient algorithm to solve our reformulation with the probabilistic objective, we adapt existing smoothing-based approaches for chance-constrained problems to derive a convergent sequence of smooth approximations of our reformulated problem.
In contrast with exterior sampling-based approaches (such as sample average approximation) that approximate the original chance-constrained program with one having finite support, our proposal converges to stationary solutions of a smooth approximation of the original problem, thereby avoiding poor local solutions that may be an artefact of a fixed sample.
Our proposal also includes a tailored implementation of the smoothing-based approach that chooses key algorithmic parameters based on problem data.
Computational results on four test problems from the literature indicate that our proposed approach can 
efficiently determine good approximations of the efficient frontier. \\[0.05in]
%We also present a bisection approach for solving chance-constrained programs with a prespecified risk level.
\keywords{stochastic approximation, chance constraints, efficient frontier, stochastic subgradient}
% \PACS{PACS code1 \and PACS code2 \and more}
% \subclass{MSC code1 \and MSC code2 \and more}
\end{abstract}

\section{Introduction}
\label{sec:introduction}

Consider the following chance-constrained nonlinear program (NLP):
\begin{alignat}{2}
\nu^*_{\alpha} \:  := \: &\uset{x \in X}{\min} \:\: && f(x) \tag{CCP} \label{eqn:ccp} \\
&\:\: \text{s.t.} && \prob{g(x,\xi) \leq 0} \geq 1 - \alpha, \nonumber
\end{alignat}
where $X \subset \R^n$ is a nonempty closed convex set, $\xi$ is a random vector with probability distribution $\P$ supported on $\Xi \subset \R^d$, $f:\R^n \to \R$ is a continuous quasiconvex function, $g:\R^n \times \R^d \to \R^{m}$ is continuously differentiable, 
%with a locally Lipschitz continuous gradient, 
and the constant $\alpha \in (0,1)$ is a user-defined acceptable level of constraint violation. 
%and for each $x \in \R^n$, we write $\prob{g(x,\xi) \leq 0}$ to denote the probability with which the random constraint $g(x,\xi) \leq 0$ is satisfied.
%Further assumptions on the set $X$ and the constraint functions $g$ will be made in the ensuing sections.
We explore a stochastic approximation algorithm for approximating the efficient frontier of optimal objective
value~$\nu^*_{\alpha}$ for varying levels of the risk tolerance $\alpha$ in~\eqref{eqn:ccp},
i.e., to find a collection of solutions which have varying probability of constraint violation, and
where for each such solution the objective function value is (approximately) minimal among solutions having that
probability of constraint violation or smaller (see also \citep{rengarajan2009estimating,luedtke2014branch}).
%Apart from making a few mild assumptions on the data defining the above problem (see Section~\ref{sec:smoothcon}), most of our results only assume that it is possible to generate i.i.d.~samples of $\xi$ from $\P$ (our results regarding `convergence of stationary solutions' additionally require $\xi$ to be a continuous random variable satisfying additional distributional assumptions). 
Most of our results only require mild assumptions on the data defining the above problem, such as regularity conditions
on the functions $g$ and the ability to draw independent and identically distributed (i.i.d.) samples of $\xi$ from $\P$, and do not assume Gaussianity of the distribution $\P$ or make structural assumptions on the functions~$g$ (see Section~\ref{sec:smoothapp}). 
%In particular, we will not impose restrictive assumptions such as Gaussianity of the distribution $\P$ or make structural assumptions on the random constraint functions~$g$.
Our result regarding `convergence to stationary solutions' of~\eqref{eqn:ccp} additionally requires $\xi$ to be a
continuous random variable satisfying some mild distributional assumptions.
%\footnote{We believe that this result can be extended to a more general setting (see Section~\ref{sec:smoothapp}), but a proof eludes us.}.
%the set $X$, the objective function $f$, the constraint functions $g$, and the probability distribution $\P$ in the ensuing sections, 
We note that Problem~\eqref{eqn:ccp} can model joint nonconvex chance constraints, nonconvex deterministic constraints (by incorporating them in $g$), and even some recourse structure (by defining $g$ through the solution of auxiliary optimization problems, see Appendix~\ref{app:recourse}). 
%Section~\ref{subsec:limitapp} considers an extension for the case when Problem~\eqref{eqn:ccp} contains multiple sets of joint chance constraints.

Chance-constrained programming was introduced as a modeling framework for optimization under uncertainty by~\citet{charnes1958cost}, and was soon after generalized to the joint chance-constrained case by~\citet{miller1965chance} and to the nonlinear case by~\citet{prekopa1970probabilistic}.
%Pr\'{e}kopa~\citep{prekopa1970probabilistic,prekopa2013stochastic} pioneered the investigation of properties of chance-constrained (convex) nonlinear programs, established sufficient conditions under which they can be reformulated as deterministic convex programs, and developed nonlinear programming techniques for their solution.
Apart from a few known tractable cases, e.g., see~\citet{prekopa2013stochastic} and~\citet{lagoa2005probabilistically},
solving chance-constrained NLPs to optimality is in general hard since the feasible region is not guaranteed to be
convex  and evaluating feasibility of the probabilistic constraint involves multi-dimensional integration.
%Despite continued efforts towards delineating situations in which chance-constrained programs can be solved efficiently~\citep{prekopa2013stochastic,lagoa2005probabilistically,henrion2008convexity,van2017eventual}, it is anticipated that solving chance-constrained NLPs is in general hard since the feasible region is not guaranteed to be convex (a notable case is when the underlying deterministic problem is itself nonconvex) and even merely evaluating the probabilistic constraint involves multi-dimensional integration.
Motivated by a diverse array of applications~\citep{calafiore2011research,gotzes2016quantification,li2008chance,
zhang2011chance}, many numerical approaches for chance-constrained NLPs attempt to determine good-quality feasible
solutions in reasonable computation times (see Section~\ref{sec:existapp} for a brief review).

%The last decade has also seen a surge of interest in another subarea of stochastic optimization that encompasses optimization problems with nonconvex expected value objectives and simple 
%(deterministic) 
%convex constraints, primarily inspired by applications in machine learning.
%This has resulted in a flurry of advances, especially in the development and analysis of first-order algorithms, e.g.,~\citet{bottou2018optimization,davis2018stochastic,
%drusvyatskiy2016efficiency,duchi2018stochastic,
%ghadimi2016accelerated,ghadimi2016mini,davis2019stochastic}, and~\citet{nemirovski2009robust}, that guarantee finding first-order stationary points of a broad class of such problems in a well-defined sense.

Rather than solving \eqref{eqn:ccp} for a fixed $\alpha$, we are interested in approximating the efficient frontier
between risk of constraint violation and objective value since 
this efficient frontier provides decision-makers with a tool for choosing the parameter $\alpha$ in~\eqref{eqn:ccp}.
An added advantage of estimating the efficient frontier of~\eqref{eqn:ccp} is that we automatically obtain an estimate of the efficient frontier of a family of distributionally robust counterparts~\citep{jiang2016data}.
In this work, we investigate the potential of a stochastic subgradient algorithm~\citep{davis2018stochastic,ghadimi2016mini,nurminskii1973quasigradient} for approximating the efficient frontier of~\eqref{eqn:ccp}. 
While our proposal is more naturally applicable for approximating the efficient frontier, we also present a bisection strategy for solving~\eqref{eqn:ccp} for a fixed risk level $\alpha$.

We begin by observing that the efficient frontier can be recovered by solving the following stochastic optimization problem~\citep{rengarajan2009estimating}:
\begin{alignat}{2}
\label{eqn:sp}
&\uset{x \in X}{\min} \:\: && \prob{g(x,\xi) \not\leq 0} \\
&\:\: \text{s.t.} && f(x) \leq \nu, \nonumber
\end{alignat}
where the above formulation determines the minimum probability of constraints violation given an upper bound $\nu$ on the objective function value. 
The reformulation in~\eqref{eqn:sp} crucially enables the use of stochastic subgradient algorithms for its approximate solution.
Assuming that~\eqref{eqn:ccp} is feasible for risk levels of interest, solving~\eqref{eqn:sp} using
varying values of the bound $\nu$ yields the same efficient frontier as solving~\eqref{eqn:ccp} using varying values of $\alpha$~\citep{rengarajan2009estimating}.
% (this can be argued by noting that two different risk levels cannot correspond to the same optimal objective value on the efficient frontier)~\citep{rengarajan2009estimating}.

Let $\mathds{1}:\R \to \{0,1\}$, defined by $\1{z} = 1$ if $z > 0$ and $\1{z} = 0$ if $z \leq 0$, denote the characteristic function of the set $(0,+\infty)$, and $\max\left[\1{g(x,\xi)}\right]$ denote $\uset{j\in \{1,\cdots,m\}}{\max} \1{g_j(x,\xi)}$.
Note that the following problem is equivalent to~\eqref{eqn:sp}:
\[
\uset{x \in \xv}{\min} \:\: \bexpect{\max\left[\1{g(x,\xi)}\right]},
\]
where $\xv$ denotes the closed convex set $\Set{x \in X}{f(x) \leq \nu}$.
The above formulation is almost in the form that stochastic gradient-type algorithms can solve~\citep{davis2018tame},
but poses two challenges that prevent immediate application of such algorithms: the step function $\1{\cdot}$ is discontinuous and the $\max$ function is nondifferentiable.
We propose to solve a partially-smoothened approximation of the above formulation using the projected stochastic subgradient algorithm of~\citet{nurminskii1973quasigradient} and~\citet{ermol1998stochastic}.
In particular, we use the procedure outlined in~\citet{davis2018stochastic} to solve our approximation (also see the closely-related proposals of~\citet{ermoliev1988stochastic},~\citet{ghadimi2016mini}, and the references in these works).
To enable this, we replace the discontinuous step functions in the above formulation using smooth
%\footnote{We will refer to the approximations to the step function as `smooth' throughout this work, although continuously differentiable approximations with Lipschitz continuous gradients will suffice for the purposes of our analysis. Note that we do not require the approximations $\phi_j$ to overestimate the step function (cf.~\citep{geletu2017inner,shan2014smoothing}).} 
approximations $\phi_j: \R \to \R$, $j \in \{1,\cdots,m\}$, to obtain the following approximation to~\eqref{eqn:sp}:
\begin{alignat}{2}
\label{eqn:app}
&\uset{x \in \xv}{\min} \: && \bexpect{\max\left[\phi\left(g(x,\xi)\right)\right]},  \tag{$\text{APP}_{\nu}$}
\end{alignat}
where $\max\left[\phi\left(g(x,\xi)\right)\right]$ is shorthand for the composite function $\max\{\phi_1\left(g_1(x,\xi)\right),\cdots,\phi_m\left(g_m(x,\xi)\right)\}$.
Section~\ref{subsec:exmsmoothapprox} presents some options for the smooth approximations $\phi_j$.

Our first primary contribution is to analyze the convergence of optimal solutions and stationary points
of \eqref{eqn:app} to those of problem \eqref{eqn:sp}.
This analysis closely follows previous work on the use of similar approximations for the
chance-constrained problem \eqref{eqn:ccp} (cf.~\citet{norkin1993analysis,hong2011sequential,shan2014smoothing,shan2016convergence,
hu2013smooth}, and~\citet{geletu2017inner}). Our analysis is required because: i.~our approximations~\eqref{eqn:app} involve a convergent sequence of approximations of the objective function of~\eqref{eqn:sp} rather than a convergent sequence of approximations of the feasible region of~\eqref{eqn:ccp}, ii.~we handle joint chance-constrained NLPs directly without reducing them to the case of individual chance-constrained programs,
%smoothing the $\max$ function, 
and iii.~our computational results rely on a slight generalization of existing frameworks for smooth approximation of
the step function. Related to the first point, \citet{norkin1993analysis} analyzes smooth approximations in the
probability maximization context, but their analysis is restricted to the case when the probability function being
maximized is concave (see
also~\citet[Example~5.6]{ermoliev1995minimization},~\citet[Section~4.4.2]{shapiro2009lectures},~\citet{lepp2009}, and
the references in~\citet{norkin1993analysis} for related work).

Our second primary contribution is to propose a practical implementation of a method based on the projected stochastic subgradient algorithm
of~\citet{nurminskii1973quasigradient} and~\citet{ermol1998stochastic} (we use the specific version proposed in~\citet{davis2018stochastic}) to obtain stationary points for problem
\eqref{eqn:app} for varying levels of $\nu$, and thus obtain an approximation of the efficient frontier.
For each fixed $\nu$ this method yields convergence to a neighborhood of an approximately stationary solution since the objective function of~\eqref{eqn:app} is weakly convex under mild
assumptions~\citep{drusvyatskiy2016efficiency,nurminskii1973quasigradient}. 
%Because we rely on an algorithm that does not guarantee finding globally optimal solutions to~\eqref{eqn:app}, our proposal may not yield the efficient frontier of~\eqref{eqn:ccp}, but may only result in a conservative (`achievable') approximation of it.
%the efficient frontier. 
A similar approach has been proposed in~\citet{norkin1993analysis}. In particular,~\citet{norkin1993analysis} considers the case when the function $g$ is jointly convex with respect to $x$ and $\xi$, uses Steklov-Sobolev averaging for constructing~\eqref{eqn:app} from~\eqref{eqn:sp}, assumes that the negative of the objective of~\eqref{eqn:sp} and the negative of the objective of~\eqref{eqn:app} are `$\alpha$-concave functions' 
%for some $\alpha \in \R$ 
(see Definition~2.3 of~\citet{norkin1993analysis} or Definition~4.7 of~\citet{shapiro2009lectures}), and establishes the rate of convergence of the stochastic quasigradient method of~\citet{ermoliev1988stochastic,nemirovsky1983problem} for solving~\eqref{eqn:app} to global optimality in this setting.
Our contributions differ from those of~\citet{norkin1993analysis} in two main respects: we present theory for the
case when~\eqref{eqn:app} may not be solved to global optimality (see~\citet{lepp2009} for a related approach), and we
design and empirically test a practical implementation of a stochastic approximation algorithm for approximating the
solution of~\eqref{eqn:sp}. 

%Smoothing-based approaches to probability maximization problems have previously been considered by~\citet{norkin1993analysis}, where~\eqref{eqn:sp} was approximated by~\eqref{eqn:app} under certain convexity assumptions on $g$ (also see~\citep[Example~5.6]{ermoliev1995minimization},~\citep[Section~4.4.2]{shapiro2009lectures}, and the references therein).

A major motivation for this study is the fact that sample average approximations~\citep{luedtke2008sample}
of~\eqref{eqn:ccp} may introduce ``spurious local optima'' as a byproduct of sampling as illustrated by the following
example from Section~5.2 of~\citet{curtis2018sequential} (see also the discussion in~\citet{ermoliev1997nonsmooth,ermol1998stochastic}).

\begin{example}
\label{exm:spurious_min}
Consider the instance of~\eqref{eqn:ccp} with $n = 2$, $d = 2$, $X = \R^n$, $\xi_1 \sim U[-12,12]$, $\xi_2 \sim U[-3,3]$, $f(x) = x_2$, and $g(x,\xi) = \frac{x^4_1}{4} - \frac{x^3_1}{3} - x^2_1 + 0.2x_1 - 19.5 + \xi_2 x_1 + \xi_1\xi_2 - x_2$ with $\alpha = 0.05$.
Figure~\ref{fig:spurious_min} plots the (lower) boundary of the feasible region defined by the chance constraint in a neighborhood of the optimal solution $x^* \approx (1.8,-0.1)$ along with the boundaries of the feasible region generated by a ten thousand sample average approximation (SAA) of the chance constraint and the feasible region generated by the smooth approximation of the step function adopted in this work.
The points highlighted by triangles in Figure~\ref{fig:spurious_min} are local minima of the SAA approximation, but are
not near any local minimia of the true problem. We refer to these as spurious local minima.
%This example shows that exterior sampling approaches can induce spurious local minima as a byproduct of sampling, and that smoothing-based approaches in conjunction with interior sampling approaches for solving the smooth approximation can mitigate this issue.
\end{example}

\begin{figure}[t]
\begin{center}
\caption{Comparison of the boundary of the feasible region (in a neighborhood of the optimal solution) of a SAA of Example~\ref{exm:spurious_min} (with sample size $10000$) with that of the smooth approximation of the step function adopted in this work (the function $\phi$ is defined in Example~\ref{exm:oursmoothapprox} and the smoothing parameter $\tau = 1.085$).} \label{fig:spurious_min}
\includegraphics[width=0.7\linewidth]{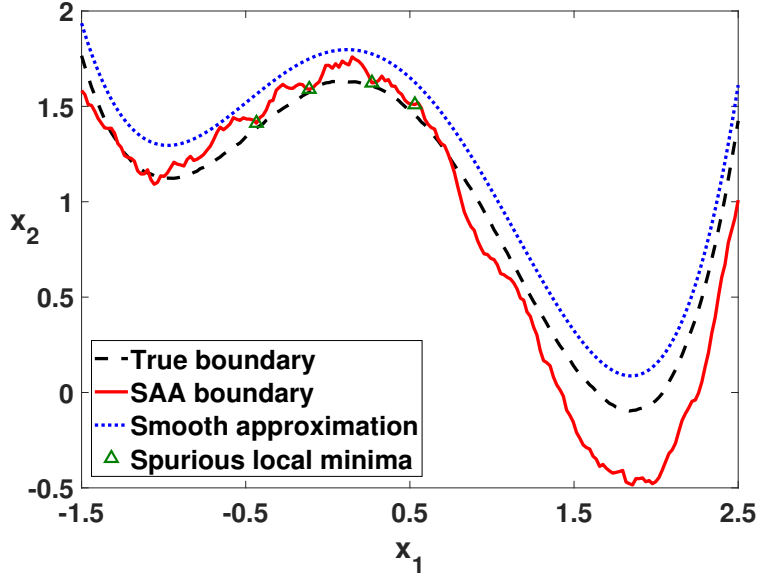}
\end{center}
\end{figure}

%This may unnecessarily complicate the process of obtaining a good approximate solution to~\eqref{eqn:ccp}, especially in cases when $\xi$ is a `well-behaved' continuous random variable that makes the probability function $p(x) := \prob{g(x,\xi) \not\leq 0}$ `well behaved' (even though it might be nonconvex).
Example~\ref{exm:spurious_min} illustrates that obtaining a local optimal solution of an accurate SAA of~\eqref{eqn:ccp} may not necessarily yield a good solution even when $\xi$ is a `well-behaved' continuous random variable that makes the probability function $p(x) := \prob{g(x,\xi) \not\leq 0}$ `well behaved' (even though it might be nonconvex).
%While smoothing the chance constraint~\citep{geletu2017inner} helps mitigate this issue as evidenced by Figure~\ref{fig:spurious_min}, their practical deployment still requires integration within a sample average approximation (SAA) framework. 
While smoothing the chance constraint helps mitigate this issue (cf.\ Figure~\ref{fig:spurious_min}), existing algorithms~\citep{shan2014smoothing,geletu2017inner} still use SAA to solve the resulting smooth approximation. 
By employing a stochastic subgradient algorithm, we do not restrict ourselves to a single fixed batch of samples of the random variables, and can therefore hope to converge to truly locally optimal solutions of~\eqref{eqn:app}. 
%Another motivating factor is that theoretical bounds~\citep{campi2011sampling,luedtke2008sample} on the number of samples required by SAA-based approaches to ensure feasibility for~\eqref{eqn:ccp} are typically very conservative~\citep{henrion2011critical,
%luedtke2008sample,pagnoncelli2009sample,homem2014monte}, and their use results in suboptimal solutions and increased solution times (this issue is alleviated through the use of `tuned SAA' and carefully designed solution approaches, see Section~\ref{sec:computexp}).
Our proposal leverages well-known advantages of stochastic subgradient-type approaches, including a low per-iteration cost, low memory requirement, good scaling with number of random variables and the number of joint chance constraints, and reasonable scaling with the number of decision variables (cf.~the discussion in~\citet{nemirovski2009robust}).

%An alternative to approximating the solution of~\eqref{eqn:sp} using~\eqref{eqn:app} is to solve Problem~\eqref{eqn:sp} directly using zeroth-order smoothing-based stochastic optimization approaches, see~\citet{ermoliev1997nonsmooth},~\citet[Section~5]{ghadimi2016mini}, and~\citet{jin2018minimizing}, for instance. A potential disadvantage of such approaches is that they may require taking a relatively large sample (mini-batch) of $\xi$ at each iteration to make reasonable progress, since they ignore the structure of the objective function of~\eqref{eqn:sp} altogether.
%As an aside, we note that our proposal can also be used for obtaining near-stationary solutions of classes of reliability maximization problems~\citep{prekopa1979optimal,royset2004reliability,jasour2015semidefinite,norkin1993analysis} since such problems can be cast directly in the form~\eqref{eqn:sp}.

This article is organized as follows. 
Section~\ref{sec:existapp} reviews related approaches for solving chance-constrained NLPs.
Section~\ref{sec:smoothapp} establishes consistency of the smoothing approach in the limit of its parameter values, and
discusses some choices for smooth approximation of the step function.
Section~\ref{sec:propalgo} outlines our proposal for approximating the efficient frontier of~\eqref{eqn:ccp} and lists some practical considerations for its implementation.
Section~\ref{sec:computexp} summarizes implementation details and presents computational experiments on four test problems that illustrate the strength of our approach.
We close with a summary of our contributions and some avenues for future work in Section~\ref{sec:conclandfw}.
The appendices present auxiliary algorithms, omitted proofs, and computational results, and provide technical and implementation-related details for problems with recourse structure.

\textbf{Notation}.~We use $e$ to denote a vector of ones of suitable dimension, $\1{\cdot}$ to denote the l.s.c.~step function, $\xv := \Set{x \in X}{f(x) \leq \nu}$, $p(x) := \prob{g(x,\xi) \not\leq 0}$, $B_{\delta}(x) := \Set{z \in \R^n}{\norm{z-x} < \delta}$, where $\norm{\cdot}$ denotes the Euclidean norm, and `a.e.' for the quantifier `almost everywhere' with respect to the probability measure $\Pb$. 
Given functions $h: \R^n \times \R^d \to \R^m$ and $\phi_j:\R \to \R$, $j = 1,\cdots,m$, we write
$\max\left[h(x,\xi)\right]$ to denote $\max\{ h_j(x,\xi) : j\in\{1,\ldots, m\}\}$ and $\phi\left(h(x,\xi)\right)$ to denote the element-wise composition $(\phi_1(h_1(x,\xi)),\cdots,\phi_m(h_m(x,\xi)))$.
Given a set $Z \subset \R^n$, we let $\conv\{Z\}$ denote its convex hull, $I_Z(\cdot)$ denote its indicator function, i.e., $I_Z(z) = +\infty$ if $z \not\in Z$, and zero otherwise, $N_Z(z)$ denote its normal cone at a vector $z \in \R^n$ and $\proj{z}{Z}$ to denote the projection of $z$ onto $Z$ (we abuse notation to write $y = \proj{z}{Z}$ when $Z$ is a nonempty closed convex set), and $\abs{Z}$ denote its cardinality when it is a finite set.
Given a twice differentiable function $f: \R \to \R$, we write $d f(x)$ and $d^2 f(x)$ to denote its first and second derivatives at $x \in \R$.
Given a locally Lipschitz continuous function $f: \R^n \to \R$, we write $\partial f(x)$ to denote its Clarke generalized gradient at a point $x \in \R^n$~\citep{clarke1990optimization} and $\partial_B f(x)$ to denote its $B$-subdifferential (note: $\partial f(x) = \conv{\left\lbrace\partial_B f(x)\right\rbrace}$).
%All expectations are taken with respect to the random variable $\xi$. 
We assume measurability of random functions throughout.

\section{Review of related work}
\label{sec:existapp}

We restrict our attention in this brief review to algorithms 
%for solving chance-constrained programs 
that attempt to generate good-quality \textit{feasible} solutions to chance-constrained \textit{nonlinear} programs.

In the \textit{scenario approximation} approach, a fixed sample $\{\xi^i\}_{i=1}^{N}$ of the random variables from the
distribution $\P$ is taken and the sampled constraints $g(x,\xi^i) \leq 0$, $\forall i \in \{1,\cdots,N\}$, are enforced
in lieu of the chance constraint $\prob{g(x,\xi) \leq 0} \geq 1 - \alpha$. The advantage of this approach is that the scenario problem is a standard NLP that can be solved using off-the-shelf solvers. 
Calafiore, Campi, and Garatti~\citep{calafiore2005uncertain,campi2011sampling} present upper bounds on the sample
size~$N$ required to ensure that the solution of the scenario problem is \textit{feasible} for~\eqref{eqn:ccp} with a
given probability when the functions $g(\cdot,\xi)$ are convex for each fixed $\xi \in \Xi$.
When the functions $g$ do not possess the above convexity property but instead satisfy alternative Lipschitz assumptions,~\citet[Section~2.2.3]{luedtke2008sample} determine an upper bound on~$N$ that provides similar theoretical guarantees for a slightly perturbed version of the scenario problem.
A major limitation of scenario approximation is that it lacks strong guarantees on solution quality~\citep{luedtke2008sample,rengarajan2009estimating}.
A related approach for obtaining feasible solutions to~\eqref{eqn:ccp} constructs convex approximations of the feasible region using conservative convex approximations of the step function~\citep{nemirovski2006convex,rockafellar2000optimization,chen2010cvar}. Such approximations again may result in overly conservative solutions, e.g., see~\citet{hong2011sequential} and~\citet{cao2018sigmoidal}.

There has been a recent surge of interest in iterative smoothing-based NLP approaches in which the chance constraint
$p(x) \leq \alpha$ is replaced by a convergent sequence of smooth approximations.
To illustrate these approaches, consider first the case when we only have an individual chance constraint (i.e., $m =
1$), and suppose $p(x) = \bexpect{\1{g(x,\xi)}} \leq \alpha$ is approximated by $\bexpect{\phi_k\left(g(x,\xi)\right)}
\leq \alpha$, where $\{\phi_k\}$ is a monotonic sequence of smooth approximations converging to the step function.
If each element of $\{\phi_k\}$ is chosen to overestimate the step function, then solving the sequence of smooth
approximations furnishes feasible solutions to~\eqref{eqn:ccp} of improving quality. 
%Due to the general difficulty in
%evaluating the expectation, %(the algorithm of~\citet{lan2016algorithms} can be used to solve problems with convex expectation constraints), 
%the smooth approximation is typically applied to a scenario-based approximation.
Due to the difficulty in
evaluating high-dimensional expectations in general, the smooth approximation is typically applied to a scenario-based approximation.
Joint chance constraints have been accommodated either by replacing the $\max$ function in $\bexpect{\max\left[\phi_k\left(g(x,\xi)\right)\right]}$ with its own convergent sequence of smooth approximations~\citep{hu2013smooth,shan2016convergence}, or by conservatively approximating~\eqref{eqn:ccp} using a Bonferroni-type approximation~\citep{nemirovski2006convex}.

Lepp (see~\citep{lepp2009} and the references therein) proposes a stochastic version of a modified Lagrangian algorithm for solving progressively refined smooth approximations of individual chance-constrained NLPs with a differentiable probability function~$p$, and establishes almost sure convergence to stationary solutions of~\eqref{eqn:ccp}. A key difference between our proposal and Lepp's~\citep{lepp2009} is that the latter requires a diverging sequence of mini-batch sizes to ensure convergence.
\citet{andrieu2007stochastic} propose smoothing and finite difference-based stochastic Arrow-Hurwicz algorithms for
individual chance-constrained NLPs with continuous random variables, and establish local convergence to stationary
solutions under relatively strong assumptions.

\citet{hong2011sequential} propose a sequence of conservative nonsmooth difference-of-convex (DC) approximations of the step function and use it to solve joint chance-constrained convex programs. They establish convergence of any stationary/optimal solution of the DC approximation scheme to the set of stationary/optimal solutions of~\eqref{eqn:ccp} in the limit under certain assumptions.
\citet{hu2013smooth} build on this work by developing a sequence of conservative smooth approximations of joint chance-constrained convex programs (using log-sum-exp approximations of the $\max$ function), and use a sequential convex approximation algorithm to solve their smooth approximation of a SAA of~\eqref{eqn:ccp}.
\citet{shan2014smoothing,shan2016convergence} develop a family of conservative smooth DC approximations of the step function, also employ conservative log-sum-exp approximations of the $\max$ function to construct smooth approximations of~\eqref{eqn:ccp}, and provide similar theoretical guarantees as~\citet{hong2011sequential} under slightly weaker assumptions.
\citet{geletu2017inner} propose a framework for conservative analytic approximations of the step function, provide
similar theoretical guarantees as these works, and illustrate the applicability of their framework for individual chance-constrained NLPs using a class of sigmoid-like smoothing functions. They also propose smooth outer-approximations of~\eqref{eqn:ccp} for generating lower bounds.
%They establish sufficient conditions for the probability function $p$ to be differentiable, and additionally propose smooth lower bounding approximations of~\eqref{eqn:ccp}.
Adam et al.~\citep{adam2016nonlinear,adam2018solving} develop a continuous relaxation of SAAs of~\eqref{eqn:ccp}, and propose to use smooth approximations of the step function that are borrowed from the literature on mathematical programs with complementarity constraints along with Benders' decomposition~\citep{benders1962partitioning} to determine stationary points.
\citet{cao2018sigmoidal} propose a nonsmooth sigmoidal approximation of the step function, and use it to solve a smooth approximation of a SAA of individual chance-constrained NLPs. 
%They propose to initialize their algorithm using the solution of a scenario-based approximation of a conservative convex optimization problem~\citep{rockafellar2000optimization}. 
They propose to initialize their algorithm using the solution of a SAA of a conservative approximation of~\eqref{eqn:ccp} obtained by replacing the step function with a convex overestimate~\citep{rockafellar2000optimization}. 
Finally,~\citet{plw2019} also consider using a smooth
approximation on an SAA of \eqref{eqn:ccp}, analyze the joint convergence with respect to sample size and sequence of
smooth approximations, propose to solve a quantile-based approximation, and devise a trust-region method to solve 
the approximation when applied to a joint chance constraint.

An alternative to smoothing the chance constraint is to solve a SAA problem directly using mixed-integer NLP techniques,
which may be computationally challenging, especially when the constraint functions $g$ are not convex with respect to the decisions $x$.
Several tailored approaches~\citep{luedtke2014branch,van2016inexact}
%,xie2017quantile} 
have been proposed for solving SAAs of chance-constrained convex programs.
\citet{curtis2018sequential} attempt to directly solve a SAA of~\eqref{eqn:ccp} using NLP techniques. They develop an exact penalty function for the SAA, and propose a trust region algorithm that solves quadratic programs with linear cardinality constraints to converge to stationary points of~\eqref{eqn:ccp}.

Another important body of work by~\citet{van2014gradient,van2017sub} establishes (sub)gradient formulae for the
probability function $p$ when the function $g$ possesses special structure and $\P$ is a Gaussian or Gaussian-like
distribution. These approaches employ internal sampling to numerically evaluate (sub)gradients, and can be used within a
NLP framework for computing stationary point solutions to~\eqref{eqn:ccp}.
When Problem~\eqref{eqn:ccp} possesses certain special structures, NLP approaches based on `$p$-efficient points'~\citep{dentcheva2013regularization,van2017probabilistic} can also be employed for its solution.

The recent independent work of~\citet{adam2018machine} also considers a stochastic approximation method for chance-constrained NLPs. They consider the case when the random vector $\xi$ has a discrete distribution with finite (but potentially large) support, rewrite the chance constraint in~\eqref{eqn:ccp} using a quantile-based constraint, 
develop a penalty-based approach to transfer this constraint to the objective, and employ a mini-batch projected stochastic subgradient method to determine an approximately stationary solution to~\eqref{eqn:ccp} for a given risk level.
This work does not include a analysis that shows the method converges to a stationary solution.

%The major distinguishing feature of our proposal from (most of) the above works is that it does not solve a SAA of~\eqref{eqn:ccp} that relies on a fixed sample of the random variables, but instead attempts to solve~\eqref{eqn:sp} using a stochastic approximation algorithm that uses implicit sampling.
%Consequently, our proposed algorithm typically has a low per-iteration computational cost that is well-suited for large-scale instances, and can hope to converge to local optimal solutions of~\eqref{eqn:sp}. 
%This is demonstrated by the numerical experiments in Section~\ref{sec:computexp} which indicate that our approach can find good-quality approximations of the efficient frontier in reasonable solution times.
%Our proposed algorithm is unfortunately not entirely immune to drawbacks; some of them are listed at the end of Section~\ref{sec:propalgo} after introducing it formally.
%providing relevant background information.

\section{The smoothing approach}
\label{sec:smoothapp}

In the first part of this section, we establish conditions under which global/stationary solutions of convergent sequences of
smooth approximations~\eqref{eqn:app} converge to a global/stationary solution of~\eqref{eqn:sp}. We then present some
concrete examples of sequences of smooth approximations that can be accommodated within this framework. 

\subsection{Consistency}
\label{subsec:smoothcon}

We establish sufficient conditions under which solving a sequence of smooth approximations~\eqref{eqn:app} of the stochastic program~\eqref{eqn:sp} yields a point on the efficient frontier of~\eqref{eqn:ccp}.
The techniques in this section follow closely those in previous work that analyzed similar convergence for
\eqref{eqn:ccp} (cf.~\citet{hong2011sequential,shan2014smoothing,shan2016convergence,
hu2013smooth}, and~\citet{geletu2017inner}). We use the following assumptions on Problem~\eqref{eqn:sp}  in our
analysis. Not all results we present require all these assumptions; the required assumptions are included in the statements of our results.

\begin{assumption}
\label{ass:xcompact} 
The convex set $X_{\nu} := \Set{x \in X}{f(x) \leq \nu}$ is nonempty and compact.
\end{assumption}

\begin{assumption}
\label{ass:probzero} 
For each $x \in \xv$, $\prob{\max\left[g(x,\xi)\right] = 0} = 0$.
\end{assumption}

\begin{assumption}
\label{ass:confunc}
\setcounter{mycounter}{\value{assumption}}
The following conditions on the functions $g_j$ hold for each $\xi \in \Xi$ and $j \in \{1,\cdots,m\}$:
\begin{enumerate}[label=\Alph*.,itemsep=0em]
\item The function $g_j(\cdot,\xi)$ is Lipschitz continuous on $\xv$ with a nonnegative measurable Lipschitz constant
$L_{g,j}(\xi)$ satisfying $\bexpect{L^2_{g,j}(\xi)} < +\infty$. %\label{ass:conlipschitz}  
\item The gradient function $\nabla g_j(\cdot,\xi)$ is Lipschitz continuous on $\xv$ with a measurable Lipschitz
constant $L^{'}_{g,j}(\xi) \in \R_+$ satisfying $\bexpect{L^{'}_{g,j}(\xi)} < +\infty$.  %\label{ass:congradlipschitz}  
\item  There exist positive constants $\sigma_{g,j}$ satisfying $\bexpect{\norm{\nabla_x g_j(\bar{x},\xi)}^2} \leq
\sigma^2_{g,j}$, $\forall \bar{x} \in \xv$. %\label{ass:congradvar} 
\end{enumerate}
\end{assumption}

Assumption~\ref{ass:xcompact} is used to ensure that Problem~\eqref{eqn:sp} is well defined (see Lemma~\ref{lem:probconts}).
Assumption~\ref{ass:probzero} guarantees that the family of approximations of the function~$p$ considered converge
pointwise to it on $\xv$ in the limit of their parameter values (see Proposition~\ref{prop:pointwiseconv}). This
assumption, when enforced, essentially restricts $\xi$ to be a continuous random variable (see Lemma~\ref{lem:probconts} for a consequence of this assumption).
Assumption~\ref{ass:confunc} is a standard assumption required for the analysis of stochastic approximation algorithms that serves two purposes: it ensures that the approximations of the function $p$ possess important regularity properties (see Lemma~\ref{lem:problipconst}), and it enables the use of the projected stochastic subgradient algorithm for solving the sequence of approximations~\eqref{eqn:app}.
Note that Assumption~\ref{ass:probzero} is implied by Assumption~\ref{ass:phiandcdfass}B, which we introduce later, and
that Assumption~\ref{ass:confunc}A implies $\bexpect{L_{g,j}(\xi)} < +\infty$.
\begin{lemma}
\label{lem:probconts}
The probability function $p:\R^n \to [0,1]$ is lower semicontinuous. Under Assumption~\ref{ass:probzero}, we additionally have that $p$ is continuous on $\xv$.
\end{lemma}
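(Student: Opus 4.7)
The plan is to work with $h(x,\xi) := \max_{j=1,\ldots,m} g_j(x,\xi)$, so that $p(x) = \mathbb{E}[\mathds{1}\{h(x,\xi)\}]$. Since each $g_j$ is continuously differentiable (hence continuous) in $x$ and the max of finitely many continuous functions is continuous, $h(\cdot,\xi)$ is continuous on $\R^n$ for each $\xi \in \Xi$. This continuity, combined with pointwise properties of the step function $\1{\cdot}$, will drive both parts of the proof.

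For lower semicontinuity, I would first observe that $\1{\cdot}$ is itself lower semicontinuous on $\R$: the only possible point of discontinuity is $z=0$, where $\1{0} = 0$ and $\liminf_{z \to 0}\1{z} \geq 0$ trivially. Composing with the continuous map $x \mapsto h(x,\xi)$ shows that, for every fixed $\xi$, the map $x \mapsto \1{h(x,\xi)}$ is lower semicontinuous on $\R^n$. Given any sequence $x_k \to x$, Fatou's lemma then yields
\begin{equation*}
\liminf_{k\to\infty} p(x_k) = \liminf_{k\to\infty} \mathbb{E}\bigl[\1{h(x_k,\xi)}\bigr] \geq \mathbb{E}\Bigl[\liminf_{k\to\infty} \1{h(x_k,\xi)}\Bigr] \geq \mathbb{E}\bigl[\1{h(x,\xi)}\bigr] = p(x),
\end{equation*}
proving that $p$ is l.s.c.\ on all of $\R^n$. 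The nonnegativity of the integrand is what lets us invoke Fatou without any integrability hypothesis.

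For continuity on $\xv$ under Assumption~\ref{ass:probzero}, it suffices to upgrade the inequality above to an equality by establishing upper semicontinuity at each $x \in \xv$. The key observation is that $\1{\cdot}$ is continuous at every $z \neq 0$. Fix $x \in \xv$ and a sequence $x_k \to x$. For any $\xi$ with $h(x,\xi) \neq 0$, continuity of $h(\cdot,\xi)$ gives $h(x_k,\xi) \to h(x,\xi)$, and continuity of $\1{\cdot}$ at $h(x,\xi)$ gives $\1{h(x_k,\xi)} \to \1{h(x,\xi)}$. By Assumption~\ref{ass:probzero}, the set $\{\xi : h(x,\xi) = 0\}$ has $\Pb$-measure zero, so this pointwise convergence holds $\Pb$-a.e. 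Because $|\1{\cdot}| \leq 1$ is bounded by the integrable constant function $1$, the dominated convergence theorem yields $p(x_k) \to p(x)$, which establishes continuity at $x$.

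The main (minor) obstacle is being careful about the lower semicontinuity of $\1{\cdot}$ at $z=0$ in the first part, and in the second part making sure the null set where $h(x,\xi) = 0$ is controlled by the assumption at the limit point $x$ alone (which it is). No deeper machinery is needed, and both parts reduce to standard Fatou/dominated convergence arguments once the right continuity properties of $h$ and $\1{\cdot}$ are in place.
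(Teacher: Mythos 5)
Your proof is correct. The paper does not give its own argument for this lemma; it simply cites Theorem~10.1.1 of Pr\'ekopa, and what you have written is essentially a self-contained proof of that cited result: lower semicontinuity of $p$ via Fatou's lemma applied to the (nonnegative, l.s.c.) composition $\1{h(\cdot,\xi)}$ with $h = \max_j g_j$, and continuity at points of $\xv$ via dominated convergence once Assumption~\ref{ass:probzero} guarantees that the exceptional set $\{\xi : h(x,\xi)=0\}$, where the step function fails to be continuous, is $\Pb$-null. All the individual steps check out: $\1{\cdot}$ is the indicator of the open set $(0,+\infty)$ and hence l.s.c. (as the paper itself notes in its notation section), composition of an l.s.c. function with a continuous one is l.s.c., and Fatou needs no integrability hypothesis here because the integrand is nonnegative and bounded. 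One small remark: your argument in the second part allows the approximating sequence $x_k$ to range over all of $\R^n$, so you actually obtain continuity of $p$ at each point of $\xv$ as a function on $\R^n$, which is slightly stronger than continuity of the restriction $p|_{\xv}$ and certainly suffices for the lemma as stated.
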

\begin{proof}
Follows from Theorem~10.1.1 of~\citet{prekopa2013stochastic}.  \halmos
\end{proof}

A natural approach to approximating the solution of Problem~\eqref{eqn:sp}  is to construct a sequence of approximations~\eqref{eqn:app} based on an associated sequence of smooth approximations that converge to the step function (cf.~Section~\ref{sec:existapp}). 
In what follows, we use $\{\phi_k\}$ to denote a sequence of smooth approximations of the step function, where $\phi_k$ corresponds to a vector of approximating functions $(\phi_{k,1},\cdots,\phi_{k,m})$ for the $m$ constraints defined by the function~$g$.
Since we are mainly interested in employing sequences of approximations $\{\phi_k\}$ that depend on associated sequences of smoothing parameters $\{\tau_k\}$ (with $\phi_{k,j}$ `converging' to the step function $\1{\cdot}$ as the smoothing parameter $\tau_{k,j}$ converges to zero), we sometimes write $\phi(\cdot;\tau_{k,j})$, where $\phi:\R \times \R_+ \to \R$, instead of $\phi_{k,j}(\cdot)$ to make this parametric dependence explicit.
For ease of exposition, we make the following (mild) blanket assumptions on each element of the sequence of smoothing functions $\{\phi_k\}$ throughout this work. Section~\ref{subsec:exmsmoothapprox} lists some examples that satisfy these assumptions.
\begin{assumption}
\label{ass:phibasic}
\setcounter{mycounter}{\value{assumption}}
The following conditions hold for each $k \in \N$ and $j \in \{1,\cdots,m\}$:
\begin{enumerate}[label=\Alph*.,itemsep=0em]
\item  The functions $\phi_{k,j}:\R \to \R$ are continuously differentiable. %\label{ass:phismooth} 
\item  Each function $\phi_{k,j}$ is nondecreasing, i.e., $y \geq z \implies \phi_{k,j}(y) \geq \phi_{k,j}(z)$. %\label{ass:phiincreasing} 
\item The functions $\phi_{k,j}$ are equibounded, i.e., there exists a universal constant $M_{\phi} > 0$ (independent of
indices $j$ and $k$) such that $\abs{\phi_{k,j}(y)} \leq M_{\phi}$, $\forall y \in \R$. %\label{ass:phibounded}  
\item Each approximation $\phi_{k,j}(y)$ converges pointwise to the step function $\1{y}$ except possibly at $y = 0$, i.e.,
$\uset{k \to \infty}{\lim} \phi_{k,j}(y) = \1{y}, \: \forall y \in \R \backslash \{0\}$, or, equivalently, $\uset{k \to
\infty}{\lim} \phi(y;\tau_{k,j}) = \1{y}, \: \forall y \in \R \backslash \{0\}$.  %\label{ass:phipointconv}  
\end{enumerate}
\end{assumption}
We say that `the \textit{strong form} of Assumption~\ref{ass:phibasic}' holds if Assumption~\ref{ass:phibasic}D is replaced with the stronger condition of pointwise convergence \textit{everywhere}, i.e.,
$\uset{k \to \infty}{\lim} \phi_{k,j}(y) = \1{y}, \: \forall y \in \R$.
Note that a sequence of conservative smooth approximations of the step function (which overestimate the step function everywhere) cannot satisfy the strong form of Assumption~\ref{ass:phibasic}, whereas sequences of underestimating smooth approximations of the step function may satisfy it (see Section~\ref{subsec:exmsmoothapprox}).
In the rest of this paper, we use $\ph_k(x)$ to denote the approximation $\bexpect{\max\left[\phi_k\left(g(x,\xi)\right)\right]}$ (again, we sometimes write $\ph(x;\tau_k)$ instead of $\ph_k(x)$ to make its dependence on the smoothing parameters $\tau_k$ explicit). Note that $\ph_k:\R^n \to \R$ is continuous under Assumption~\ref{ass:phibasic}, see~\citet[Theorem~7.43]{shapiro2009lectures}.
The following result establishes sufficient conditions under which $\ph_k(x) \to p(x)$ pointwise on $\xv$. 
%which is a minimum requirement for the consistency of the smoothing approach in general.

\begin{proposition}
\label{prop:pointwiseconv}
Suppose Assumptions~\ref{ass:probzero} and~\ref{ass:phibasic} hold, or the strong form of Assumption~\ref{ass:phibasic}
holds. Then
$\uset{k \to \infty}{\lim}\: \ph_k(x) \equiv \uset{k \to \infty}{\lim}\: \ph(x;\tau_k) = p(x)$, $\forall x \in \xv$.
\end{proposition}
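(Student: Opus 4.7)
The plan is to invoke the dominated convergence theorem on the $\P$-integral defining $\ph_k(x)$. Fix $x \in \xv$. Each integrand $\xi \mapsto \max[\phi_k(g(x,\xi))]$ is uniformly bounded by the constant $M_\phi$ from Assumption~\ref{ass:phibasic}C, which is $\P$-integrable on $\Xi$. Since $\bexpect{\max[\1{g(x,\xi)}]} = \prob{g(x,\xi) \not\leq 0} = p(x)$, it suffices to establish $\P$-a.s.\ pointwise convergence of the integrand to $\max[\1{g(x,\xi)}]$.

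I would first dispose of the easier case in which the strong form of Assumption~\ref{ass:phibasic} holds. Then $\phi_{k,j}(y) \to \1{y}$ for every $y \in \R$ and every $j$, and continuity of the finite max on $\R^m$ yields $\max[\phi_k(g(x,\xi))] \to \max[\1{g(x,\xi)}]$ for every $\xi$; DCT closes the argument.

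In the general case (Assumptions~\ref{ass:probzero} and~\ref{ass:phibasic}), I would restrict attention to the $\P$-full-measure event $E := \{\xi \in \Xi : \max[g(x,\xi)] \neq 0\}$ and split on the sign of $\max_j g_j(x,\xi)$ on $E$. If $\max_j g_j(x,\xi) < 0$, every $g_j(x,\xi)$ is strictly negative, so Assumption~\ref{ass:phibasic}D gives $\phi_{k,j}(g_j(x,\xi)) \to 0$ for each $j$ and hence $\max[\phi_k(g(x,\xi))] \to 0$. If $\max_j g_j(x,\xi) > 0$, then any $j^*$ attaining the max has $g_{j^*}(x,\xi) > 0$, so $\phi_{k,j^*}(g_{j^*}(x,\xi)) \to 1$, forcing $\liminf_k \max[\phi_k(g(x,\xi))] \geq 1$. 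The matching upper bound $\limsup_k \max[\phi_k(g(x,\xi))] \leq 1$ is the only subtlety: for each $j$ with $g_j(x,\xi) > 0$ one has $\phi_{k,j}(g_j(x,\xi)) \to 1$ directly from Assumption~\ref{ass:phibasic}D, while for each $j$ with $g_j(x,\xi) \leq 0$ monotonicity (Assumption~\ref{ass:phibasic}B) gives $\phi_{k,j}(g_j(x,\xi)) \leq \phi_{k,j}(\epsilon)$ for any fixed $\epsilon > 0$, and $\phi_{k,j}(\epsilon) \to 1$; since $m$ is finite, this yields $\limsup_k \max[\phi_k(g(x,\xi))] \leq 1$, matching the lower bound.

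Combining the two subcases gives $\P$-a.s.\ convergence of the integrands to $\1{\max[g(x,\xi)]} = \max[\1{g(x,\xi)}]$, and DCT delivers $\ph_k(x) \to p(x)$. The main obstacle, though not deep, is the upper bound in the positive subcase when some (but not all) $g_j(x,\xi)$ equal $0$: Assumption~\ref{ass:phibasic}D says nothing about $\phi_{k,j}(0)$, so it is precisely the monotonicity in Assumption~\ref{ass:phibasic}B, combined with pointwise convergence at a positive perturbation, that closes the gap. Note that the stronger pointwise convergence hypothesis (the strong form) bypasses both Assumption~\ref{ass:probzero} and this perturbation argument entirely, which is why the two hypothesis sets appear as alternatives in the statement.
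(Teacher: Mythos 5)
Your proof is correct and follows essentially the same route as the paper's: dominate by $M_\phi$ from Assumption~\ref{ass:phibasic}C and apply Lebesgue's dominated convergence theorem after establishing a.e.\ pointwise convergence of $\max\left[\phi_k\left(g(x,\xi)\right)\right]$ to $\max\left[\1{g(x,\xi)}\right]$ on the full-measure event $\{\max\left[g(x,\xi)\right] \neq 0\}$ supplied by Assumption~\ref{ass:probzero}. The only difference is that you spell out the case analysis (in particular the monotonicity-plus-perturbation argument handling indices $j$ with $g_j(x,\xi)=0$ when the overall max is positive), which the paper leaves implicit here and only carries out explicitly later in the proof of Proposition~\ref{prop:epiconv1}.
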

\begin{proof}
Define $h_k:\R^n \times \R^d \to \R$ by $h_k(x,\xi) := \max\left[\phi_k\left(g(x,\xi)\right)\right]$ for each $k \in
\N$, and note that $h_k$ is continuous by virtue of Assumption~\ref{ass:phibasic}A. Additionally,
Assumption~\ref{ass:phibasic}C implies that $\abs{h_k(x,\xi)} \leq M_{\phi}$ for each $x \in \R^n, \xi \in \R^d$, and $k \in \N$. By noting that for each $x \in \xv$,
\[
\uset{k \to \infty}{\lim} h_k(x,\xi) = \max\left[\1{g(x,\xi)}\right]
\]
for a.e.~$\xi \in \Xi$ due to Assumptions~\ref{ass:probzero} and~\ref{ass:phibasic} (or just the strong form of
Assumption~\ref{ass:phibasic}), we obtain the stated result by Lebesgue's dominated convergence
theorem~\citep[Theorem~7.31]{shapiro2009lectures}.   \halmos
\end{proof}

The next two results show
that a global solution of~\eqref{eqn:sp} can be obtained by solving a sequence of approximations~\eqref{eqn:app} to
global optimality. To achieve this, we rely on the concept of epi-convergence of sequences of extended real-valued
functions (see~\citet[Chapter~7]{rockafellar2009variational} for a comprehensive introduction).

\begin{proposition}
\label{prop:epiconv1}
Suppose Assumptions~\ref{ass:probzero} and~\ref{ass:phibasic} hold.
Then, the sequence of functions $\left\lbrace \ph_k(\cdot) + I_{\xv}(\cdot) \right\rbrace_k$ epi-converges to $p(\cdot) + I_{\xv}(\cdot)$ for each $\nu \in \R$.
\end{proposition}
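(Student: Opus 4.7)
The plan is to verify the two conditions that characterize epi-convergence of $\{\ph_k + I_{\xv}\}$ to $p + I_{\xv}$ at each $x \in \R^n$: for every sequence $x_k \to x$ one needs
\[
\liminf_{k \to \infty} \bigl(\ph_k(x_k) + I_{\xv}(x_k)\bigr) \geq p(x) + I_{\xv}(x),
\]
and there must exist a ``recovery'' sequence $x_k \to x$ along which the analogous $\limsup$ is bounded above by $p(x) + I_{\xv}(x)$. Since $\xv$ is closed (as a sublevel set of the continuous function $f$), at any $x \notin \xv$ the right-hand side equals $+\infty$ and both conditions hold trivially. At any $x \in \xv$ the constant sequence $x_k \equiv x$ serves as a recovery sequence, because Proposition~\ref{prop:pointwiseconv} already yields $\ph_k(x) \to p(x)$.

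The substantive step is the liminf inequality at $x \in \xv$. I would first reduce to sequences $x_k \to x$ that remain in $\xv$, since otherwise the indicator terms send the liminf to $+\infty$ already. It then suffices to prove $\liminf_k \ph_k(x_k) \geq p(x)$. Since each integrand $\max[\phi_k(g(x_k,\xi))]$ is bounded below by $-M_\phi$ by Assumption~\ref{ass:phibasic}C, Fatou's lemma yields
\[
\liminf_{k \to \infty} \ph_k(x_k) \;\geq\; \bexpect{\,\liminf_{k \to \infty} \max[\phi_k(g(x_k,\xi))]\,},
\]
and the task reduces to the pointwise inequality $\liminf_k \max[\phi_k(g(x_k,\xi))] \geq \max[\1{g(x,\xi)}]$ for a.e.~$\xi$.

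For this pointwise claim I would invoke Assumption~\ref{ass:probzero} to discard the null set on which $\max[g(x,\xi)] = 0$, and split the remainder into two cases. If $\max[g(x,\xi)] < 0$, every $g_j(x,\xi)$ is strictly negative, and continuity of $g$ together with the monotonicity of $\phi_{k,j}$ (Assumption~\ref{ass:phibasic}B) and pointwise convergence at nonzero arguments (Assumption~\ref{ass:phibasic}D) squeezes each $\phi_{k,j}(g_j(x_k,\xi))$ to $0$, matching the target value $0$. If $\max[g(x,\xi)] > 0$, choose some $j^*$ with $g_{j^*}(x,\xi) > 0$; for sufficiently small $\epsilon > 0$ the interval $[g_{j^*}(x,\xi) - \epsilon,\, g_{j^*}(x,\xi) + \epsilon]$ lies in $(0,\infty)$ and eventually contains $g_{j^*}(x_k,\xi)$, so monotonicity wedges $\phi_{k,j^*}(g_{j^*}(x_k,\xi))$ between two sequences that converge to $1$. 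Because the max in the integrand dominates this single entry, its liminf is at least $1 = \max[\1{g(x,\xi)}]$. The main obstacle is precisely this pointwise step: Assumption~\ref{ass:phibasic}D provides no control at argument $0$, so one cannot naively chain pointwise convergence of $\phi_{k,j}$ with convergence of its argument, and the monotonicity-based squeeze, enabled by the almost-sure separation of $\max[g(x,\xi)]$ from zero granted by Assumption~\ref{ass:probzero}, is what makes the argument go through.
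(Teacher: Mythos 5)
Your proposal is correct and follows essentially the same route as the paper's proof: the same liminf/recovery-sequence characterization of epi-convergence, the constant sequence plus Proposition~\ref{prop:pointwiseconv} for the upper bound, Fatou's lemma (justified by the uniform bound $M_\phi$ from Assumption~\ref{ass:phibasic}C), and the same monotonicity-based squeeze in the pointwise case analysis, with Assumption~\ref{ass:probzero} discarding the null set where $\max\left[g(x,\xi)\right]=0$. No gaps.
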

\begin{proof}
Define $h_k:\R^n \to \R$ and $h:\R^n \to \R$ by $h_k(x) := \ph_k(x) + I_{\xv}(x)$ and $h(x) := p(x) + I_{\xv}(x)$, respectively. From Proposition~7.2 of~\citet{rockafellar2009variational}, we have that $\{h_k\}$ epi-converges to $h$ if and only if at each $x \in \R^n$
\begin{align*}
\liminf_{k \to \infty} h_k(x_k) \geq h(x), & \:\: \text{for \textit{every} sequence } \{x_k\} \to x, \text{ and} \\
\limsup_{k \to \infty} h_k(x_k) \leq h(x), & \:\: \text{for \textit{some} sequence } \{x_k\} \to x.
\end{align*}
Consider the constant sequence with $x_k = x$ for each $k \in \N$. We have
\[
\limsup_{k \to \infty} h_k(x_k) = I_{\xv}(x) + \limsup_{k \to \infty} \ph_k(x) = I_{\xv}(x) + p(x) = h(x)
\]
as a result of Proposition~\ref{prop:pointwiseconv}, which establishes the latter inequality.

To see the former inequality, consider an arbitrary sequence $\{x_k\}$ in $\R^n$ converging to $x \in \R^n$. By noting that the indicator function $I_{\xv}(\cdot)$ is lower semicontinuous (since $\xv$ is closed) and
\[
\liminf_{k \to \infty} h_k(x_k) \geq \liminf_{k \to \infty} \ph_k(x_k) + \liminf_{k \to \infty} I_{\xv}(x_k),
\]
it suffices to show $\liminf_k \ph_k(x_k) \geq p(x)$ to establish $\{h_k\}$ epi-converges to $h$. In fact, it suffices to show the above inequality holds for any $x \in \xv$ since the former inequality holds trivially for $x \not\in \xv$.

Define $q_k:\R^d \to \R$ by $q_k(\xi) := \max\left[\phi_k\left(g(x_k,\xi)\right)\right]$, and note that $q_k$ is $\Pb$-integrable by virtue of Assumption~\ref{ass:phibasic}. By Fatou's lemma~\citep[Theorem~7.30]{shapiro2009lectures}, we have that \[
\liminf_k \bexpect{q_k(\xi)} = \liminf_k \ph_k(x_k) \geq \bexpect{q(\xi)},
\]
where $q:\R^d \to \R$ is defined as 
\[
q(\xi) := \liminf_k q_k(\xi) = \liminf_k \max\left[\phi_k\left(g(x_k,\xi)\right)\right].
\]
Therefore, it suffices to show that $q(\xi) \geq \max\left[\1{g(x,\xi)}\right]$ a.e.~for each $x \in \xv$ to establish
the former inequality.
(Note that this result holds with Assumption~\ref{ass:probzero} if the strong form of Assumption~\ref{ass:phibasic} is
made.)

Fix $\xi \in \Xi$. Let $l \in \{1,\cdots,m\}$ denote an index at which the maximum in $\max\left[\1{g(x,\xi)}\right]$ is attained, i.e., $\max\left[\1{g(x,\xi)}\right] = \1{g_l(x,\xi)}$. Consider first the case when $g_l(x,\xi) > 0$. By the continuity of the functions $g$, for any $\varepsilon > 0$ there exists $N_{\varepsilon} \in \N$ such that $j \geq N_{\varepsilon} \implies g(x_j,\xi) > g(x,\xi) - \varepsilon e$. Therefore
\[
q(\xi) = \liminf_k \max\left[\phi_k\left(g(x_k,\xi)\right)\right] \geq \liminf_k \max\left[\phi_k\left(g(x,\xi) - \varepsilon e\right)\right] \geq \liminf_k \phi_{k,l}\left(g_l(x,\xi) - \varepsilon\right),
\]
where the first inequality follows from Assumption~\ref{ass:phibasic}B. Choosing $\varepsilon < g_l(x,\xi)$ yields
\[
q(\xi) = \liminf_k \max\left[\phi_k\left(g(x_k,\xi)\right)\right] \geq \liminf_k \phi_{k,l}\left(g_l(x,\xi) - \varepsilon\right) = 1 = \max\left[\1{g(x,\xi)}\right]
\]
by virtue of Assumption~\ref{ass:phibasic}D.
The case when $g_l(x,\xi) < 0$ follows more directly since
\[
q(\xi) = \liminf_k \max\left[\phi_k\left(g(x_k,\xi)\right)\right] \geq \liminf_k \phi_{k,l}\left(g_l(x,\xi) - \varepsilon\right) = \1{g_l(x,\xi) - \varepsilon} = 0 = \max\left[\1{g(x,\xi)}\right],
\]
where the second equality follows from Assumption~\ref{ass:phibasic}. Since $\max\left[g(x,\xi)\right] \neq 0$
a.e.~$\forall x \in \xv$ by Assumption~\ref{ass:probzero}, we have that $q(\xi) \geq \max\left[\1{g(x,\xi)}\right]$
a.e.~for each $x \in \xv$, which concludes the proof.  \halmos
\end{proof}

A consequence of the above proposition is the following key result (cf.~\citet[Theorem~2]{hong2011sequential},~\citet[Theorem~4.1]{shan2014smoothing},~\citet[Corollary~3.7]{geletu2017inner}, and~\citet[Theorem~5]{cao2018sigmoidal}), which establishes convergence of the optimal solutions and objective values of the sequence of approximating problems~\eqref{eqn:app} to those of the true problem~\eqref{eqn:sp}.

\begin{theorem}
\label{thm:convinmin}
Suppose Assumptions~\ref{ass:xcompact},~\ref{ass:probzero}, and~\ref{ass:phibasic} hold. Then
\[
\uset{k \to \infty}{\lim}\: \uset{x \in \xv}{\min}\: \ph(x;\tau_k) = \uset{x \in \xv}{\min}\: p(x)
\quad \text{and} \quad
\uset{k \to \infty}{\limsup} \: \uset{x \in \xv}{\argmin} \: \ph(x;\tau_k) \subset \uset{x \in \xv}{\argmin} \: p(x).
\]
\end{theorem}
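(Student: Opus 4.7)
The plan is to combine the epi-convergence established in Proposition~\ref{prop:epiconv1} with the standard fact that epi-convergence, together with a tightness/level-boundedness condition, implies convergence of minimum values and upper semicontinuity of the $\argmin$ correspondence. Specifically, I would appeal to Theorem~7.33 of~\citet{rockafellar2009variational}, which asserts exactly these two conclusions for an epi-convergent sequence of proper l.s.c.\ functions provided the sequence is eventually level-bounded.

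First, I would introduce the auxiliary functions $h_k(x) := \ph_k(x) + I_{\xv}(x)$ and $h(x) := p(x) + I_{\xv}(x)$ and recall from Proposition~\ref{prop:epiconv1} that $\{h_k\}$ epi-converges to $h$. The hypotheses of Proposition~\ref{prop:epiconv1} (Assumptions~\ref{ass:probzero} and~\ref{ass:phibasic}) are precisely those assumed here, so this is immediate. I would then observe that each $\ph_k$ is continuous on $\R^n$ (noted just before Proposition~\ref{prop:pointwiseconv}), that $p$ is lower semicontinuous (Lemma~\ref{lem:probconts}), and that $I_{\xv}$ is l.s.c.\ because $\xv$ is closed; hence $h_k$ and $h$ are proper l.s.c.\ functions whose effective domains are contained in the compact set $\xv$ (Assumption~\ref{ass:xcompact}). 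In particular, the infima of $h_k$ and $h$ are attained.

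Next, I would verify the level-boundedness hypothesis of Theorem~7.33 of~\citet{rockafellar2009variational}. Because $\dom h_k \subseteq \xv$ for all $k$ and $\xv$ is compact, every sublevel set $\{x : h_k(x) \leq \alpha\}$ is bounded, uniformly in $k$; this is exactly the required ``eventually level-bounded'' condition. Applying the cited theorem then yields both $\min h_k \to \min h$ and $\limsup_{k\to\infty}\,\argmin h_k \subset \argmin h$. Translating back through the definitions of $h_k$ and $h$ gives $\min_{x\in\xv}\ph(x;\tau_k) \to \min_{x\in\xv} p(x)$ and $\limsup_{k\to\infty}\,\argmin_{x\in\xv}\ph(x;\tau_k) \subset \argmin_{x\in\xv} p(x)$, as claimed.

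There is no genuine obstacle here: the heavy lifting was done in Proposition~\ref{prop:epiconv1}, and the present argument is a routine invocation of an off-the-shelf epi-convergence theorem once compactness of $\xv$ and the boundedness provided by Assumption~\ref{ass:phibasic}C are noted. The only care needed is in confirming the proper/l.s.c./level-bounded hypotheses of Theorem~7.33 of~\citet{rockafellar2009variational}, all of which follow directly from the standing assumptions.
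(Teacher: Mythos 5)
Your proposal is correct and follows exactly the paper's own route: the paper proves this theorem in one line by citing Proposition~\ref{prop:epiconv1} together with Theorem~7.33 of \citet{rockafellar2009variational}, and your verification of the properness, lower semicontinuity, and eventual level-boundedness hypotheses (via compactness of $\xv$ from Assumption~\ref{ass:xcompact} and the equiboundedness in Assumption~\ref{ass:phibasic}C) is precisely the routine checking the paper leaves implicit.
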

\begin{proof}
Follows from Proposition~\ref{prop:epiconv1} and Theorem~7.33 of~\citet{rockafellar2009variational}.  \halmos
\end{proof}

Theorem \ref{thm:convinmin} has a couple of practical limitations. First, it is not applicable to situations where
$\xi$ is a discrete random variable since it relies crucially on Assumption~\ref{ass:probzero}. Second, it only
establishes that any accumulation point of a sequence of \textit{global} minimizers of~\eqref{eqn:app} is a
\textit{global} minimizer of~\eqref{eqn:sp}. Since~\eqref{eqn:app} involves minimizing a nonsmooth nonconvex
expected-value function, obtaining a global minimizer of \eqref{eqn:app} is itself challenging. The next result circumvents the first limitation
when the smoothing function is chosen judiciously (cf.~the outer-approximations of~\citet{geletu2017inner} and
Section~\ref{subsec:exmsmoothapprox}). Proposition~\ref{prop:strictminapprox} shows that strict local minimizers
of~\eqref{eqn:sp} can be approximated using sequences of local minimizers of~\eqref{eqn:app}. While this doesn't fully
address the second limitation, it provides some hope that strict local minimizers of~\eqref{eqn:sp} may be approximated
by solving a sequence of approximations~\eqref{eqn:app} to local optimality.
Theorem~\ref{thm:convofstatpts} establishes that accumulation points of sequences of stationary solutions to the approximations~\eqref{eqn:app} yield stationary solutions to~\eqref{eqn:sp} under additional assumptions.

\begin{proposition}
\label{prop:epiconv2}
Suppose the approximations $\{\phi_k\}$ satisfy the strong form of Assumption~\ref{ass:phibasic}.
Then the sequence of functions $\left\lbrace\ph_k(\cdot) + I_{\xv}(\cdot)\right\rbrace_k$ epi-converges to $p(\cdot) + I_{\xv}(\cdot)$.
Moreover, the conclusions of Theorem~\ref{thm:convinmin} hold if we further make Assumption~\ref{ass:xcompact}.
\end{proposition}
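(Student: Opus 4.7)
The plan is to mirror the proof of Proposition~\ref{prop:epiconv1} essentially verbatim, with the strong form of Assumption~\ref{ass:phibasic} taking over the role that Assumption~\ref{ass:probzero} plays there. Setting $h_k := \ph_k + I_{\xv}$ and $h := p + I_{\xv}$, I would verify the two Rockafellar--Wets characterizations of epi-convergence: $\limsup_k h_k(x_k) \leq h(x)$ for some $x_k \to x$, and $\liminf_k h_k(x_k) \geq h(x)$ for every $x_k \to x$.

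For the limsup inequality I would take the constant sequence $x_k \equiv x$ and invoke Proposition~\ref{prop:pointwiseconv}, which already lists the strong form of Assumption~\ref{ass:phibasic} among its sufficient hypotheses. This is the one place where the strong form is genuinely needed: without Assumption~\ref{ass:probzero}, the event $\max[g(x,\xi)] = 0$ may carry positive probability, and on it the integrand $\max[\phi_k(g(x,\xi))]$ can fail to converge to $\max[\1{g(x,\xi)}] = 0$ unless $\phi_{k,j}(0) \to \1{0} = 0$, which is precisely what pointwise convergence at zero supplies.

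For the liminf inequality the argument is identical to that of Proposition~\ref{prop:epiconv1}. The case $x \notin \xv$ is trivial because closedness of $\xv$ forces $x_k \notin \xv$, and hence $h_k(x_k) = +\infty$, eventually. For $x \in \xv$, I would apply Fatou's lemma---justified by the uniform bound $|\phi_{k,j}| \leq M_\phi$ from Assumption~\ref{ass:phibasic}C---to reduce the claim to the pointwise inequality $q(\xi) := \liminf_k \max[\phi_k(g(x_k,\xi))] \geq \max[\1{g(x,\xi)}]$. Splitting into the cases $\max[g(x,\xi)] > 0$ (pick the argmax index $l$ and $\varepsilon \in (0, g_l(x,\xi))$) and $\max[g(x,\xi)] \leq 0$ (pick any index $l$ and any $\varepsilon > 0$), the chain of continuity of $g_l$, monotonicity of $\phi_{k,l}$, and pointwise convergence of $\phi_{k,l}$ at the nonzero shifted point $g_l(x,\xi) - \varepsilon$ yields the required bound.

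Once epi-convergence is established, the second assertion of the proposition follows exactly as in Theorem~\ref{thm:convinmin}: Assumption~\ref{ass:xcompact} delivers compactness of $\xv$, and Theorem~7.33 of~\citet{rockafellar2009variational} converts epi-convergence into convergence of optimal values and outer semicontinuous inclusion of argmin sets. I do not anticipate any real obstacle, since the main conceptual step is recognizing that the strong form enters only through the limsup bound and that otherwise Proposition~\ref{prop:epiconv1}'s reasoning carries over essentially verbatim.
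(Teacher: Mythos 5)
Your proposal is correct and matches the paper's proof, which simply observes that Assumption~\ref{ass:probzero} is no longer needed in the argument of Proposition~\ref{prop:epiconv1} once the strong form of Assumption~\ref{ass:phibasic} is assumed; you spell out the details, including the right way to absorb the boundary case $\max\left[g(x,\xi)\right]=0$ into the ``$\leq 0$'' branch of the liminf argument rather than excluding it by a measure-zero argument. Your observation that the strong form is only genuinely invoked through Proposition~\ref{prop:pointwiseconv} in the limsup bound, while the liminf bound works with the weak form because the pointwise limits are only evaluated at the shifted points $g_l(x,\xi)-\varepsilon\neq 0$, is accurate and consistent with the paper.
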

\begin{proof}
Follows by noting that Assumption~\ref{ass:probzero} is no longer required for the proof of
Proposition~\ref{prop:epiconv1} when the strong form of Assumption~\ref{ass:phibasic} is made.  \halmos
\end{proof}

In fact, the proof of Proposition~\ref{prop:epiconv2} becomes straightforward if we impose the additional (mild) condition (assumed by~\citet{geletu2017inner}) that the sequence of smooth approximations $\{\phi_k\}$ is monotone nondecreasing, i.e.,
$\phi_{k+1,j}(y) \geq \phi_{k,j}(y)$, $\forall y \in \R, k \in \N$, and $j \in \{1,\cdots,m\}$.
Under this additional assumption, the conclusions of Proposition~\ref{prop:epiconv2} follow from Proposition~7.4(d) and Theorem~7.33 of~\citet{rockafellar2009variational}.

The next result, in the spirit of Proposition~3.9 of~\citet{geletu2017inner}, shows that strict local minimizers of~\eqref{eqn:sp} can be approximated using local minimizers of~\eqref{eqn:app}.

\begin{proposition}
\label{prop:strictminapprox}
Suppose the assumptions of Theorem~\ref{thm:convinmin} (or those of Proposition~\ref{prop:epiconv2}) hold. If $x^*$ is a strict local minimizer of~\eqref{eqn:sp}, then there exists a sequence of local minimizers $\{\hat{x}_k\}$ of $\uset{x \in \xv}{\min}\: \ph_k(x)$ with $\hat{x}_k \to x^*$.
\end{proposition}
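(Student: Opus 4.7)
The plan is to localize the epi-convergence argument from Proposition~\ref{prop:epiconv1} (or Proposition~\ref{prop:epiconv2}) to a small closed ball around $x^*$ on which $x^*$ is the \emph{unique} minimizer of $p$ over $\xv$, and then deduce that the associated restricted minimizers of $\ph_k$ converge to $x^*$ and eventually sit in the interior of that ball, hence are local minimizers of the unrestricted problem.

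More concretely, I would first invoke the strict local minimality of $x^*$ to pick $\delta > 0$ such that $p(x) > p(x^*)$ for every $x \in \xv \cap \overline{B}_{\delta}(x^*) \setminus \{x^*\}$, where $\overline{B}_\delta(x^*)$ denotes the closed ball of radius $\delta$ around $x^*$. Define the compact set $C_\delta := \xv \cap \overline{B}_{\delta}(x^*)$ (nonempty since it contains $x^*$) and consider the restricted approximating problems $\min_{x \in C_\delta} \ph_k(x)$. Since each $\ph_k$ is continuous (by Assumption~\ref{ass:phibasic} and \citep[Theorem~7.43]{shapiro2009lectures}) and $C_\delta$ is nonempty and compact, each restricted problem admits a global minimizer $\hat{x}_k$.

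Next I would apply the epi-convergence framework: by Proposition~\ref{prop:epiconv1} (or \ref{prop:epiconv2}), $\{\ph_k + I_{\xv}\}$ epi-converges to $p + I_{\xv}$, and adding the indicator $I_{\overline{B}_{\delta}(x^*)}$ of the closed ball preserves epi-convergence (since the two liminf/limsup conditions in the definition are easily checked for the constant sequence $x_k = x$ and any convergent sequence when paired with a lower semicontinuous summand). Hence $\{\ph_k + I_{C_\delta}\}$ epi-converges to $p + I_{C_\delta}$, and Theorem~7.33 of~\citet{rockafellar2009variational} applied on the compact set $C_\delta$ implies that every accumulation point of $\{\hat{x}_k\}$ is a global minimizer of $p$ over $C_\delta$. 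By the strict minimality choice of $\delta$, this forces the unique accumulation point to be $x^*$, and compactness of $C_\delta$ upgrades this to $\hat{x}_k \to x^*$.

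Finally, to convert $\hat{x}_k$ into a local minimizer of the \emph{unrestricted} approximation $\min_{x \in \xv} \ph_k(x)$, I would observe that since $\hat{x}_k \to x^*$ and $x^*$ lies in the open ball $B_\delta(x^*)$, for all sufficiently large $k$ the point $\hat{x}_k$ lies in $B_\delta(x^*)$; thus there is a neighborhood of $\hat{x}_k$ contained in $C_\delta$ on which $\ph_k$ is minimized at $\hat{x}_k$, so $\hat{x}_k$ is a local minimizer of $\ph_k$ over $\xv$. I expect the main subtlety to be the middle step: verifying that the localized sequence truly epi-converges (so that Theorem~7.33 applies on $C_\delta$) and that \emph{every} accumulation point, not just some subsequence, is $x^*$ — the latter being where the \emph{strict} local minimality hypothesis is essential, since it rules out other cluster points on the boundary $\partial B_\delta(x^*) \cap \xv$.
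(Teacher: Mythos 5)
Your proposal is correct and follows essentially the same route as the paper: restrict to the compact set $\xv \cap \cl{B_{\delta}(x^*)}$ given by strict local minimality, extract global minimizers $\hat{x}_k$ of the restricted approximations, use the epi-convergence machinery (the paper simply invokes Theorem~\ref{thm:convinmin} on the restricted problems, which amounts to your indicator-adding step) to force every accumulation point to equal $x^*$, and then observe that $\hat{x}_k$ eventually lies in the open ball and is therefore a local minimizer of the unrestricted problem. Your explicit verification that localization preserves epi-convergence, and that compactness upgrades ``every accumulation point is $x^*$'' to full convergence, is a slightly more careful rendering of the same argument.
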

\begin{proof}
Since $x^*$ is assumed to be a strict local minimizer of~\eqref{eqn:sp}, there exists $\delta > 0$ such that $p(x) > p(x^*)$, $\forall
x \in \xv \cap \cl{B_{\delta}(x^*)}$. Since $\xv \cap \cl{B_{\delta}(x^*)}$ is compact, $\uset{x \in \xv \cap
\cl{B_{\delta}(x^*)}}{\min}\: \ph_k(x)$ has a global minimum, say $\hat{x}_k$, due to Assumption~\ref{ass:phibasic}A. Furthermore, $\{\hat{x}_k\}$ has a convergent subsequence in $\xv \cap \cl{B_{\delta}(x^*)}$. 

Assume without loss of generality that $\{\hat{x}_k\}$ itself converges to $\hat{x} \in \xv \cap \cl{B_{\delta}(x^*)}$.
Applying Theorem~\ref{thm:convinmin} to the above sequence of restricted minimization problems yields the conclusion
that $\hat{x}$ is a global minimizer of $p$ on $\xv \cap \cl{B_{\delta}(x^*)}$, i.e., $p(x) \geq p(\hat{x})$, $\forall x
\in \xv \cap \cl{B_{\delta}(x^*)}$. Since $x^*$ is a strict local minimizer of~\eqref{eqn:sp}, this implies $\hat{x} =
x^*$. Since $\{\hat{x}_k\} \to x^*$, this implies that $x_k$ belongs to the open ball $B_{\delta}(x^*)$ for $k$
sufficiently large. Therefore, $\hat{x}_k$ is a local minimizer of $\uset{x \in \xv}{\min}\: \ph_k(x)$ for $k$
sufficiently large since it is a global minimizer of the above problem on $B_{\delta}(x^*)$.  \halmos
\end{proof}

We are unable to establish the more desirable statement that a convergent sequence of local minimizers of approximations~\eqref{eqn:app} converges to a local minimizer of~\eqref{eqn:sp} without additional assumptions (cf.~Theorem~\ref{thm:convofstatpts}).

The next few results work towards establishing conditions under which a convergent sequence of stationary solutions to the sequence of approximating problems~\eqref{eqn:app} converges to a stationary point of~\eqref{eqn:sp}.
We make the following additional assumptions on each element of the sequence of smoothing functions $\{\phi_k\}$ for this purpose.
\begin{assumption}
\label{ass:phiadvanced}
\setcounter{mycounter}{\value{assumption}}
The following conditions hold for each $k \in \N$ and $j \in \{1,\cdots,m\}$:
\begin{enumerate}[label=\Alph*.,itemsep=0em]
\item  The derivative mapping $d\phi_{k,j}(\cdot)$ is bounded by $M^{'}_{\phi,k,j}$ on $\R$, i.e., $\abs{d\phi_{k,j}(y)}
\leq M^{'}_{\phi,k,j}$, $\forall y \in \R$. %\label{ass:phiderivbound} 
\item The derivative mapping $d\phi_{k,j}(\cdot)$ is Lipschitz continuous on $\R$ with Lipschitz constant
$L^{'}_{\phi,k,j} \in \R_+$.  %\label{ass:phiderivLipschitz}  
\end{enumerate}
\end{assumption}

The above assumptions are mild since we let the constants $M^{'}_{\phi,k,j}$ and $L^{'}_{\phi,k,j}$ depend on the sequence index $k$.
Note that in light of Assumption~\ref{ass:phibasic}A, Assumption~\ref{ass:phiadvanced}A  is equivalent to the assumption that $\phi_{k,j}(\cdot)$ is Lipschitz continuous on $\R$ with Lipschitz constant $M^{'}_{\phi,k,j}$.
We make the following assumptions on the cumulative distribution function of a `scaled version' of the constraint functions $g$ and on the sequence $\{\phi_{k,j}\}$ of smoothing functions to establish Proposition~\ref{prop:clarkegradlimit} and Theorem~\ref{thm:convofstatpts}.

\begin{assumption}
\label{ass:phiandcdfass}
\setcounter{mycounter}{\value{assumption}}
The following conditions on the constraint functions $g$, the distribution of $\xi$, and the sequences $\{\phi_{k,j}\}$ of smoothing functions hold for each $k \in \N$ and $\nu \in \R$ of interest:
\begin{enumerate}[label=\Alph*.,itemsep=0em]
\item The sequence of smoothing parameters $\{\tau_k\}$ satisfies  %\label{ass:conrescaling}
\begin{align}
\label{eqn:smoothingapproxfactor}
\tau_{k+1,j} = \tau_c \tau_{k,j} \text{  with  } \tau_{1,j} > 0, \quad \forall j \in \{1,\cdots,m\},
\end{align}
for some constant factor $\tau_c \in (0,1)$, i.e., for each $j \in \{1,\cdots,m\}$, the smoothing parameters are chosen from a monotonically decreasing geometric sequence (with limit zero).
Furthermore, the underlying smoothing function $\phi$ is homogeneous of degree zero, i.e.,
\[
\phi(\lambda y;\lambda z) = \phi(y;z), \quad \forall \lambda > 0, y \in \R, z \in \R_+.
\]
%There exist positive constants $\beta_1, \cdots, \beta_m$ such that the approximations $\ph_k(x) := \expect{\max\left[\phi_k\left(g(x,\xi)\right)\right]}$ can be rewritten as $\ph_k(x) = \expect{\bar{\phi}_k\left(\max\left[\bar{g}(x,\xi)\right]\right)}$, where $\bar{g}:\R^n \times \R^d \to \R^{m}$ is defined by $\bar{g}_j(x,\xi) := \beta_j g_j(x,\xi)$, $\forall (x,\xi) \in \R^n \times \R^d$ and $j \in \{1,\cdots,m\}$, and $\bar{\phi}_k$ is a scalar smoothing function satisfying Assumptions~\ref{ass:phibasic} and~\ref{ass:phiadvanced}.

\item
For each $j \in \{1,\cdots,m\}$, let $\bar{g}_j:\R^n \times \R^d \to \R$ be defined by $\bar{g}_j(x,\xi) := \frac{g_j(x,\xi)}{\tau_{1,j}}$, $\forall (x,\xi) \in \R^n \times \R^d$.
Let $F:\R^n \times R \to [0,1]$ denote the cumulative distribution function of $\max\left[\bar{g}(x,\xi)\right]$, i.e., $F(x,\eta) := \prob{\max\left[\bar{g}(x,\xi)\right] \leq \eta}$.
There exists a constant $\theta > 0$ such that distribution function $F$ is continuously differentiable on $\bar{X} \times (-\theta,\theta)$, where $\bar{X} \supset \xv$ is an open subset of $\R^n$. Furthermore, for each $\eta \in \R$, $F(\cdot,\eta)$ is Lipschitz continuous on $\xv$ with a measurable Lipschitz constant $L_F(\eta) \in \R_+$ 
that is also Lebesgue integrable, i.e., $\int L_F(\eta) d\eta < +\infty$.  %\label{ass:cdflipschitz}  
%such that $L_F(\cdot) d\bar{\phi}_k(\cdot)$ is Lebesgue integrable, i.e., $\int L_F(\eta) d\bar{\phi}_k(\eta) d\eta < +\infty$.

\item  %\label{ass:mollifiertail}  
There exists a sequence of positive constants $\{\varepsilon_k\} \downarrow 0$ such that
\[
\lim_{k \to \infty} \int_{\abs{\eta} \geq \varepsilon_k} L_F(\eta) d\phi(\eta;\tau_c^{k-1}) d\eta = 0 \quad \text{and} \quad \lim_{k \to \infty} \phi(\varepsilon_k;\tau_c^{k-1}) - \phi(-\varepsilon_k;\tau_c^{k-1}) = 1.
\]
\end{enumerate}
\end{assumption}

Assumption~\ref{ass:phiandcdfass}A is a mild assumption on the choice of the smoothing functions $\phi_k$ that is
satisfied by all three examples in Section~\ref{subsec:exmsmoothapprox}. When this
assumption is made in conjunction with Assumption~\ref{ass:phibasic}B, the approximation $\ph_k(x) :=
\bexpect{\max\left[\phi_k\left(g(x,\xi)\right)\right]}$ can be rewritten as $\ph_k(x) = \bexpect{\phi\left(\max\left[\bar{g}(x,\xi)\right];\tau_c^{k-1}\right)}$, which lends the following interpretation: the sequence of approximations $\{\ph_k\}$ can be thought of as having been constructed using the same sequence of smoothing parameter values $\{\tau_{k,j}\} = \{\tau_c^{k-1}\}$ for each constraint~$j$ on a rescaled version $\bar{g}_j$ of the constraint function $g_j$.
The Lipschitz continuity assumption in Assumption~\ref{ass:phiandcdfass}B is mild (and is similar to Assumption~4
of~\citet{shan2014smoothing}). The assumption of local continuous differentiability of the distribution function $F$ in
Assumption~\ref{ass:phiandcdfass}B is quite strong
%\footnote{We believe that this assumption can be weakened significantly, see Lemma~4.2 of~\citet{shan2014smoothing}. We do not adopt the assumptions and proof of Lemma~4.2 of~\citet{shan2014smoothing} since we are unable to follow a key step in the proof.} 
(this assumption is similar to Assumption~4 of~\citet{hong2011sequential}). A consequence of this assumption is that the function $p(x) \equiv 1 - F(x,0)$ is continuously differentiable on $\bar{X}$.
Finally, note that Assumption~\ref{ass:phiandcdfass}C is mild, see Section~\ref{subsec:exmsmoothapprox} for examples that (trivially) satisfy it. When made along with Assumption~\ref{ass:phibasic}, the first part of this assumption ensures that the mapping $d\phi(\cdot;\tau_c^{k-1})$ approaches the `Dirac delta function' sufficiently rapidly (cf.~Remark~3.14 of~\citet{ermoliev1995minimization}).

The following result ensures that the Clarke gradient of the approximation $\ph_k$ is well defined.

\begin{lemma}
\label{lem:problipconst}
Suppose Assumptions~\ref{ass:confunc},~\ref{ass:phibasic}, and~\ref{ass:phiadvanced} hold. Then for each $k \in \N$:
\begin{enumerate}
\item $\phi_{k,j}(g_j(\cdot,\xi))$ is Lipschitz continuous on $\xv$ with Lipschitz constant $M^{'}_{\phi,k,j} L_{g,j}(\xi)$ for each $\xi \in \Xi$ and $j \in \{1,\cdots,m\}$.
\item $\ph_k$ is Lipschitz continuous on $\xv$ with Lipschitz constant $\Bexpect{\uset{j \in \{1,\cdots,m\}}{\max}
\bigl[ M^{'}_{\phi,k,j} L_{g,j}(\xi) \bigr]}$.
\end{enumerate}
\end{lemma}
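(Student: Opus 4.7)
The plan is to prove both claims via straightforward composition and aggregation arguments, relying on the mean value theorem and monotonicity of expectation.

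For the first claim, I would start from Assumption~\ref{ass:phiadvanced}A, which gives $|d\phi_{k,j}(y)| \leq M^{'}_{\phi,k,j}$ for all $y \in \R$, combined with Assumption~\ref{ass:phibasic}A (continuous differentiability of $\phi_{k,j}$). By the mean value theorem, $\phi_{k,j}$ is globally Lipschitz continuous on $\R$ with constant $M^{'}_{\phi,k,j}$. Composing with the function $g_j(\cdot,\xi)$, which is Lipschitz continuous on $\xv$ with constant $L_{g,j}(\xi)$ by Assumption~\ref{ass:confunc}A, yields for any $x,y \in \xv$:
\[
\abs{\phi_{k,j}(g_j(x,\xi)) - \phi_{k,j}(g_j(y,\xi))} \leq M^{'}_{\phi,k,j} \abs{g_j(x,\xi) - g_j(y,\xi)} \leq M^{'}_{\phi,k,j} L_{g,j}(\xi) \norm{x - y},
\]
which is the first assertion.

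For the second claim, I would use the standard fact that the pointwise maximum of finitely many Lipschitz functions is Lipschitz with constant equal to the maximum of the individual Lipschitz constants. Applied $\xi$-wise, this gives
\[
\abs*{\max_{j} \phi_{k,j}(g_j(x,\xi)) - \max_{j} \phi_{k,j}(g_j(y,\xi))} \leq \max_{j \in \{1,\cdots,m\}} \bigl[ M^{'}_{\phi,k,j} L_{g,j}(\xi) \bigr] \norm{x - y},
\]
for every $x,y \in \xv$ and every $\xi \in \Xi$. Taking expectations on both sides (note that $\ph_k$ is well defined and the inner random variable is measurable by the blanket measurability assumption) yields
\[
\abs{\ph_k(x) - \ph_k(y)} \leq \bexpect{\max_{j \in \{1,\cdots,m\}} \bigl[ M^{'}_{\phi,k,j} L_{g,j}(\xi) \bigr]} \norm{x - y},
\]
which is exactly the stated Lipschitz constant.

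There is essentially no obstacle here; the main thing to verify is that the expectation defining the Lipschitz constant is finite, which follows from Assumption~\ref{ass:confunc}A (note that $\bexpect{L^2_{g,j}(\xi)} < \infty$ implies $\bexpect{L_{g,j}(\xi)} < \infty$, and hence $\bexpect{\max_j M^{'}_{\phi,k,j} L_{g,j}(\xi)} \leq \sum_{j=1}^m M^{'}_{\phi,k,j} \bexpect{L_{g,j}(\xi)} < +\infty$ because $M^{'}_{\phi,k,j}$ are constants for each fixed $k$). The remark at the end of the assumption block already flags this implication.
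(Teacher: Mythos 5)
Your proposal is correct and follows essentially the same route as the paper: the mean value theorem plus composition for the first claim, and the bound $\abs{\max_j a_j - \max_j b_j} \leq \max_j \abs{a_j - b_j}$ (equivalently, your ``max of Lipschitz functions'' fact) followed by taking expectations for the second. Your added remark on finiteness of the expectation matches the observation the paper makes immediately after Assumption~\ref{ass:confunc}.
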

\begin{proof}
\begin{enumerate}
\item For any $x, y \in \xv$, $j \in \{1,\cdots,m\}$, and $\xi \in \Xi$, we have
\[
\abs{\phi_{k,j}\left(g_j(y,\xi)\right) - \phi_{k,j}\left(g_j(x,\xi)\right)} \leq M^{'}_{\phi,k,j}\abs{g_j(y,\xi) - g_j(x,\xi)} \leq M^{'}_{\phi,k,j} L_{g,j}(\xi) \norm{y-x},
\]
where the first step follows from Assumptions~\ref{ass:phibasic}A and~\ref{ass:phiadvanced}A and the mean-value theorem,
and the second step follows from Assumption~\ref{ass:confunc}A.

\item For any $x, y \in \xv$, we have
\begin{align*}
\abs{\ph_k(y) - \ph_k(x)} &= \abs*{\bexpect{\max\left[\phi_k\left(g(y,\xi)\right)\right] - \max\left[\phi_k\left(g(x,\xi)\right)\right]}} \\
&\leq \bexpect{\max\left[ \abs{\phi_k\left(g(y,\xi)\right) - \phi_k\left(g(x,\xi)\right)} \right]} \\
&\leq \Bexpect{\max_j \left[ M^{'}_{\phi,k,j} L_{g,j}(\xi) \right]} \norm{y-x},
\end{align*}
\end{enumerate}
which completes the proof.  \halmos
\end{proof}

The next result characterizes the Clarke generalized gradient of the approximating objectives~$\ph_k$.

\begin{proposition}
\label{prop:clarkegradapprox}
Suppose Assumptions~\ref{ass:confunc},~\ref{ass:phibasic}, and~\ref{ass:phiadvanced} hold. Then the Clarke generalized gradient of $\ph_k$ can be expressed as
%\[
%\partial \ph_k(x) = \expect{\conv{\left\lbrace\Gamma_k(x,\xi)\right\rbrace}} = \conv{\left\lbrace\expect{\Gamma_k(x,\xi)}\right\rbrace},
%\]
\[
\partial \ph_k(x) = \bexpect{\conv{\left\lbrace\Gamma_k(x,\xi)\right\rbrace}},
\]
where $\Gamma_k : \R^n \times \R^d \rightrightarrows \R^n$ is defined as $\Gamma_k(x,\xi) := \Set{\nabla_x \phi_{k,l}(g_l(x,\xi))}{l \in \A(x,\xi)}$, $\A(x,\xi)$ denotes the set of active indices $l \in \{1,\cdots,m\}$ at which $\max\left[\phi_k\left(g(x,\xi)\right)\right] = \phi_{k,l}\left(g_l(x,\xi)\right)$, and the expectation above is to be interpreted in the sense of Definition~7.39 of~\citet{shapiro2009lectures}.
\end{proposition}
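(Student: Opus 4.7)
The plan has two stages: (i) compute the Clarke subdifferential of the integrand $h_k(\cdot,\xi) := \max[\phi_k(g(\cdot,\xi))]$ for each fixed $\xi$, and then (ii) justify the interchange of the Clarke subdifferential with the expectation.

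For the first stage, I fix $\xi \in \Xi$ and note that, by Assumption~\ref{ass:phibasic}A and the differentiability of $g_j(\cdot,\xi)$, each composition $\phi_{k,j}(g_j(\cdot,\xi))$ is continuously differentiable on an open neighborhood of $\xv$, with gradient $\nabla_x \phi_{k,j}(g_j(x,\xi)) = d\phi_{k,j}(g_j(x,\xi)) \nabla_x g_j(x,\xi)$. Thus $h_k(\cdot,\xi)$ is the pointwise maximum of finitely many $C^1$ functions. Such a function is Clarke regular, and the classical max-rule for the Clarke generalized gradient (Proposition~2.3.12 of~\citet{clarke1990optimization}) yields
\[
\partial_x h_k(x,\xi) \;=\; \conv{\bigl\{\nabla_x \phi_{k,l}(g_l(x,\xi)) : l \in \A(x,\xi)\bigr\}} \;=\; \conv{\{\Gamma_k(x,\xi)\}}.
\]

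For the second stage, I invoke the Clarke subdifferential–expectation interchange result for random locally Lipschitz integrands (e.g., Theorem~7.44 / Theorem~9.60 of~\citet{shapiro2009lectures}, in the spirit of Theorem~2.7.2 of~\citet{clarke1990optimization}). Its hypotheses are readily verified in our setting: Lemma~\ref{lem:problipconst}(i) provides an integrable Lipschitz constant $L_h(\xi) := \max_j M^{'}_{\phi,k,j} L_{g,j}(\xi)$ for $h_k(\cdot,\xi)$, measurability of $\xi\mapsto \Gamma_k(x,\xi)$ follows from continuity of $\phi_{k,j}$ and $d\phi_{k,j}$ together with the measurability conventions of the paper, and Lemma~\ref{lem:problipconst}(ii) gives Lipschitz continuity of $\ph_k$ so that $\partial \ph_k(x)$ is well defined. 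The result then gives the inclusion
\[
\partial \ph_k(x) \;\subseteq\; \bexpect{\partial_x h_k(x,\xi)} \;=\; \bexpect{\conv{\{\Gamma_k(x,\xi)\}}},
\]
with the expectation interpreted in the sense of Definition~7.39 of~\citet{shapiro2009lectures}.

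The key step is promoting this inclusion to the equality claimed in the proposition, and this is the main obstacle. The standard way to do this is to use Clarke regularity: since $h_k(\cdot,\xi)$ is Clarke regular a.e.\ (as a maximum of $C^1$ functions), the interchange theorem gives equality and moreover implies that $\ph_k$ itself is Clarke regular at $x$. I would cite the regularity clause of the same interchange theorem in~\citet{shapiro2009lectures} (which strengthens Clarke's Theorem~2.7.2 under a.s.\ Clarke regularity of the integrand) to close the proof. This completes the characterization, with the remainder of the argument being bookkeeping: checking measurable selections exist for $\Gamma_k(\cdot,\xi)$ and that the integrable bound $L_h$ dominates the norm of any such selection, both of which are immediate from Assumptions~\ref{ass:confunc} and~\ref{ass:phiadvanced}.
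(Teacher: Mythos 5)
Your proposal is correct and follows essentially the same route as the paper: both arguments rest on (i) Clarke regularity of the integrand $\max\left[\phi_k\left(g(\cdot,\xi)\right)\right]$ as a finite maximum of $C^1$ compositions, (ii) the regular-integrand version of the subdifferential--expectation interchange (Theorem~2.7.2 of~\citet{clarke1990optimization}, with integrable Lipschitz constants supplied by Lemma~\ref{lem:problipconst}), and (iii) the max-rule of Proposition~2.3.12 of~\citet{clarke1990optimization} to identify the subdifferential with $\conv{\left\lbrace\Gamma_k(x,\xi)\right\rbrace}$. The only difference is cosmetic --- you apply the max-rule pointwise in $\xi$ before interchanging, whereas the paper interchanges first --- and your identification of regularity as the mechanism for upgrading the inclusion to equality matches the paper's argument exactly.
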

\begin{proof}
From the corollary to Proposition~2.2.1 of~\citet{clarke1990optimization}, we have that $\phi_k\left(g(\cdot,\xi)\right)$ is strictly differentiable. Theorem~2.3.9 of~\citet{clarke1990optimization} then implies that $\max\left[\phi_k\left(g(\cdot,\xi)\right)\right]$ is Clarke regular.
The stronger assumptions of Theorem~2.7.2 of~\citet{clarke1990optimization} are satisfied because of Assumption~\ref{ass:phibasic}, Lemma~\ref{lem:problipconst}, and the regularity of $\max\left[\phi_k\left(g(\cdot,\xi)\right)\right]$, which then yields
\[
\partial \ph_k(x) = \partial \bexpect{\max\left[\phi_k\left(g(x,\xi)\right)\right]} = \bexpect{\partial_x\max\left[\phi_k\left(g(x,\xi)\right)\right]}.
\]
Noting that $\phi_k\left(g(\cdot,\xi)\right)$ is Clarke regular from Proposition~2.3.6
of~\citet{clarke1990optimization}, the stated equality then follows from Proposition~2.3.12
of~\citet{clarke1990optimization}.  \halmos
\end{proof}

The next result, similar to Lemma~4.2 of~\citet{shan2014smoothing}, helps characterize the accumulation points of stationary solutions to the sequence of approximations~\eqref{eqn:app}.

\begin{proposition}
\label{prop:clarkegradlimit}
Suppose Assumptions~\ref{ass:confunc},~\ref{ass:phibasic},~\ref{ass:phiadvanced}, and~\ref{ass:phiandcdfass} hold. Then
\[
\uset{k \to \infty}{\limsup_{x \to \bar{x}}}\: \partial \ph_k(x) + N_{\xv}(x) \subset \{\nabla p(\bar{x})\} + N_{\xv}(\bar{x}).
\]
\end{proposition}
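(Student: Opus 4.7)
My plan is to show that, under the stated assumptions, each $\ph_k$ is in fact continuously differentiable on the open set $\bar{X}$ and that $\nabla\ph_k\to\nabla p$ uniformly on compact subsets of $\bar{X}$. Since $\partial\ph_k(x)$ is then the singleton $\{\nabla\ph_k(x)\}$ and $N_{\xv}(\cdot)$ is outer semicontinuous on the closed convex set $\xv$, the proposition will follow at once.

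The first step is to rewrite $\ph_k$ in a form amenable to differentiation. Using the homogeneity of $\phi$ (Assumption~\ref{ass:phiandcdfass}A) and its monotonicity in the first argument (Assumption~\ref{ass:phibasic}B),
\[
\ph_k(x) \;=\; \bexpect{\phi\bigl(\max[\bar{g}(x,\xi)];\tau_c^{k-1}\bigr)} \;=\; \int_{\R}\phi(\eta;\tau_c^{k-1})\,dF(x,\eta).
\]
Integration by parts (justified since $F(x,\cdot)$ is a CDF and $\phi(\cdot;\tau)$ is bounded and $C^1$) gives $\ph_k(x) = \phi(+\infty;\tau_c^{k-1}) - \int_{\R} F(x,\eta)\,d\phi(\eta;\tau_c^{k-1})\,d\eta$. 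The Lipschitz bound $\|\nabla_x F(x,\eta)\|\leq L_F(\eta)$ from Assumption~\ref{ass:phiandcdfass}B, combined with local continuity of $\nabla_x F$ near $\eta=0$ and the integrability controls of Assumption~\ref{ass:phiandcdfass}C, supplies a dominating integrand and permits differentiation under the integral:
\[
\nabla\ph_k(x) \;=\; -\int_{\R}\nabla_x F(x,\eta)\,d\phi(\eta;\tau_c^{k-1})\,d\eta.
\]
Thus $\ph_k\in C^1(\bar{X})$; this is consistent with Proposition~\ref{prop:clarkegradapprox} because Assumption~\ref{ass:phiandcdfass}B implies Assumption~\ref{ass:probzero}, so the max is attained at a unique index a.e.

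Since $p(x)=1-F(x,0)$, $\nabla p(x)=-\nabla_x F(x,0)$. Setting $I_k := \int_{\R}d\phi(\eta;\tau_c^{k-1})\,d\eta$, I would write
\[
\nabla\ph_k(x)-\nabla p(x) \;=\; -\int_{\R}\bigl[\nabla_x F(x,\eta)-\nabla_x F(x,0)\bigr]d\phi(\eta;\tau_c^{k-1})\,d\eta \;-\; (I_k-1)\nabla_x F(x,0),
\]
and split the first integral over $|\eta|<\varepsilon_k$ and $|\eta|\geq\varepsilon_k$. On $|\eta|<\varepsilon_k$, uniform continuity of $\nabla_x F$ on compact subsets of $\bar{X}\times(-\theta,\theta)$ forces the integrand to zero while the mass $\phi(\varepsilon_k;\tau_c^{k-1})-\phi(-\varepsilon_k;\tau_c^{k-1})$ stays bounded (and, by Assumption~\ref{ass:phiandcdfass}C, tends to $1$). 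On $|\eta|\geq\varepsilon_k$, the integrand is dominated by $L_F(\eta)+L_F(0)$; the $L_F(\eta)$ contribution vanishes by the first condition of Assumption~\ref{ass:phiandcdfass}C, and the $L_F(0)$ contribution vanishes because $\int_{|\eta|\geq\varepsilon_k}d\phi\to 0$ (which also delivers $I_k\to 1$ via monotonicity of $\phi$). This yields $\nabla\ph_k\to\nabla p$ uniformly on compact subsets of $\bar{X}$.

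The conclusion is then routine: for any sequence $x_k\to\bar{x}$ and $v_k=\nabla\ph_k(x_k)+w_k\to\bar{v}$ with $w_k\in N_{\xv}(x_k)$, the first summand converges to $\nabla p(\bar{x})$ by continuity of $\nabla p$ and uniform convergence, so $w_k\to\bar{v}-\nabla p(\bar{x})$, and outer semicontinuity of the normal cone map for closed convex $\xv$ places this limit in $N_{\xv}(\bar{x})$. I expect the main obstacle to be the careful justification of the integration-by-parts (handling the boundary behavior at $\pm\infty$) and of Leibniz's rule (establishing the dominating function from Assumption~\ref{ass:phiandcdfass}B--C), together with extracting uniform-on-compacta convergence---the last is what ultimately enables the joint $k\to\infty$, $x\to\bar{x}$ limit in the $\limsup$.
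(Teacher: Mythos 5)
Your overall strategy---integrate by parts to write $\ph_k(x)$ as $-\int F(x,\eta)\,d\phi(\eta;\tau_c^{k-1})\,d\eta$ plus a constant, split the integral at $\pm\varepsilon_k$, kill the tail with Assumption~\ref{ass:phiandcdfass}C, extract $\nabla p(\bar{x})=-\nabla_x F(\bar{x},0)$ from the central part via continuity of $\nabla_x F$ near $(\bar{x},0)$, and finish with outer semicontinuity of $N_{\xv}(\cdot)$---is exactly the paper's. The gap is in the opening claim that $\ph_k\in C^1(\bar{X})$ with $\nabla\ph_k(x)=-\int_{\R}\nabla_x F(x,\eta)\,d\phi(\eta;\tau_c^{k-1})\,d\eta$. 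Assumption~\ref{ass:phiandcdfass}B only guarantees that $F$ is continuously differentiable on $\bar{X}\times(-\theta,\theta)$; for $\abs{\eta}\geq\theta$ the map $F(\cdot,\eta)$ is merely Lipschitz with constant $L_F(\eta)$, so the ``dominating function'' $\norm{\nabla_x F(x,\eta)}\leq L_F(\eta)$ that you invoke to justify Leibniz's rule refers to a gradient that need not exist on the tail. Consequently neither the single-valuedness of $\partial\ph_k$ nor the uniform convergence $\nabla\ph_k\to\nabla p$ on compacta is available under the stated hypotheses, and the clean ``$v_k=\nabla\ph_k(x_k)+w_k$'' decomposition in your last paragraph is not literally licensed.

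The repair is essentially what the paper does: stay inside Clarke's calculus. One has the inclusion $\partial\ph_k(x)\subset-\partial\int_{\abs{\eta}\geq\varepsilon_k}F(x,\eta)\,d\phi\,d\eta-\partial\int_{-\varepsilon_k}^{\varepsilon_k}F(x,\eta)\,d\phi\,d\eta$, and Clarke's interchange theorem (Theorem~2.7.2 of the reference) applies to the Lipschitz tail term, bounding the norm of \emph{every} selection by $\int_{\abs{\eta}\geq\varepsilon_k}L_F(\eta)\,d\phi(\eta;\tau_c^{k-1})\,d\eta\to0$ without ever needing $\nabla_x F$ there; genuine gradients of $F$ are used only on $[-\varepsilon_k,\varepsilon_k]\subset(-\theta,\theta)$ (valid for large $k$ since $\varepsilon_k\downarrow0$), where the paper applies the first mean value theorem for integrals componentwise rather than your uniform-continuity estimate---the two are interchangeable. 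This set-valued version of your splitting also supplies the uniform boundedness of $\partial\ph_k(x_k)$ needed to separate the $\limsup$ of the sum into the sum of $\limsup$s before invoking outer semicontinuity of the normal cone, a step you correctly anticipate but which deserves explicit mention. Finally, your parenthetical that Assumption~\ref{ass:phiandcdfass}B forces the max to be attained at a unique index a.e.\ is not what that assumption says and is not needed.
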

\begin{proof}
See Appendix~\ref{subsec:propclarkegradlimit}.
 \halmos
\end{proof}

The following key result is an immediate consequence of the above proposition (cf.~\citet[Theorem~4.2]{shan2014smoothing}), and establishes convergence of Clarke stationary solutions of the sequence of approximating problems~\eqref{eqn:app} to those of the true problem~\eqref{eqn:sp}.

\begin{theorem}
\label{thm:convofstatpts}
Suppose Assumptions~\ref{ass:xcompact},~\ref{ass:confunc},~\ref{ass:phibasic},~\ref{ass:phiadvanced}, and~\ref{ass:phiandcdfass} hold. Let $\{x_k\}$ be a sequence of stationary solutions to the sequence of approximating problems $\uset{x \in \xv}{\min}\: \ph_k(x)$. Then every accumulation point of $\{x_k\}$ is an stationary solution to $\uset{x \in \xv}{\min}\: p(x)$.
\end{theorem}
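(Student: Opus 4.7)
The plan is to derive the conclusion as an essentially immediate consequence of Proposition~\ref{prop:clarkegradlimit}, interpreting the hypothesis that each $x_k$ is stationary for $\min_{x\in\xv}\ph_k(x)$ in terms of the Clarke generalized gradient plus the normal cone. Specifically, stationarity of $x_k$ means $x_k \in \xv$ and $0 \in \partial \ph_k(x_k) + N_{\xv}(x_k)$, and stationarity for $\min_{x\in\xv} p(x)$ at a point $\bar x$ means $\bar x \in \xv$ and $-\nabla p(\bar x) \in N_{\xv}(\bar x)$ (here we may speak of $\nabla p$ because Assumption~\ref{ass:phiandcdfass}B makes $p \equiv 1-F(\cdot,0)$ continuously differentiable on an open neighborhood $\bar X \supset \xv$).

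First I would fix an accumulation point $\bar x$ of $\{x_k\}$ and pass to a subsequence $\{x_{k_j}\}$ with $x_{k_j} \to \bar x$. Closedness of $\xv$ (which is guaranteed under Assumption~\ref{ass:xcompact}) gives $\bar x \in \xv$. Next, since $x_{k_j}$ is stationary for the $k_j$-th approximating problem, there exist $v_{k_j} \in \partial \ph_{k_j}(x_{k_j})$ and $w_{k_j} \in N_{\xv}(x_{k_j})$ with $v_{k_j} + w_{k_j} = 0$, so the zero vector lies in $\partial \ph_{k_j}(x_{k_j}) + N_{\xv}(x_{k_j})$ for every $j$.

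Then I would apply the definition of the outer limit: because $0 \in \partial \ph_{k_j}(x_{k_j}) + N_{\xv}(x_{k_j})$ with $k_j \to \infty$ and $x_{k_j} \to \bar x$, the zero vector belongs to
\[
\uset{k \to \infty}{\limsup_{x \to \bar x}}\: \partial \ph_k(x) + N_{\xv}(x).
\]
Invoking Proposition~\ref{prop:clarkegradlimit} then gives $0 \in \{\nabla p(\bar x)\} + N_{\xv}(\bar x)$, i.e., $-\nabla p(\bar x) \in N_{\xv}(\bar x)$, which is exactly the Clarke stationarity condition for $\min_{x\in\xv} p(x)$ at $\bar x$.

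The only subtlety, and what I would view as the main thing to be careful about, is the set-valued book-keeping in the outer-limit step: one must verify that a selection $(v_{k_j}, w_{k_j})$ as above can be made (this is trivial, since $v_{k_j} = -w_{k_j}$ suffices) and that the outer-limit containment in Proposition~\ref{prop:clarkegradlimit} indeed captures sequences with $x_{k_j} \to \bar x$ along with $k_j \to \infty$. Since Proposition~\ref{prop:clarkegradlimit} is stated precisely in this joint-limit form, no further work is needed and the result follows.
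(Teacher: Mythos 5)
Your proposal is correct and follows essentially the same route as the paper: both read stationarity of $x_k$ as $0 \in \partial \ph_k(x_k) + N_{\xv}(x_k)$, pass to a convergent subsequence, and invoke Proposition~\ref{prop:clarkegradlimit} (whose joint outer limit is stated exactly in the required form) to conclude $0 \in \{\nabla p(\bar{x})\} + N_{\xv}(\bar{x})$. The extra selection/book-keeping detail you spell out is implicit in the paper's shorter argument.
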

\begin{proof}
From the corollary to Proposition~2.4.3 of~\citet{clarke1990optimization}, we have $0 \in \partial \ph_k(x_k) + N_{\xv}(x_k)$ by virtue of the stationarity of $x_k$ for $\uset{x \in \xv}{\min}\: \ph_k(x)$. The stated result follows from Proposition~\ref{prop:clarkegradlimit} since $0 \in \{\nabla p(\bar{x})\} + N_{\xv}(\bar{x})$ for any accumulation point $\bar{x}$ of $\{x_k\}$ (see Theorem~5.37 of~\citet{rockafellar2009variational}). 
 \halmos
\end{proof}

\subsection{Examples of smooth approximations}
\label{subsec:exmsmoothapprox}

We present a few examples of sequences of smooth approximations of the step function that satisfy Assumptions~\ref{ass:phibasic},~\ref{ass:phiadvanced}, and~\ref{ass:phiandcdfass}.
Throughout this section, we let $\{\tau_{k,j}\}$, $j \in \{1,\cdots,m\}$, denote sequences of \textit{positive} reals satisfying~\eqref{eqn:smoothingapproxfactor}.

\begin{example}
This example is from Example~5.1 of~\citet{shan2014smoothing}. The sequence of approximations
\[
\phi_{k,j}(y) = \begin{cases} 
0 &\mbox{if } y < -\tau_{k,j}, \\ 
1 - 2\left(\frac{y}{\tau_{k,j}}\right)^3 - 3\left(\frac{y}{\tau_{k,j}}\right)^2 &\mbox{if } -\tau_{k,j} \leq y \leq 0, \\
1 & \mbox{if } y > 0 
\end{cases}
\]
of the step function satisfy Assumptions~\ref{ass:phibasic} and~\ref{ass:phiadvanced}.
Assumption~\ref{ass:phiandcdfass}A is easily verified, and Assumption~\ref{ass:phiandcdfass}C trivially holds with $\varepsilon_k = \tau_c^{k-1}$.
\end{example}

\begin{example}
This example is based on the above example and the outer-approximation framework of~\citet{geletu2017inner}. The sequence of approximations
\[
\phi_{k,j}(y) = \begin{cases} 
0 &\mbox{if } y < 0, \\ 
1 - 2\left(\frac{y-\tau_{k,j}}{\tau_{k,j}}\right)^3 - 3\left(\frac{y-\tau_{k,j}}{\tau_{k,j}}\right)^2 &\mbox{if } 0 \leq y \leq \tau_{k,j}, \\
1 & \mbox{if } y > \tau_{k,j} 
\end{cases}
\]
of the step function satisfy the strong form of Assumption~\ref{ass:phibasic} and Assumption~\ref{ass:phiadvanced}.
Assumption~\ref{ass:phiandcdfass}A is easily verified, and Assumption~\ref{ass:phiandcdfass}C trivially holds with $\varepsilon_k = \tau_c^{k-1}$.
\end{example}

The next example is the sequence of smooth approximations $\{\phi_k\}$ that we adopt in this work.

\begin{example}
\label{exm:oursmoothapprox}
The sequence of approximations
\[
\phi_{k,j}(y) = \dfrac{1}{1 + \exp\left(-\frac{y}{\tau_{k,j}}\right)}
\]
of the step function satisfy Assumptions~\ref{ass:phibasic} and~\ref{ass:phiadvanced} (see Proposition~\ref{prop:oursmoothapproxconst}).
Assumption~\ref{ass:phiandcdfass}A is easily verified, and Assumption~\ref{ass:phiandcdfass}C could be satisfied with $\varepsilon_k = \tau_c^{\beta(k-1)}$ for some constant $\beta \in (0,1)$ (depending on the distribution function $F$, cf.~Remark~3.14 of~\citet{ermoliev1995minimization}).
\end{example}

Figure~\ref{fig:compsmoothfns} illustrates the above smoothing functions with the common smoothing parameter $\tau = 0.1$.
We refer the reader to the works of~\citet{shan2014smoothing},~\citet{geletu2017inner}, and~\citet{cao2018sigmoidal} for other examples of `smooth' approximations that can be accommodated within our framework.
The next result estimates some important constants related to our smooth approximation in Example~\ref{exm:oursmoothapprox}.

\begin{proposition}
\label{prop:oursmoothapproxconst}
The sequence of approximations $\{\phi_k\}$ in Example~\ref{exm:oursmoothapprox} satisfies Assumption~\ref{ass:phiadvanced} with constants $M^{'}_{\phi,k,j} \leq 0.25 \tau^{-1}_{k,j}$ and $L^{'}_{\phi,k,j} \leq 0.1\tau^{-2}_{k,j}$, $\forall k \in \N$, $j \in \{1,\cdots,m\}$.
\end{proposition}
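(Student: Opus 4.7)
The plan is to verify Assumption~\ref{ass:phiadvanced} by explicit computation of the first two derivatives of the sigmoid $\phi_{k,j}(y) = (1+\exp(-y/\tau_{k,j}))^{-1}$ and optimizing elementary one-variable expressions. Since the dependence on $\tau_{k,j}$ can be factored out via the chain rule, the whole argument reduces to bounding the first and second derivatives of the standard sigmoid $\sigma(z) := (1+e^{-z})^{-1}$ and then scaling.

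First I would record the identities $\sigma'(z) = \sigma(z)\bigl(1 - \sigma(z)\bigr)$ and $\sigma''(z) = \sigma(z)\bigl(1-\sigma(z)\bigr)\bigl(1 - 2\sigma(z)\bigr)$, both of which follow by direct differentiation. The chain rule then gives
\[
d\phi_{k,j}(y) = \tau_{k,j}^{-1}\,\sigma'\!\bigl(y/\tau_{k,j}\bigr), \qquad d^2\phi_{k,j}(y) = \tau_{k,j}^{-2}\,\sigma''\!\bigl(y/\tau_{k,j}\bigr),
\]
so it suffices to uniformly bound $\sigma'$ and $\sigma''$ on $\mathbb{R}$.

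For part~A of the assumption, substitute $u = \sigma(z) \in (0,1)$; then $|\sigma'(z)| = u(1-u)$, which attains its maximum $1/4$ at $u = 1/2$ (equivalently $z=0$). This yields $|d\phi_{k,j}(y)| \leq 0.25\,\tau_{k,j}^{-1}$, so one may take $M'_{\phi,k,j} = 0.25\,\tau_{k,j}^{-1}$.

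For part~B, I would maximize $|\sigma''(z)| = u(1-u)|1-2u|$ over $u \in (0,1)$. By symmetry around $u = 1/2$ it suffices to look at $u \in (0,1/2]$, where the function $f(u) = u - 3u^2 + 2u^3$ satisfies $f'(u) = 1 - 6u + 6u^2 = 0$ at $u^* = \tfrac{1}{2} - \tfrac{1}{2\sqrt{3}}$. A short computation gives $u^*(1-u^*) = 1/6$ and $1 - 2u^* = 1/\sqrt{3}$, so the maximum equals $\tfrac{1}{6\sqrt{3}} < 0.1$. Hence $|d^2\phi_{k,j}(y)| \leq 0.1\,\tau_{k,j}^{-2}$ on $\mathbb{R}$, and the mean-value theorem applied to the continuously differentiable map $d\phi_{k,j}$ yields the Lipschitz bound $L'_{\phi,k,j} \leq 0.1\,\tau_{k,j}^{-2}$, completing the proof.

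There is no real obstacle here; the only non-automatic step is the one-variable optimization that gives the sharper constant $\tfrac{1}{6\sqrt{3}}$ for $\|\sigma''\|_\infty$, which one could also replace by the coarser bound $u(1-u)|1-2u| \leq u(1-u) \leq 1/4$ if a looser constant were acceptable, but the stated $0.1$ requires the exact critical-point calculation above.
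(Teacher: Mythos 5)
Your proof is correct and follows essentially the same route as the paper: both compute the first two derivatives explicitly and reduce the second-derivative bound to a one-variable optimization whose maximum is $\tfrac{1}{6\sqrt{3}} < 0.1$ (the paper parametrizes by $x = e^{y/(2\tau_{k,j})}$ and maximizes $\abs{x - 1/x}/(x+1/x)^3$, while you parametrize by $u = \sigma(z)$ and maximize $u(1-u)\abs{1-2u}$, which is the same calculation after a change of variables). No gaps.
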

\begin{proof}
The bounds follow by noting that for each $y \in \R$:
\begin{align*}
d\phi_k(y) &= \dfrac{\tau^{-1}_{k,j}}{\left[\exp(0.5\tau^{-1}_{k,j} y) + \exp(-0.5\tau^{-1}_{k,j} y)\right]^2} \leq 0.25 \tau^{-1}_{k,j}, \:\: \text{and} \\
\abs*{d^2\phi_{k,j}(y)} &= \tau^{-2}_{k,j} \abs*{\dfrac{\exp\left( \frac{y}{2\tau_{k,j}} \right) - \exp\left( -\frac{y}{2\tau_{k,j}} \right)}{\left[ \exp\left( \frac{y}{2\tau_{k,j}} \right) + \exp\left( -\frac{y}{2\tau_{k,j}} \right) \right]^3}} \leq 0.1 \tau^{-2}_{k,j},
\end{align*}
where the final inequality is obtained by maximizing the function $x \mapsto \abs*{\dfrac{x - \frac{1}{x}}{\left( x +
\frac{1}{x} \right)^3}}$ on $\R$.  \halmos
\end{proof}

\begin{figure}[t]
\begin{center}
\caption{Illustration of the examples of smoothing functions in Section~\ref{subsec:exmsmoothapprox}.} \label{fig:compsmoothfns}
\includegraphics[height=2.8in]{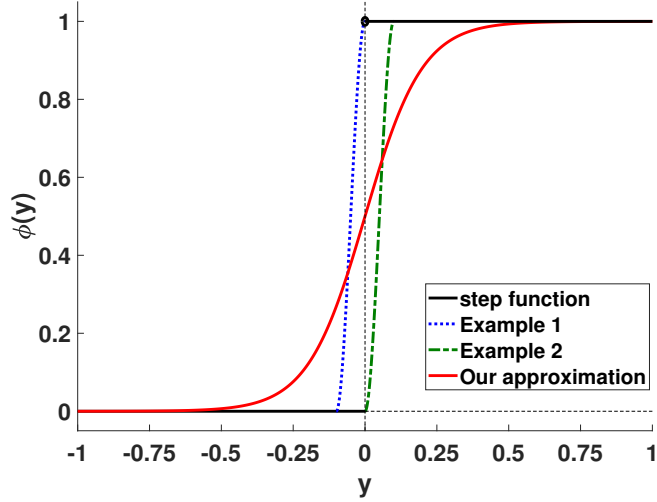}
\end{center}
\end{figure}

\section{Proposed algorithm}
\label{sec:propalgo}

Our proposal for approximating the efficient frontier of~\eqref{eqn:ccp} involves solving a sequence of approximations~\eqref{eqn:app} constructed using the sequence of smooth approximations of the step function introduced in Example~\ref{exm:oursmoothapprox}. 
We use the projected stochastic subgradient algorithm of~\citet{davis2018stochastic} to obtain an approximately stationary solution to~\eqref{eqn:app} for each value of the objective bound $\nu$ and each element of the sequence of smoothing parameters~$\{\tau_{k}\}$ (note that we suppress the constraint index $j$ in our notation from here on unless necessary).
Section~\ref{subsec:algooutline} outlines a conceptual algorithm for approximating the efficient frontier of~\eqref{eqn:ccp} using the projected stochastic subgradient algorithm, 
%of~\citet{davis2018stochastic},
and Sections~\ref{subsec:algoparams} and~\ref{subsec:practcon} outline our proposal for estimating some parameters of the algorithm in Section~\ref{subsec:algooutline} that are important for a good practical implementation.
%We close this section with a discussion of our proposed approach in Section~\ref{subsec:limitapp}.
%(this section includes an extension of Algorithm~\ref{alg:propalgoef} to the case when Problem~\eqref{eqn:ccp} contains multiple sets of joint chance constraints).

\subsection{Outline of the algorithm}
\label{subsec:algooutline}

Algorithm~\ref{alg:protoalgo} presents a prototype of our approach for constructing an approximation of the efficient frontier of~\eqref{eqn:ccp}.
It takes as input an initial guess $\hat{x}^0$, a sequence of objective bounds $\{\nu^i\}_{i=1}^{M}$, the maximum number of `runs', $R$, of the projected stochastic subgradient method, and the maximum number of iterations of the method in each run, $N_{max}$.
For each objective bound $\nu^i$, we first determine a sequence of smoothing parameters and step lengths $\{(\tau_k,\gamma_k)\}_{k=1}^{K}$ for the sequence of approximations~\eqref{eqn:app}.
The middle loop of Algorithm~\ref{alg:protoalgo} then iteratively solves the sequence of approximations~\eqref{eqn:app} with $\nu = \nu^i$ and smoothing parameters $\{\tau_k\}$ by using the solution from the previous approximation as its initial guess. 
The innermost loop of Algorithm~\ref{alg:protoalgo} monitors the progress of the stochastic subgradient method over several `runs'. Each run generates a candidate solution $\bar{x}^{i,k,r}$ with risk level $\bar{\alpha}^{i,k,r}$ (we omit the superscript $r$ in Algorithm~\ref{alg:protoalgo} for simplicity), and we move on to the next approximation in the sequence either if insufficient progress is made over the last few runs, or if the number of runs exceeds its limit.
Since the choice of the smoothing parameters $\{\tau_k\}$, step lengths $\{\gamma_k\}$, and the termination criteria
employed within Algorithm~\ref{alg:protoalgo} are important for its practical performance, we present our proposal for estimating/setting these algorithmic parameters from the problem data in Section~\ref{subsec:algoparams}.
First, we split Algorithm~\ref{alg:protoalgo} into several component algorithms so that it is easier to parse.

Algorithm~\ref{alg:propalgoef} outlines our proposal for approximating the efficient frontier of~\eqref{eqn:ccp}. This algorithm takes as its inputs an initial guess $\hat{x}^0 \in X$, an initial objective bound $\bar{\nu}^0$ of interest, an objective spacing $\tilde{\nu}$ for discretizing the efficient frontier, and a lower bound $\alpha_{low}$ on risk levels of interest that is used as a termination criterion. Section~\ref{subsec:algoparams} prescribes ways to determine a good initial guess $\hat{x}^0$ and initial bound $\bar{\nu}^0$, whereas Section~\ref{subsec:impldet} lists our setting for $\tilde{\nu}$, $\alpha_{low}$, and other algorithmic parameters.
The preprocessing step uses Algorithm~\ref{alg:scaling} to determine a suitable initial sequence of smoothing parameters $\{\tau_k\}$ based on the problem data, and uses this choice of $\{\tau_k\}$ to determine an initial sequence of step lengths for the stochastic subgradient algorithm (see Section~\ref{subsec:algoparams}). 
Good choices of both parameters are critical for our proposal to work well.
The optimization phase of Algorithm~\ref{alg:propalgoef} uses Algorithm~\ref{alg:propalgo} to solve the sequence of approximations~\eqref{eqn:app} for different choices of the objective bound $\nu = \bar{\nu}^i$ (note that Algorithm~\ref{alg:propalgo} rescales its input smoothing parameters $\{\bar{\tau}_k\}$ at the initial guess $\bar{x}^i$ for each bound $\bar{\nu}^i$).
Finally, Algorithm~\ref{alg:propalgoccp} in the appendix adapts Algorithm~\ref{alg:propalgoef} to solve~\eqref{eqn:ccp} approximately for a given risk level $\hat{\alpha}$.

\begin{algorithm}
\caption{Prototype of algorithm for approximating the efficient frontier of~\eqref{eqn:ccp}}
\label{alg:protoalgo}
{
%\fontsize{11}{14}\selectfont
\begin{algorithmic}[1]

\State \textbf{Input}: initial point $\hat{x}^0 \in X$, sequence of objective bounds $\{\nu^i\}_{i=1}^{M}$, maximum number of projected stochastic subgradient iterations, $N_{max}$, and number of `runs' of the stochastic subgradient method, $R$.
%, sequence of smoothing parameters $\{\tau_k\}_{k=1}^{K}$, and sequence of step lengths $\{\gamma_k\}_{k=1}^{K}$ for the sequence of approximations~\eqref{eqn:app} with smoothing parameters $\{\tau_k\}_{k=1}^{K}$.

\State \textbf{Output}: pairs $\{(\nu^i,\hat{\alpha}^i)\}_{i=1}^{M}$ of objective values and risk levels on the approximation of the efficient frontier along with associated solutions $\{\hat{x}^i\}_{i=1}^{M}$.

%\State \textbf{Set algorithmic parameters}: maximum number of projected stochastic subgradient iterations $N_{max}$, number of `runs' of the stochastic subgradient method $R$, sequence of smoothing parameters $\{\tau_k\}_{k=1}^{K}$, and sequence of step lengths $\{\gamma_k\}_{k=1}^{K}$ for the sequence of approximations~\eqref{eqn:app} with smoothing parameters $\{\tau_k\}_{k=1}^{K}$.

\For{iteration $i = 1$ to $M$} \Comment{Loop through the objective bounds $\{\nu^i\}$}

\State Set initial guess $\bar{x}^{i,0} := \hat{x}^{i-1}$.

\State Determine sequence of smoothing parameters and step lengths $\{(\tau_k,\gamma_k)\}_{k=1}^{K}$.

\For{approximation $k = 1$ to $K$} \Comment{Loop through the sequence of smooth approximations}

\State Set initial guess $\tilde{x}^{k,0} := \bar{x}^{i,k-1}$ for approximation $k$.

\For{run $r = 1$ to $R$} \Comment{Runs of the stochastic subgradient method}

\State Initialize the first iterate $x^{1} := \tilde{x}^{k,0}$.

\State Choose number of iterations $N$ for this run uniformly at random from $\{1,\cdots,N_{max}\}$.

\State Execute $N$ iterations of the projected stochastic subgradient algorithm of~\citep{davis2018stochastic} for solving~\eqref{eqn:app} with

\Statex \hspace*{0.55in} $\nu = \nu^i$ and smoothing parameter $\tau_k$ using step length $\gamma_k$. 

\State Set $\bar{x}^{i,k}$ to be the last iterate of the stochastic subgradient method, and estimate its risk level $\bar{\alpha}^{i,k}$.

\State Update incumbents $(\hat{\alpha}^i,\hat{x}^i)$ with $(\bar{\alpha}^{i,k},\bar{x}^{i,k})$ if needed. Update $\tilde{x}^{k,0} = \bar{x}^{i,k}$.
%\Statex \hspace*{0.42in} $\hat{\alpha} > \bar{\alpha}^i$.

\State If there is insufficient improvement in the incumbent $\hat{\alpha}^i$ over the last few runs, exit the `runs' loop.

\EndFor

\State Update initial guess for the next approximation in the sequence to the incumbent solution, i.e., set $\bar{x}^{i,k} = \hat{x}^i$.

\EndFor

\EndFor

\end{algorithmic}
}
\end{algorithm}

\begin{algorithm}
\caption{Approximating the efficient frontier of~\eqref{eqn:ccp}}
\label{alg:propalgoef}
{
%\fontsize{11}{14}\selectfont
\begin{algorithmic}[1]

\State \textbf{Input}: initial point $\hat{x}^0 \in X$, initial objective bound $\bar{\nu}^0$, objective spacing $\tilde{\nu}$, and lower bound on risk levels of interest $\alpha_{low} \in (0,1)$.

\State \textbf{Set algorithmic parameters}: mini-batch size $M$, maximum number of iterations $N_{max}$, minimum and maximum number of `runs', $R_{min}$ and $R_{max}$, parameters for determining and updating step length, termination criteria, fixed sample $\{\bar{\xi}^{l}\}_{l=1}^{N_{MC}}$ from $\P$ for estimating risk levels of candidate solutions, and sequence of smoothing parameters $\{\bar{\tau}_k\}_{k=1}^{K}$ for some $K \in \N$.

\State \textbf{Output}: set of pairs $\{(\bar{\nu}^i,\bar{\alpha}^i)\}$ of objective values and risk levels that can be used to approximate the efficient frontier and associated solutions $\{\hat{x}^i\}$.

\State \textbf{Preprocessing}: determine smoothing parameters $\{\tau_k\}_{k=1}^{K}$ scaled at $\proj{\hat{x}^0}{X_{\bar{\nu}^0}}$ using Algorithm~\ref{alg:scaling}, and an initial sequence of step lengths $\{\bar{\gamma}_k\}_{k=1}^{K}$ for this sequence of approximating problems~\eqref{eqn:app} with $\nu = \bar{\nu}^0$ using Algorithm~\ref{alg:steplength}.

\State \textbf{Optimization Phase}:

\State Initialize iteration count $i = 0$.

\Repeat

\State Update iteration count $i \leftarrow i+1$ and set objective bound $\bar{\nu}^i = \bar{\nu}^0 - (i-1)\tilde{\nu}$.

\State Obtain $(\bar{\alpha}^i,\hat{x}^i)$ by solving the sequence of approximations~\eqref{eqn:app} using Algorithm~\ref{alg:propalgo} with objective bound $\nu = \bar{\nu}^i$,

\Statex \hspace*{0.2in} scaling parameters $\{\bar{\tau}_k\}_{k=1}^{K}$, the above algorithmic parameter settings, and $\bar{x}^i := \proj{\hat{x}^{i-1}}{X_{\bar{\nu}^i}}$ as the

\Statex \hspace*{0.2in} initial guess.

\Until{$\bar{\alpha}^i \leq \alpha_{low}$}

\end{algorithmic}
}
\end{algorithm}

Algorithm~\ref{alg:propalgo} solves a sequence of approximations~\eqref{eqn:app} for a given objective bound $\nu$ using the projected stochastic subgradient algorithm of~\citet{davis2018stochastic} and an adaptation of the two-phase randomized stochastic projected gradient method of~\citet{ghadimi2016mini}.
This algorithm begins by rescaling the smoothing parameters $\{\bar{\tau}_k\}$ at the initial guess $\bar{x}^0$ using Algorithm~\ref{alg:scaling} to ensure that the sequence of approximations~\eqref{eqn:app} are `well-scaled'\footnote{We make this notion more precise in Section~\ref{subsec:algoparams}.} at $\bar{x}^0$.
The optimization phase then solves each element of the sequence of approximations~\eqref{eqn:app} using two loops: the inner loop employs a mini-batch version\footnote{Although~\citet{davis2018stochastic} establish that mini-batching is not necessary for the convergence of the projected stochastic subgradient algorithm, a small mini-batch can greatly enhance the performance of the algorithm in practice.} of the algorithm of~\citet{davis2018stochastic} to solve~\eqref{eqn:app} for a given smoothing parameter $\tau_k$, and the outer loop assesses the progress of the inner loop across multiple `runs/replicates' (the initialization strategy for each run is based on \texttt{Algorithm 2-RSPG-V} of~\citet{ghadimi2016mini}).
Finally, the initial guess for the next approximating problem~\eqref{eqn:app} is set to be a solution corresponding to the smallest estimate of the risk level determined thus far. This initialization step is important for the next approximating problem in the sequence to be well-scaled at its initial guess - this is why we do not solve a `single tight approximating problem'~\eqref{eqn:app} that can be hard to initialize (cf.~Figure~\ref{fig:compsmoothfns}), but instead solve a sequence of approximations to approximate the solution of~\eqref{eqn:sp}.
Note that while the number of iterations $N$ of the inner projected stochastic subgradient loop is chosen at random as required by the theory in~\citet{davis2018stochastic}, a practical alternative is to choose a deterministic number of iterations $N$ and set $\bar{x}^i$ to be the solution at the final iterate.
Line~17 of Algorithm~\ref{alg:propalgo} heuristically updates the step length to ensure good practical performance.
Additionally, in contrast with the proposal of \texttt{Algorithm 2-RSPG-V} of~\citet{ghadimi2016mini} that chooses the solution over the multiple runs that is closest to stationarity, motivated by practical application, we return the solution corresponding to the smallest found risk level.
Because of these heuristic steps, the algorithm would require slight modification (removal of the heuristic step-size
update method and different choice of the solution from multiple runs) to be
assured to have the convergence guarantees of \citet{davis2018stochastic}. However, as our emphasis in this paper is on
deriving a practical algorithm and testing it empirically, we keep the algorithm description matching what we have
implemented. 
%We choose not to formally state a nuanced result outlining conditions under which a modification of our proposal converges to the efficient frontier of~\eqref{eqn:ccp} because we believe that Algorithm~\ref{alg:propalgoef} is more useful from a practical viewpoint.

\begin{algorithm}
\caption{Generating a point approximately on the efficient frontier}
\label{alg:propalgo}
{
%\fontsize{11}{14}\selectfont
\begin{algorithmic}[1]

\State \textbf{Input}: objective bound $\nu$, initial guess $\bar{x}^{0} \in \xv$, mini-batch size $M$, maximum number of iterations $N_{max}$, minimum and maximum number of `runs', $R_{min}$ and $R_{max} \geq R_{min}$, initial step lengths $\{\bar{\gamma}_{k}\}_{k=1}^{K}$, parameters for updating step length, run termination criteria, sample $\{\bar{\xi}^{l}\}_{l=1}^{N_{MC}}$ from $\P$ for estimating risk levels, and sequence of smoothing parameters $\{\bar{\tau}_k\}_{k=1}^{K}$.

\State \textbf{Output}: Smallest estimate of the risk level $\hat{\alpha}$ and corresponding solution $\hat{x} \in \xv$.

\State \textbf{Preprocessing}: determine smoothing parameters $\{\tau_k\}_{k=1}^{K}$ scaled at $\bar{x}^0$ using Algorithm~\ref{alg:scaling}.

\State \textbf{Initialize}: Best found solution $\hat{x} = \bar{x}^0$ and estimate of its risk level $\hat{\alpha}$ using the sample $\{\bar{\xi}^{l}\}_{l=1}^{N_{MC}}$.

\State \textbf{Optimization Phase}:

\For{approximation $k = 1$ to $K$}

\State Initialize run count $i = 0$ and step length $\gamma_k = \bar{\gamma}_k$.

\Repeat \Comment{Begin outer loop of `runs/replicates'}

\State Update $i \leftarrow i+1$, and initialize the first iterate $x^{1} := \bar{x}^{i-1}$.

\State Choose number of iterations $N$ uniformly at random from $\{1,\cdots,N_{max}\}$.

\For{iteration $l = 1$ to $N-1$} \Comment{Inner projected stochastic subgradient loop}

\State Draw an i.i.d. sample $\{\xi^{l,q}\}_{q=1}^{M}$ of $\xi$ from $\P$.

\State Estimate an element $G(x^l) \in \R^n$ of the subdifferential of the objective of~\eqref{eqn:app} with smoothing

\Statex \hspace*{0.65in} parameters $\tau_k$ at $x^l$ using the mini-batch $\{\xi^{l,q}\}_{q=1}^{M}$.

\State Update $x^{l+1} := \proj{x^{l} - \gamma_k G(x^l)}{\xv}$.

\EndFor

\State Set $\bar{x}^i = x^{N}$, and estimate its risk level $\bar{\alpha}^i$ using the sample $\{\bar{\xi}^{l}\}_{l=1}^{N_{MC}}$.

\State Update step length $\gamma_k$ using Algorithm~\ref{alg:updatesteplength}, and the incumbents $(\hat{\alpha},\hat{x})$ with $(\bar{\alpha}^i,\bar{x}^i)$ if $\hat{\alpha} > \bar{\alpha}^i$.
%\Statex \hspace*{0.42in} $\hat{\alpha} > \bar{\alpha}^i$.

\Until{run termination criteria are satisfied (see Algorithm~\ref{alg:updatesteplength})}

\State Update initial guess $\bar{x}^0 = \hat{x}$ for the next approximation.

\EndFor

\end{algorithmic}
}
\end{algorithm}

In the remainder of this section, we verify that the assumptions of~\citet{davis2018stochastic} hold to justify the use of their projected stochastic subgradient method for solving the sequence of approximations~\eqref{eqn:app}.
First, we verify that the objective function $\ph_k$ of~\eqref{eqn:app} with smoothing parameter $\tau_k$ is a weakly convex function on $\xv$.
Next, we verify that Assumption~(A3) of~\citet{davis2018stochastic} holds.
Note that a function $h:\R^d \to \overline{\R}$ is said to be $\rho_h$-weakly convex (for some $\rho_h \geq 0$) if $h + 0.5\rho_h\norm{\cdot}^2$ is convex on $\R^d$ (see Definition~4.1 of~\citet{drusvyatskiy2016efficiency} and the surrounding discussion).

\begin{proposition}
\label{prop:weakconvparam}
Suppose Assumptions~\ref{ass:confunc},~\ref{ass:phibasic} and~\ref{ass:phiadvanced} hold. Then $\ph_k(\cdot)$ is
$\bar{L}_k$-weakly convex on $\xv$, where $\bar{L}_k$ is the Lipschitz constant of the Jacobian $\bexpect{\nabla\phi_k\left(g(\cdot,\xi)\right)}$ on $\xv$.
%, where $\bar{L}_k$ is the Lipschitz constant of the Jacobian map $\expect{\nabla \phi_k\left(g(\cdot,\xi)\right)}$ on $\xv$.
\end{proposition}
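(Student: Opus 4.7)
The plan is to establish weak convexity of $\ph_k$ in three steps: (i) smoothness and weak convexity of each scalar composition $\phi_{k,j}(g_j(\cdot,\xi))$ for fixed $\xi$ and $j$; (ii) preservation of weak convexity under the pointwise maximum over $j$; and (iii) preservation under expectation over $\xi$. The output will be weak convexity of $\ph_k$ with some modulus, which we then identify with the stated $\bar{L}_k$.

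For step (i), fix $\xi \in \Xi$ and $j \in \{1,\ldots,m\}$. By Assumptions~\ref{ass:confunc}A-B,~\ref{ass:phibasic}A, and~\ref{ass:phiadvanced}A-B together with the chain rule, $\phi_{k,j}(g_j(\cdot,\xi))$ is $C^1$ on $\xv$ with gradient $d\phi_{k,j}(g_j(x,\xi))\,\nabla_x g_j(x,\xi)$; adding and subtracting $d\phi_{k,j}(g_j(x,\xi))\,\nabla_x g_j(y,\xi)$ inside the norm bounds this gradient's Lipschitz modulus by a measurable $L_j(\xi)$ built from $M'_{\phi,k,j}$, $L'_{\phi,k,j}$, $L_{g,j}(\xi)$, $L'_{g,j}(\xi)$, and $\|\nabla_x g_j\|$ (the latter controlled via Assumption~\ref{ass:confunc}C). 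A two-sided descent-lemma argument then yields that $\phi_{k,j}(g_j(\cdot,\xi))$ is $L_j(\xi)$-weakly convex on $\xv$.

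For steps (ii) and (iii), I use two simple preservation facts. If each $h_j$ is $L$-weakly convex, then $(\max_j h_j) + 0.5 L\|\cdot\|^2 = \max_j(h_j + 0.5 L\|\cdot\|^2)$ is a maximum of convex functions, hence convex, giving $L$-weak convexity of $\max_j h_j$; applied pointwise in $\xi$, this shows that $\max[\phi_k(g(\cdot,\xi))]$ is $L(\xi)$-weakly convex with $L(\xi) := \max_j L_j(\xi)$. Similarly, since $\max[\phi_k(g(\cdot,\xi))] + 0.5 L(\xi)\|\cdot\|^2$ is convex in $x$ for each $\xi$, integrating against $\P$ preserves convexity, so $\ph_k + 0.5\bexpect{L(\xi)}\|\cdot\|^2$ is convex on $\xv$, yielding $\bexpect{L(\xi)}$-weak convexity of $\ph_k$.

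The principal subtlety I expect is identifying this emerging modulus with $\bar L_k$, the Lipschitz constant of the expected Jacobian $\bexpect{\nabla\phi_k(g(\cdot,\xi))}$. The pointwise-then-expectation scheme naturally produces the expected pointwise Lipschitz constant, which in general only upper-bounds $\bar L_k$; the two coincide when $\bar L_k$ is interpreted as this expected pointwise modulus, in line with stochastic composite-optimization analyses (e.g.\ Davis--Drusvyatskiy). To recover the tighter constant directly, one can use the subgradient representation of Proposition~\ref{prop:clarkegradapprox}: write any $v \in \partial\ph_k(x)$ as $v = \bexpect{\sum_j \lambda_j(\xi)\nabla_x\phi_{k,j}(g_j(x,\xi))}$ with $\lambda(\xi)$ on the simplex supported on the active set, lower-bound $\ph_k(y) - \ph_k(x)$ by $\bexpect{\sum_j \lambda_j(\xi)[\phi_{k,j}(g_j(y,\xi)) - \phi_{k,j}(g_j(x,\xi))]}$, apply the descent lemma to the scalarized stochastic map $F(y,\xi) := \lambda(\xi)^\top\phi_k(g(y,\xi))$, and take expectation to produce a bound involving the Lipschitz modulus of an expected Jacobian, more closely tied to $\bar L_k$.
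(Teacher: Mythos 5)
Your argument is correct and, at bottom, is the same argument the paper invokes — the paper's entire proof is the single line ``Follows from Lemma~4.2 of \citet{drusvyatskiy2016efficiency}'', i.e., the fact that a composition of a Lipschitz convex outer function (here the $1$-Lipschitz $\max$) with a $C^1$ inner map having Lipschitz Jacobian is weakly convex with modulus (outer Lipschitz constant)$\times$(Jacobian Lipschitz constant), applied pointwise in $\xi$ and then integrated. What you have done is unpack that citation into an elementary, self-contained three-step proof (descent lemma for each $\phi_{k,j}(g_j(\cdot,\xi))$, preservation of weak convexity under pointwise max, preservation under expectation), which is a perfectly valid and arguably more transparent route; your steps (ii) and (iii) are exactly right. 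The subtlety you flag about the modulus is also genuine and worth stating plainly: both your argument and the cited lemma naturally produce an \emph{expectation of pointwise} Lipschitz moduli (your $\bexpect{\max_j L_j(\xi)}$, or $\bexpect{L_k(\xi)}$ for the full Jacobian), not literally the Lipschitz constant of the deterministic expected Jacobian $\bexpect{\nabla\phi_k(g(\cdot,\xi))}$ as the proposition's wording suggests — expectation and $\max$ do not commute, so the literal reading is not what either proof delivers. This looseness is in the paper's statement, not in your proof, and it is harmless downstream because Proposition~\ref{prop:lipconstcomp} only ever uses the upper bound $\bexpect{\bigl(\sum_j L^2_{k,j}(\xi)\bigr)^{1/2}}$, which dominates your constant as well; your closing suggestion of a subgradient-based refinement is therefore unnecessary. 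One small correction: the pointwise control of $\norm{\nabla_x g_j(y,\xi)}$ in your step (i) should come from Assumption~\ref{ass:confunc}A (Lipschitz continuity of $g_j(\cdot,\xi)$ gives $\norm{\nabla_x g_j(y,\xi)} \leq L_{g,j}(\xi)$), since Assumption~\ref{ass:confunc}C only bounds this quantity in expectation; with that substitution your $L_j(\xi)$ matches the bound $M^{'}_{\phi,k,j} L^{'}_{g,j}(\xi) + L^{'}_{\phi,k,j} L^2_{g,j}(\xi)$ of Proposition~\ref{prop:lipconstcomp}, and integrability of the modulus follows from $\bexpect{L^2_{g,j}(\xi)} < +\infty$ and $\bexpect{L^{'}_{g,j}(\xi)} < +\infty$.
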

\begin{proof}
Follows from Lemma~4.2 of~\citet{drusvyatskiy2016efficiency}.  \halmos
\end{proof}
The following result is useful for bounding $\ph_k$'s weak convexity parameter in terms of known constants.

\begin{proposition}
\label{prop:lipconstcomp}
Suppose Assumptions~\ref{ass:confunc},~\ref{ass:phibasic} and~\ref{ass:phiadvanced} hold. Then
\begin{enumerate}

\item For any $\xi \in \Xi$, the Lipschitz constant $L_{k,j}(\xi)$ of $\nabla \phi_{k,j}\left(g_j(\cdot,\xi)\right)$ on $\xv$ satisfies 
\[
L_{k,j}(\xi) \leq M^{'}_{\phi,k,j} L^{'}_{g,j}(\xi) + L^{'}_{\phi,k,j} L^2_{g,j}(\xi).
\]

\item The Lipschitz constant $\bar{L}_k$ of the Jacobian $\nabla\bexpect{\phi_k\left(g(\cdot,\xi)\right)}$ on $\xv$ satisfies 
\[
\bar{L}_k \leq \Bexpect{\bigl( \sum_{j=1}^{m} L^2_{k,j}(\xi) \bigr)^{\frac{1}{2}}}.
\]
\end{enumerate}
\end{proposition}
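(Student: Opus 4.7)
My plan is to handle the two parts separately, using the chain rule and a standard interchange of gradient and expectation.

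For Part 1, I will add and subtract a common term to apply a product-rule-style triangle inequality. By the chain rule (valid under Assumptions~\ref{ass:confunc} and~\ref{ass:phibasic}A), $\nabla \phi_{k,j}(g_j(x,\xi)) = d\phi_{k,j}(g_j(x,\xi))\, \nabla_x g_j(x,\xi)$. For any $x,y \in \xv$, I would write
\begin{align*}
\nabla \phi_{k,j}(g_j(y,\xi)) - \nabla \phi_{k,j}(g_j(x,\xi))
&= d\phi_{k,j}(g_j(y,\xi))\bigl[\nabla_x g_j(y,\xi)-\nabla_x g_j(x,\xi)\bigr] \\
&\quad + \bigl[d\phi_{k,j}(g_j(y,\xi)) - d\phi_{k,j}(g_j(x,\xi))\bigr]\nabla_x g_j(x,\xi),
\end{align*}
apply the triangle inequality, and bound the first term by $M'_{\phi,k,j} L'_{g,j}(\xi)\norm{y-x}$ using Assumptions~\ref{ass:phiadvanced}A and~\ref{ass:confunc}B, and the second term by $L'_{\phi,k,j} L_{g,j}(\xi) \cdot L_{g,j}(\xi)\norm{y-x}$ using Assumption~\ref{ass:phiadvanced}B together with Assumption~\ref{ass:confunc}A (applied twice: once to bound $\abs{g_j(y,\xi)-g_j(x,\xi)}$ and once to bound $\norm{\nabla_x g_j(x,\xi)}$, the latter following from Assumption~\ref{ass:confunc}A via the mean value theorem).

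For Part 2, I would first observe that the Jacobian $\nabla_x \phi_k(g(\cdot,\xi))$ is an $m \times n$ matrix whose $j$-th row is $\nabla \phi_{k,j}(g_j(\cdot,\xi))^{\!\top}$. Using the bound from Part 1 row-by-row,
\[
\norm*{\nabla_x \phi_k(g(y,\xi)) - \nabla_x \phi_k(g(x,\xi))}_F \;\leq\; \Bigl(\sum_{j=1}^m L^2_{k,j}(\xi)\Bigr)^{1/2} \norm{y-x},
\]
so the sample-path Jacobian is Lipschitz with the claimed constant. The next step is to interchange the gradient with the expectation, i.e., to show $\nabla \bexpect{\phi_k(g(x,\xi))} = \bexpect{\nabla_x \phi_k(g(x,\xi))}$; this is justified by dominated convergence using Assumptions~\ref{ass:confunc}A and~\ref{ass:phiadvanced}A (which together provide an integrable dominating function via $M'_{\phi,k,j} L_{g,j}(\xi)$, using $\bexpect{L_{g,j}^2(\xi)}<\infty$ from Assumption~\ref{ass:confunc}A). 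The conclusion then follows by pulling the norm inside the expectation:
\[
\norm*{\bexpect{\nabla_x \phi_k(g(y,\xi))-\nabla_x \phi_k(g(x,\xi))}} \;\leq\; \Bexpect{\Bigl(\sum_{j=1}^m L^2_{k,j}(\xi)\Bigr)^{1/2}} \norm{y-x}.
\]

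I expect the only non-routine step to be the interchange of $\nabla$ and $\mathbb{E}$; everything else is a straightforward application of the chain rule, triangle inequality, and Jensen's inequality. Since Part 1 and the row-wise assembly only use bounds already listed among Assumptions~\ref{ass:confunc},~\ref{ass:phibasic}, and~\ref{ass:phiadvanced}, the proof should be a few lines per part.
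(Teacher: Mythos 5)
Your proposal is correct and follows essentially the same route as the paper: the paper's Part 1 is exactly your product-rule decomposition (which it dismisses as ``standard arguments''), and its Part 2 likewise interchanges $\nabla$ and $\mathbb{E}$ (citing Theorem~7.44 of \citet{shapiro2009lectures}, justified by the Lipschitz bound from Lemma~\ref{lem:problipconst}) and then bounds the spectral norm of the Jacobian difference by its Frobenius norm row-by-row. The only detail the paper spells out that you leave implicit is the integrability of $\bigl(\sum_j L^2_{k,j}(\xi)\bigr)^{1/2}$ itself, which follows from $\sqrt{\sum_j L^2_{k,j}(\xi)} \leq \sum_j L_{k,j}(\xi)$ and Assumptions~\ref{ass:confunc}A--B.
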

\begin{proof}
\begin{enumerate}
\item Follows by using the chain rule $\nabla \phi_{k,j}\left(g_j(\cdot,\xi)\right) =
d\phi_{k,j}\left(g_j(\cdot,\xi)\right) \nabla g_j(\cdot,\xi)$ and bounding the Lipschitz constant of the above product
on $\xv$ using Assumptions~\ref{ass:confunc}A,~\ref{ass:confunc}B, and~\ref{ass:phiadvanced} and standard arguments.

\item From part one of Lemma~\ref{lem:problipconst} and Theorem~7.44 of~\citet{shapiro2009lectures}, we have
$\nabla\bexpect{\phi_k\left(g(x,\xi)\right)} = \bexpect{\nabla_x \phi_k\left(g(x,\xi)\right)}$, $\forall x \in \xv$.
The stated result then follows from the first part of this proposition and the fact that the Frobenius norm of a matrix
provides an upper bound on its spectral norm, where the existence of the quantity $\bexpect{\left( \sum_{j=1}^{m}
L^2_{k,j}(\xi) \right)^{\frac{1}{2}}}$ follows from the fact that $\sqrt{\sum_j L^2_{k,j}(\xi)} \leq \sum_j
L_{k,j}(\xi)$, Assumptions~\ref{ass:confunc}A and~\ref{ass:confunc}B, and Lebesgue's dominated convergence theorem.
 \halmos
\end{enumerate}
\end{proof}

We derive alternative results regarding the weak convexity parameter of the approximation $\ph_k$ when~\eqref{eqn:ccp} is used to model recourse formulations in Appendix~\ref{app:recourse}.
We now establish that Assumption~(A3) of~\citet{davis2018stochastic} holds.

\begin{proposition}
\label{prop:stochsubgradvar}
Suppose Assumptions~\ref{ass:confunc},~\ref{ass:phibasic}, and~\ref{ass:phiadvanced} hold.
Let $G(x,\xi)$ be a stochastic $B$-subdifferential element of $\ph_k(\cdot)$ at $x \in \xv$, i.e., $\bexpect{G(x,\xi)}
\in \partial_B \bexpect{\max\left[\phi_k\left( g(x,\xi)\right)\right]}$. Then
\[
\bexpect{\norm{G(x,\xi)}^2} \leq \uset{j \in \{1,\cdots,m\}}{\max} \left(M^{'}_{\phi,k,j}\right)^2 \sigma^2_{g,j}.
\]
\end{proposition}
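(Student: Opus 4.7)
The plan is to unpack the stochastic $B$-subdifferential element $G$ via the chain rule (as in the proof of Proposition~\ref{prop:clarkegradapprox}), apply the pointwise scalar derivative bound from Assumption~\ref{ass:phiadvanced}A, and then take expectations using the second-moment bound in Assumption~\ref{ass:confunc}C.

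Concretely, I would first observe that any stochastic element of $\partial_B\ph_k(x)$ can be represented in the form
\[
G(x,\xi) \;=\; d\phi_{k,l^{*}(\xi)}\bigl(g_{l^{*}(\xi)}(x,\xi)\bigr)\,\nabla_x g_{l^{*}(\xi)}(x,\xi),
\]
for some measurable selection $l^{*}(\xi)\in\A(x,\xi)$ of an active index of the inner $\max$; this is forced almost surely at points where the maximum has a unique active index, and is the canonical selection elsewhere. This is precisely the representation that underlies the description of the Clarke generalized gradient in Proposition~\ref{prop:clarkegradapprox}, and is legitimate because the continuous differentiability of each $\phi_{k,j}$ (Assumption~\ref{ass:phibasic}A) allows the ordinary chain rule to be applied.

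Next I would apply Assumption~\ref{ass:phiadvanced}A, which asserts $|d\phi_{k,j}(\cdot)|\le M^{'}_{\phi,k,j}$ on $\R$ for every $j$, to obtain the pointwise bound
\[
\norm{G(x,\xi)}^2 \;\le\; \bigl(M^{'}_{\phi,k,l^{*}(\xi)}\bigr)^2 \norm{\nabla_x g_{l^{*}(\xi)}(x,\xi)}^2.
\]

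Finally I would take expectations. The main obstacle is the coupling introduced by the random active index $l^{*}(\xi)$: $M^{'}_{\phi,k,l^{*}(\xi)}$ and $\nabla_x g_{l^{*}(\xi)}$ share the same random selector, so neither factor can simply be pulled out of the expectation, and Assumption~\ref{ass:confunc}C only controls $\bexpect{\norm{\nabla_x g_j(x,\xi)}^2}$ for each \emph{fixed} $j$. I would address this by partitioning $\Xi$ into the disjoint measurable events $A_l := \{\xi : l^{*}(\xi)=l\}$ for $l = 1,\ldots,m$, splitting the expectation as
\[
\bexpect{\norm{G(x,\xi)}^2} \;\le\; \sum_{l=1}^m \bigl(M^{'}_{\phi,k,l}\bigr)^2\,\bexpect{\1{A_l}\,\norm{\nabla_x g_l(x,\xi)}^2},
\]
then bounding each summand by $\Pb(A_l)\cdot\max_{j}\bigl[(M^{'}_{\phi,k,j})^2\,\sigma^2_{g,j}\bigr]$ by invoking Assumption~\ref{ass:confunc}C on the marginal second-moment bound $\bexpect{\norm{\nabla_x g_l(x,\xi)}^2}\le\sigma^2_{g,l}$, and concluding via $\sum_l \Pb(A_l)=1$.
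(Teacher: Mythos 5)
Your first two steps --- writing $G(x,\xi) = d\phi_{k,l^{*}(\xi)}\bigl(g_{l^{*}(\xi)}(x,\xi)\bigr)\nabla_x g_{l^{*}(\xi)}(x,\xi)$ for a measurable active-index selection and then applying the derivative bound of Assumption~\ref{ass:phiadvanced}A --- are exactly the content of the paper's (one-line) proof, which simply asserts that the result follows from this representation together with Assumptions~\ref{ass:confunc}C and~\ref{ass:phiadvanced}. You have also correctly put your finger on the one genuinely delicate point, namely the coupling between the random active index and the gradient, which the paper's proof passes over in silence.

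However, your resolution of that coupling does not work. The inequality $\bexpect{\1{A_l}\,\norm{\nabla_x g_l(x,\xi)}^2}\le\Pb\{A_l\}\,\sigma^2_{g,l}$ is a bound on the \emph{conditional} second moment of $\norm{\nabla_x g_l(x,\xi)}$ given the event $A_l$, whereas Assumption~\ref{ass:confunc}C only bounds the unconditional second moment. Since $\1{A_l}$ and $\norm{\nabla_x g_l(x,\xi)}^2$ are both functions of the same $\xi$, they can be positively correlated: nothing prevents $\norm{\nabla_x g_l(x,\xi)}$ from being large precisely on those realizations of $\xi$ for which constraint $l$ attains the maximum. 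In the extreme case where essentially all of the mass of $\norm{\nabla_x g_l(x,\xi)}^2$ sits on $A_l$ for each $l$, your partition argument yields $\sum_{l=1}^m \bigl(M^{'}_{\phi,k,l}\bigr)^2\sigma^2_{g,l}$ rather than $\max_j \bigl(M^{'}_{\phi,k,j}\bigr)^2\sigma^2_{g,j}$, i.e.\ a sum in place of a max, which can be worse by a factor of $m$. What Assumption~\ref{ass:confunc}C does give you cleanly is $\bexpect{\1{A_l}\,\norm{\nabla_x g_l(x,\xi)}^2}\le\bexpect{\norm{\nabla_x g_l(x,\xi)}^2}\le\sigma^2_{g,l}$, hence the sum bound; to recover the stated max bound one needs either a pointwise (rather than in-expectation) reading of the moment condition or a single uniform bound of the form $\bexpect{\max_j\norm{\nabla_x g_j(x,\xi)}^2}\le\sigma^2_g$. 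The paper's own proof makes the same leap you were trying to avoid, so the difficulty is arguably inherited from the statement itself, but as written your last step is not justified by the stated assumptions.
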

\begin{proof}
Follows from Assumptions~\ref{ass:confunc}C and~\ref{ass:phiadvanced} and the fact that $G(x,\xi) =
d\phi_{k,j}(g_j(x,\xi)) \nabla_x g_j(x,\xi)$ for some active constraint index $j \in \{1,\cdots,m\}$ such that
$\max\left[\phi_k\left( g(x,\xi)\right)\right] = \phi_{k,j}\left( g_j(x,\xi)\right)$.  \halmos
\end{proof}

\subsection{Estimating the parameters of Algorithm~\ref{alg:propalgoef}}
\label{subsec:algoparams}

This section outlines our proposal for estimating some key parameters of Algorithm~\ref{alg:propalgoef}.
We present pseudocode for determining suitable smoothing parameters and step lengths, and for deciding when to terminate Algorithm~\ref{alg:propalgo} with a point on our approximation of the efficient frontier.

Algorithm~\ref{alg:scaling} rescales the sequence of smoothing parameters $\{\bar{\tau}_k\}$ based on the values assumed by the constraints $g$ at a reference point $\bar{x}$ and a Monte Carlo sample of the random vector $\xi$. 
The purpose of this rescaling step is to ensure that the initial approximation~\eqref{eqn:app} with smoothing parameter
$\tau_1$ is `well-scaled' at the initial guess $\bar{x}$. 
By `well-scaled', we mean that the smoothing parameter $\tau_1$ is chosen (neither too large, nor too small) such that the realizations of the scaled constraint functions $\bar{g}_j(\bar{x},\xi) := \frac{g_j(\bar{x},\xi)}{\tau_{1,j}}$, $j \in \{1,\cdots,m\}$, are mostly supported on the interval $[-1,1]$ but not concentrated at zero.
This rescaling ensures that stochastic subgradients at $\bar{x}$ are not too small in magnitude.
If $\bar{x}$ is `near' a stationary solution, then we can hope that the approximation remains well-scaled over a region of interest and that our `first-order algorithm' makes good progress in a reasonable number of iterations.
Note that the form of the scaling factor $\beta_j$ in Algorithm~\ref{alg:scaling} is influenced by our choice of the smoothing functions in Example~\ref{exm:oursmoothapprox}.
Algorithm~\ref{alg:steplength} uses the step length rule in~\citet[Page~6]{davis2018stochastic} with the trivial bound `$R = 1$' and sample estimates of the weak convexity parameter of $\ph_k$ and the parameter $\sigma^2_k$ related to its stochastic subgradient.
Since stochastic approximation algorithms are infamous for their sensitivity to the choice of step lengths (see Section~2.1 of~\citet{nemirovski2009robust}, for instance), Algorithm~\ref{alg:updatesteplength} prescribes heuristics for updating the step length based on the progress of Algorithm~\ref{alg:propalgo} over multiple runs. These rules increase the step length if insufficient progress has been made, 
%over the last few runs, 
and decrease the step length if the risk level has increased too significantly. 
Algorithm~\ref{alg:updatesteplength} also suggests the following heuristic for terminating the `runs loop' within Algorithm~\ref{alg:propalgo}: terminate either if the upper limit on the number of runs is hit, or if insufficient progress has been made over the last few runs.

\begin{algorithm}
\caption{Scaling the smoothing parameters}
\label{alg:scaling}
{
%\fontsize{11}{14}\selectfont
\begin{algorithmic}[1]

\State \textbf{Input}: reference point $\bar{x} \in X$, and sequence of smoothing parameters $\{\bar{\tau}_k\}_{k=1}^{K}$.

\State \textbf{Set algorithmic parameters}: number of samples $N_{scale}$, scaling tolerance $s_{tol} > 0$, and scaling factor $\omega > 0$.

\State \textbf{Output}: smoothing parameters $\{\tau_k\}_{k=1}^{K}$ scaled at $\bar{x}$.

\State Draw a sample $\{\xi^{i}\}_{i=1}^{N_{scale}}$ of $\xi$ from $\P$.

\State For each $j \in \{1,\cdots,m\}$ and $k$, set $\tau_{k,j} = \beta_j \bar{\tau}_{k}$, where $\beta_j := \omega \max\left\lbrace \median{\abs{g_j(\bar{x},\xi^i)}}, s_{tol} \right\rbrace$.

\end{algorithmic}
}
\end{algorithm}

\begin{algorithm}
\caption{Determining initial step length}
\label{alg:steplength}
{
%\fontsize{11}{14}\selectfont
\begin{algorithmic}[1]

\State \textbf{Input}: objective bound $\nu$, reference point $\bar{x} \in \xv$, mini-batch size $M$, maximum number of iterations $N_{max}$, minimum number of runs $R_{min}$, and smoothing parameters $\{\tau_k\}_{k=1}^{K}$.

\State \textbf{Set algorithmic parameters}: number of samples for estimating weak convexity parameter $N_{wc}$, sampling radius $r > 0$, number of samples for estimating `variability' of stochastic gradients $N_{var}$, and batch size for computing expectations $N_{batch}$.

\State \textbf{Output}: initial step lengths $\{\bar{\gamma}_{k}\}_{k=1}^{K}$.

\State Estimate the weak convexity parameter $\rho_k$ of the objective of~\eqref{eqn:app} with smoothing parameter $\tau_k$ using i.~$N_{wc}$ samples of pairs of points in $\xv \cap B_{r}(\bar{x})$, and ii.~mini-batches of $\xi$ for estimating stochastic gradients (see Section~\ref{subsec:practcon}).

\State Estimate $\sigma^2_k := \max_{x \in \xv} \bexpect{\norm{G(x,\xi)}^2}$ using i.~$N_{var}$ sample points $x \in \xv \cap B_{r}(\bar{x})$, and ii.~stochastic $B$-subdifferential elements $G(x,\xi)$ of the subdifferential of the objective of~\eqref{eqn:app} with smoothing parameter $\tau_k$ and mini-batches of $\xi$ (see Section~\ref{subsec:practcon}).

\State Set $\bar{\gamma}_{k} := \dfrac{1}{\sqrt{\rho_k \sigma^2_k (N_{max} + 1) R_{min}}}$.

\end{algorithmic}
}
\end{algorithm}

\begin{algorithm}[ht!]
\caption{Updating step length and checking termination}
\label{alg:updatesteplength}
{
%\fontsize{11}{14}\selectfont
\begin{algorithmic}[1]

\State \textbf{Input}: current run number $r$, minimum and maximum number of `runs', $R_{min}$ and $R_{max}$, step length $\gamma_k$, estimated risk levels $\{\bar{\alpha}^i\}_{i=1}^{r}$, and the risk level at the initial guess $\hat{\alpha}^0$.
% (where $\hat{\alpha}^0$ is the risk level at the initial guess).

\State \textbf{Set algorithmic parameters}: step length checking frequency $N_{check}$, run termination checking parameter $N_{term}$, relative increase in risk level $\delta_1 > 0$ beyond which we terminate, relative increase in risk level $\delta_2 > \delta_1$ beyond which we decrease the step length by factor $\gamma_{decr} > 1$, 
and step length increase factor $\gamma_{incr} > 1$.
%relative decrease in risk level $\delta_3 > 0$ beyond which we increase the step length by factor $\gamma_{incr} > 1$.

\State Construct the sequence $\{\hat{\alpha}^i\}_{i=0}^{r}$, with $\hat{\alpha}^i := \min\{\hat{\alpha}^{i-1},\bar{\alpha}^i\}$, of smallest risk levels at the end of each run thus far.

\State \textbf{Updating step length}:

\If{run number is divisible by $N_{check}$}

\State Determine maximum relative `decrease' in the estimated risk level
\[
\hat{\delta} := \uset{i \in \{1,\cdots,N_{check}\}}{\max} \frac{\hat{\alpha}^{r- \scaleto{N_{check}}{4pt}} - \bar{\alpha}^{r+1-i}}{\hat{\alpha}^{r-\scaleto{N_{check}}{4pt}}}
\]
\Statex \hspace*{0.2in} over the past $N_{check}$ runs, relative to the smallest known estimate of the risk level $N_{check}$ runs ago.

%\If{$\hat{\delta} \in (-\delta_1,\delta_3)$}
\If{$\hat{\delta} \geq -\delta_1$}
\State Increase step length by factor $\gamma_{incr}$. \Comment{Insufficient decrease in estimated risk level}
\ElsIf{$\hat{\delta} \leq -\delta_2$}
\State Decrease step length by factor $\gamma_{decr}$. \Comment{Unacceptable increase in estimated risk level}
\EndIf

\EndIf

\State \textbf{Termination check}:

\If{number of runs equals $R_{max}$}
\State Terminate.
\ElsIf{number of runs is at least $R_{min}$}
\State Determine maximum relative `decrease' in the estimated risk level over the past $N_{term}$ runs
\[
\hat{\delta} := \uset{i \in \{1,\cdots,N_{term}\}}{\max} \frac{\hat{\alpha}^{r- \scaleto{N_{term}}{4pt}} - \bar{\alpha}^{r+1-i}}{\hat{\alpha}^{r-\scaleto{N_{term}}{4pt}}}.
\]
\Statex \hspace*{0.2in} If $\hat{\delta} < -\delta_1$, terminate due to insufficient decrease in risk level.
\EndIf

%\Until{termination}

\end{algorithmic}
}
\end{algorithm}

\subsection{Other practical considerations}
\label{subsec:practcon}

We list some other practical considerations for implementing Algorithm~\ref{alg:propalgoef} below.

\paragraph{Setting an initial guess:} The choice of the initial point $\hat{x}^0$ for Algorithm~\ref{alg:propalgoef} is important especially because Algorithm~\ref{alg:propalgo} does not guarantee finding global solutions to~\eqref{eqn:app}. Additionally, a poor choice of the initial bound $\bar{\nu}^0$ can lead to excessive effort expended on uninteresting regions of the efficient frontier. We propose to initialize $\hat{x}^0$ and $\bar{\nu}^0$ by solving a set of tuned scenario approximation problems. For instance, Algorithm~\ref{alg:scenapprox} in the appendix can be solved with $M = 1$, $R = 1$, and a tuned sample size $N_1$ (tuned such that the output $(\bar{\nu}^{1,1},\bar{\alpha}^{1,1},\bar{x}^{1,1})$ of this algorithm corresponds to an interesting region of the efficient frontier) to yield the initial guess $\hat{x}^0 = \bar{x}^{1,1}$ and $\bar{\nu}^0 = \bar{\nu}^{1,1}$. Note that it may be worthwhile to solve the scenario approximation problems to global optimality to obtain a good initialization $(\hat{x}^0,\bar{\nu}^0)$, e.g., when the number of scenarios $N_1$ is not too large (in which case they may be solved using off-the-shelf solvers/tailored decomposition techniques in acceptable computation times).
Case study~\ref{case:resource} in Section~\ref{sec:computexp} provides an example where globally optimizing the scenario approximation problems to obtain an initial guess yields a significantly better approximation of the efficient frontier.

\paragraph{Computing projections:} Algorithm~\ref{alg:propalgoef} requires projecting onto the sets $\xv$ for some $\nu
\in \R$  multiple times while executing Algorithm~\ref{alg:propalgo}. Thus, it may be beneficial to implement tailored projection subroutines~\citep{censor2012effectiveness}, especially for sets $\xv$ with special structures~\citep{condat2016fast} (also see Case study~\ref{case:portfolio_var} in Section~\ref{sec:computexp}). If the set $\xv$ is a polyhedron, then projection onto $\xv$ may be carried out by solving a quadratic program.

\paragraph{Computing stochastic subgradients:} Given a point $\bar{x} \in \xv$ and a mini-batch $\{\xi^l\}_{l=1}^{M}$ of the random vector $\xi$, a stochastic element $G(\bar{x},\xi)$ of the $B$-subdifferential of the objective of~\eqref{eqn:app} with smoothing parameter $\tau_k$ at $\bar{x}$ can be obtained as
\[
G(\bar{x},\xi) = \dfrac{1}{M} \sum_{l=1}^{M} d\phi_{k,j(l)}\left(g_{j(l)}(\bar{x},\xi^l)\right) \nabla_x g_{j(l)}(\bar{x},\xi^l),
\]
where for each $l \in \{1,\cdots,M\}$, $j(l) \in \{1,\cdots,m\}$ denotes a constraint index satisfying $\max\left[\phi_k\left(g(\bar{x},\xi^l)\right)\right] = \phi_{k,j(l)}\left(g_{j(l)}(\bar{x},\xi^l)\right)$. Note that sparse computation of stochastic subgradients can speed Algorithm~\ref{alg:propalgoef} up significantly. 

\paragraph{Estimating risk levels:} Given a candidate solution $\bar{x} \in \xv$ and a sample $\{\bar{\xi}^l\}_{l=1}^{N_{MC}}$ of the random vector $\xi$, a stochastic upper bound on the risk level $p(\bar{x})$ can be obtained as
\[
\bar{\alpha} = \uset{\alpha \in [0,1]}{\max} \Set{\alpha}{\sum_{i=0}^{N_{viol}} \binom{N_{MC}}{i} \alpha^i (1-\alpha)^{N_{MC} - i} = \delta},
\]
where $N_{viol}$ is the cardinality of the set $\Set{l}{g(\bar{x},\bar{\xi}^l) \not\leq 0}$ and $1-\delta$ is the required confidence level, see~\citet[Section~4]{nemirovski2006convex}. 
%Note that the above one-dimensional root finding problem can be solved using the bisection method with initial lower and upper bounds
%\[
%\alpha^L = \max\left\lbrace \dfrac{N_{viol} - \displaystyle\sqrt{-0.5N_{MC}\log(\delta)}}{N_{MC}}, 0\right\rbrace, \quad \alpha^U = \min\left\lbrace\dfrac{N_{viol} + \displaystyle\sqrt{-0.5N_{MC}\log(\delta)}}{N_{MC}}, 1\right\rbrace
%\] 
%that are obtained from tail bounds on the binomial distribution. 
Since checking the satisfaction of $N_{MC}$ constraints at the end of each run in Algorithm~\ref{alg:propalgo} may be time consuming (especially for problems with recourse structure), we use a smaller sample size (determined based on the risk lower bound $\alpha_{low}$) to estimate risk levels during the course of Algorithm~\ref{alg:propalgo} and only use all $N_{MC}$ samples of $\xi$ to estimate the risk level $\{\bar{\alpha}^i\}$ of the final solutions $\{\hat{x}^i\}$ in Algorithm~\ref{alg:propalgoef}.
If our discretization of the efficient frontier obtained from Algorithm~\ref{alg:propalgoef} consists of $N_{EF}$ points each of whose risk levels is estimated using a confidence level of $1-\delta$, then we may conclude that our approximation of the efficient frontier is `achievable' with a confidence level of at least $1 - \delta N_{EF}$ using Bonferroni's inequality.

\paragraph{Estimating weak convexity parameters:} Proposition~\ref{prop:weakconvparam} shows that the Lipschitz constant
$\bar{L}_k$ of the Jacobian $\bexpect{\nabla\phi_k\left(g(\cdot,\xi)\right)}$ on $\xv$ provides a conservative estimate
of the weak convexity parameter $\rho_k$ of $\ph_k$ on $\xv$. Therefore, we use an estimate of the Lipschitz constant
$\bar{L}_k$ as an estimate of $\rho_k$. To avoid overly conservative estimates, we restrict the estimation of
$\bar{L}_k$ to a neighborhood of the reference point $\bar{x}$ in Algorithm~\ref{alg:steplength} to get at the local
Lipschitz constant of the Jacobian $\bexpect{\nabla\phi_k\left(g(\cdot,\xi)\right)}$.

\paragraph{Estimating $\sigma^2_k$:} For each point $x$ sampled from $\xv$, we compute multiple realizations of the
mini-batch stochastic subdifferential element $G(x,\xi)$ outlined above to estimate $\bexpect{\norm{G(x,\xi)}^2}$.
Once again, we restrict the estimation of $\sigma^2_k$ to a neighborhood of the reference point $\bar{x}$ in Algorithm~\ref{alg:steplength} to get at the local `variability' of the stochastic subgradients.

\paragraph{Estimating initial step lengths:} Algorithm~\ref{alg:steplength} estimates initial step lengths for the
sequence of approximations~\eqref{eqn:app} by estimating the parameters $\{\rho_k\}$ and $\{\sigma^2_k\}$, which in turn
involve estimating subgradients and the Lipschitz constants of $\{\ph_k\}$. Since the numerical conditioning of the
approximating problems~\eqref{eqn:app} deteriorates rapidly as the smoothing parameters $\tau_k$ approach zero (see
Figure~\ref{fig:compsmoothfns}), obtaining good estimates of these constants via sampling becomes challenging when $k$
increases. To circumvent this difficulty, we only estimate the initial step length $\bar{\gamma}_1$ for the initial approximation~\eqref{eqn:app} with smoothing parameter $\tau_1$ by sampling, and propose the conservative initialization $\bar{\gamma}_k := \left(\frac{\tau_k}{\tau_1}\right)^2\bar{\gamma}_1$ for $k > 1$ (see Propositions~\ref{prop:oursmoothapproxconst},~\ref{prop:weakconvparam},~\ref{prop:lipconstcomp}, and~\ref{prop:stochsubgradvar} for a justification).

\exclude{
\subsection{Discussion of the proposed approach}
\label{subsec:limitapp}

%Some limitations of our proposal are listed below (we do not claim that this list is exhaustive).
Generalizing our approach to the case of multiple sets of joint chance constraints is nontrivial. 
For ease of exposition, suppose we have the $\abs{\mathcal{J}}$ sets of joint chance constraints $\prob{g_j(x,\xi) \leq 0, \: \: \forall j \in J} \geq 1 - \alpha_J$, $\forall J \in \mathcal{J}$, instead of the single joint chance constraint $\prob{g_j(x,\xi) \leq 0, \:\: \forall j \in J} \geq 1 - \alpha$.
If the decision maker wishes to impose distinct risk levels $\alpha_J$ for each set of joint chance constraints $J \in \mathcal{J}$, the natural analogue of our biobjective viewpoint would necessitate constructing the efficient frontier of a multiobjective optimization problem, which is considerably harder.
If we assume that the risk levels are the same across the different sets of joint chance constraints (which may be a reasonable assumption in practice), we can write down the following analogue of Problem~\eqref{eqn:app} that is a harder-to-solve stochastic compositional minimization problem~\citep{wang2017stochastic}:
\[
\uset{x \in \xv}{\min} \: \uset{J \in \mathcal{J}}{\max}\left\lbrace\bexpect{\max_{j \in J}\left[\phi_j\left(g_j(x,\xi)\right)\right]}\right\rbrace.
\]
A slightly more general approach is to replace the term $\bexpect{\max_{j \in
J}\left[\phi_j\left(g_j(x,\xi)\right)\right]}$ with scaled versions $\pi_J \bexpect{\max_{j \in J}\left[\phi_j\left(g_j(x,\xi)\right)\right]}$ for positive constants $\pi_J$, which weights the relative importance of the sets of joint chance constraints in $\J$.
Note that the objective of the minimization above is also weakly convex under mild assumptions~\citep{nurminskii1973quasigradient}.
%, so there may be efficient ways to optimize it approximately (even though getting unbiased stochastic gradients of this objective is not straightforward).
One option for determining a provably approximate stationary point to the above `minimax stochastic program' is to convert it into the following weakly convex-concave saddle point problem and use the proximally guided stochastic mirror descent method of~\citet{rafique2018non} to solve it (cf.~Section~3.2 of~\citet{nemirovski2009robust}):
\[
\uset{x \in \xv}{\min} \: \uset{\lambda \in \Delta_{\mathcal{J}}}{\max} \: \sum_{J \in \J} \lambda_J \bexpect{\max_{j \in J}\left[\phi_j\left(g_j(x,\xi)\right)\right]},
\]
where $\Delta_{\mathcal{J}} := \Set{\lambda \in \R^{\abs{\J}}_+}{\sum_{J \in \J} \lambda_J = 1}$ is the standard simplex.

One potential avenue for reducing effort spent on projections in Algorithm~\ref{alg:propalgoef} is to use a random constraint projection technique~\citep{wang2016stochastic}, although the associated theory will have to be developed for our nonsmooth nonconvex setting. 
Our proposal does not handle deterministic nonconvex constraints effectively. Although such constraints can theoretically be incorporated as part of the chance constraint functions $g$, this may not be a practically useful option, particularly when we have deterministic nonconvex equality constraints. Additionally, including deterministic constraints through $g$ can lead to violation of Assumption~\ref{ass:probzero} and destroy continuity of the probability function $p$.
A possibly practical option for incorporating deterministic nonconvex constraints in a more natural fashion is through carefully designed penalty-based stochastic approximation methods~\citep{wang2017penalty}. 
Note that Algorithm~\ref{alg:propalgoef} does not accommodate discrete decisions as well.
We also remark that stochastic approximation schemes different from the projected stochastic subgradient method~\citep{duchi2018stochastic,davis2019stochastic} could be used to solve Problem~\eqref{eqn:app}.

Finally, while the theoretical results in Section~\ref{sec:smoothapp} establish conditions under which solutions of the approximations~\eqref{eqn:app} converge to a solution of the stochastic program~\eqref{eqn:sp} in the limit of the smoothing parameters, Algorithm~\ref{alg:propalgoef} only considered a finite sequence of approximating problems with smoothing parameters $\{\tau_k\}_{k=1}^{K}$. This is because the numerical conditioning of the approximations~\eqref{eqn:app} deteriorates rapidly as the smoothing parameters approach their limiting value. Unfortunately, this seems to be a fundamental shortcoming of smoothing-based approaches in general since approximating the step function using a well-conditioned sequence of smooth approximations is impossible (cf.~the discussion in Sections~4 and~5 of~\citet{cao2018sigmoidal}).
}

\section{Computational study}
\label{sec:computexp}

We present implementation details and results of our computational study in this section. We use the abbreviation `EF' for the efficient frontier throughout this section.

\subsection{Implementation details}
\label{subsec:impldet}

The following parameter settings are used for testing our stochastic approximation method:
\begin{itemize}[itemsep=0em]
\item Algorithm~\ref{alg:propalgoef}: Obtain initial guess $\hat{x}^0$ and initial objective bound $\bar{\nu}^0$ using Algorithm~\ref{alg:scenapprox} in Appendix~\ref{subsec:scenapproximpl} with $M = 1$, $R = 1$, and $N_1 = 10$. Set $\tilde{\nu} = 0.005\abs{\bar{\nu}^0}$ (we assume that $\bar{\nu}^0 \neq 0$ for simplicity) and $\alpha_{low} = 10^{-4}$ unless otherwise specified. In addition, set $M = 20$, $N_{max} = 1000$, $R_{min} = 10$, $R_{max} = 50$, $K = 3$, and $\{\bar{\tau}_k\} = \{(0.1)^{k-1}\}$ (i.e., $\tau_c = 0.1$ in Equation~\eqref{eqn:smoothingapproxfactor}).

\item Algorithm~\ref{alg:scaling}: $N_{scale} = 10^4$, $s_{tol} = 10^{-6}$, and $\omega = 1$.

\item Algorithm~\ref{alg:steplength}: $N_{wc} = N_{var} = 200$, $N_{batch} = 20$, and $r = 0.1\norm{\bar{x}}$ (assuming that $\bar{x} \neq 0$).

\item Algorithm~\ref{alg:updatesteplength}: $N_{check} = 3$, $N_{term} = 5$, $\delta_1 = 10^{-4}$, $\delta_2 = 10^{-2}$, and $\gamma_{incr} = \gamma_{decr} = 10$.

\item Projecting onto $\xv$: Case study~\ref{case:portfolio}: using the algorithm of~\citet{condat2016fast}; Case study~\ref{case:portfolio_var}: numerical solution of the KKT conditions (see Appendix~\ref{subsec:tailproj_portvar} for details); Case studies~\ref{case:normopt} and~\ref{case:resource} and Case study~\ref{case:normopt2} in the appendix: by solving a quadratic program.

\item Estimating risk level $p(x)$: Case studies~\ref{case:portfolio} and~\ref{case:portfolio_var}: analytical solution; Case studies~\ref{case:normopt} and~\ref{case:resource}: Monte Carlo estimate using $N_{MC} = 10^5$ and reliability level $\delta = 10^{-6}$; Case study~\ref{case:normopt2} in the appendix: numerical estimation of the exact risk level based on~\citet{ruben1962probability}.

\item Case study~\ref{case:portfolio_var}: $\tilde{\nu} = 0.02\abs{\bar{\nu}^0}$

\item Case studies~\ref{case:normopt} and~\ref{case:resource}: $\alpha_{low} = 5\times 10^{-4}$.% (since it is memory intensive to store a large Monte Carlo sample for Case study~\ref{case:normopt}, and it is time intensive to estimate the risk level for a large Monte Carlo sample for Case study~\ref{case:resource}).
\end{itemize}
%
%As discussed in Section~\ref{sec:introduction}, the bounds afforded by the papers~\citet{calafiore2005uncertain,campi2011sampling}, and~\citet{luedtke2008sample} on the number of samples required to `ensure' feasibility for~\eqref{eqn:ccp} are typically quite conservative. 
We compare the results of our proposed approach with a tuned application of scenario approximation that enforces a
predetermined number of random constraints, solves the scenario problem to local/global optimality, and determines an
a~posteriori estimate of the risk level at the solution of the scenario problem using an independent Monte Carlo sample
(see Section~\ref{subsec:practcon}) to estimate a point on the EF of~\eqref{eqn:ccp}. Algorithm~\ref{alg:scenapprox} in
Appendix~\ref{app:algos} presents pseudocode for approximating the EF of~\eqref{eqn:ccp} by solving many scenario
approximation problems using an iterative (cutting-plane) approach. 

Our codes are written in Julia~0.6.2~\citep{bezanson2017julia}, use Gurobi~7.5.2~\citep{gurobi} to solve linear, quadratic, and second-order cone programs, use IPOPT~3.12.8~\citep{wachter2006implementation} to solve nonlinear scenario approximation problems (with MUMPS~\citep{amestoy2000mumps} as the linear solver), and use SCIP~6.0.0~\citep{gleixner2018} to solve nonlinear scenario approximation problems to global optimality (if necessary). The above solvers were accessed through the JuMP~0.18.2 modeling interface~\citep{dunning2017jump}.
All computational tests\footnote{The scenario approximation problems solved to global optimality using SCIP in Case study~\ref{case:resource} were run on a different laptop running Ubuntu 16.04 with a 2.6 GHz four core Intel i7 CPU, 8 GB of RAM due to interfacing issues on Windows.} were conducted on a Surface Book 2 laptop running Windows 10 Pro with a $1.90$~GHz four core Intel~i7 CPU, $16$~GB of RAM.

\subsection{Numerical experiments}
\label{subsec:numexpts}

We tested our approach on four test cases from the literature. We present basic details of the test instances below.
The \texttt{Julia} code and data for the test instances are available at \url{https://github.com/rohitkannan/SA-for-CCP}.
For each test case, we compared the EFs generated by the stochastic approximation method against the solutions obtained
using the tuned scenario approximation method and, if available, the analytical EF. Because the output of our proposed
approach is random, we present enclosures of the EF generated by our proposal over ten different replicates in
Appendix~\ref{app:computexp} for each case study. These results indicate that the output of the proposed method does not vary significantly across the replicates.

\begin{casestudy}
\label{case:portfolio}
This portfolio optimization instance is based on Example~2.3.6 in~\citet{ben2009robust}, and includes a single individual linear chance constraint:
\begin{alignat*}{2}
&\uset{t, \: x \in \Delta_N}{\max} \:\: && t \\
&\quad \text{s.t.} && \prob{\tr{\xi} x \geq t} \geq 1 - \alpha,
\end{alignat*}
where $\Delta_N := \Set{y \in \R^N_{+}}{\sum_i y_i = 1}$ is the standard simplex, $x_i$ denotes the fraction of investment in stock $i \in \{1,\cdots,N\}$, $\xi \sim \P := \mathcal{N}(\mu,\Sigma)$ is a random vector of returns with joint normal probability distribution, $\mu_i = 1.05 + 0.3\frac{N-i}{N-1}$ and $\sigma_i = \frac{1}{3}\left(0.05 + 0.6\frac{N-i}{N-1}\right)$, $i \in \{1,\cdots,N\}$, and $\Sigma = \text{diag}(\sigma^2_1,\cdots,\sigma^2_N)$. We consider the instance with number of stocks $N = 1000$.
We assume the returns $\xi$ are normally distributed so that we can benchmark our approach against an analytical solution for the true EF when the risk level $\alpha \leq 0.5$ (e.g., see~\citet[Theorem~10.4.1]{prekopa2013stochastic}).

Figure~\ref{fig:portfolio} compares a typical EF obtained using our approach against the analytical EF and the solutions generated by the tuned scenario approximation algorithm.
Our proposal is able to find a very good approximation of the true EF, whereas the scenario approximation method finds
solutions that can be improved either in objective or risk. Our proposed approach took $388$ seconds on average (and a maximum of $410$ seconds) to approximate the EF using $26$ points, whereas the tuned scenario approximation method took a total of $6900$ seconds to generate its $1000$ points in Figure~\ref{fig:portfolio}.
Note that even though the above instance of~\eqref{eqn:ccp} does not satisfy Assumption~\ref{ass:xcompact} as written (because $X_{\nu} := \Set{(x,t)}{x \in \Delta_N, t \geq \nu}$ is not compact), we can essentially incorporate the upper bound $t \leq \nu$ within the definition of the set $X_{\nu}$ because the solution to Problem~\eqref{eqn:app} always satisfies $t^* = \nu$.

Since existing smoothing-based approaches provide the most relevant comparison, we compare our results with one such approach from the literature. We choose the smoothing-based approach of~\citet{cao2018sigmoidal} for comparison because: i.~they report encouraging computational results in their work relative to other such approaches, and ii.~they have made their implementation available.
Table~\ref{tab:sigvar} summarizes typical results of the sigmoidal smoothing approach of~\citet{cao2018sigmoidal} when applied to the above instance with a specified risk level of $\alpha = 0.01$, with a varying number of scenarios, and with different settings for the scaling factor $\gamma$ of the sigmoidal approximation method (see Algorithm~\texttt{SigVar-Alg} of~\citet{cao2018sigmoidal}). 
Appendix~\ref{subsec:sigapproximpl} lists details of our implementation of this method.
The second, third, and fourth columns of Table~\ref{tab:sigvar} present the overall solution time in seconds (or a failure status returned by IPOPT), the best objective value and the true risk level of the corresponding solution returned by the method over two replicates (we note that there was significant variability in solution times over the replicates; we report the results corresponding to the smaller solution times).
Figure~\ref{fig:portfolio} plots the solutions returned by this method (using the values listed in Table~\ref{tab:sigvar}).
The relatively poor performance of Algorithm~\texttt{SigVar-Alg} on this example is not surprising; even the instance of the sigmoidal approximation problem with only a hundred scenarios (which is small for $\alpha = 0.01$) has more than a thousand variables and a hundred thousand nonzero entries in the (dense) Jacobian. Therefore, tailored approaches have to be developed for solving these problems efficiently.
Since our implementation of the proposal of~\citet{cao2018sigmoidal} failed to perform well on this instance even for a single risk level, 
%with a reasonable amount of effort, 
we do not compare against their approach for the rest of the case studies.
\end{casestudy}

\begin{figure}[ht!]
\begin{center}
\caption{Comparison of the efficient frontiers for Case study~\ref{case:portfolio}.} \label{fig:portfolio}
\includegraphics[width=0.85\linewidth]{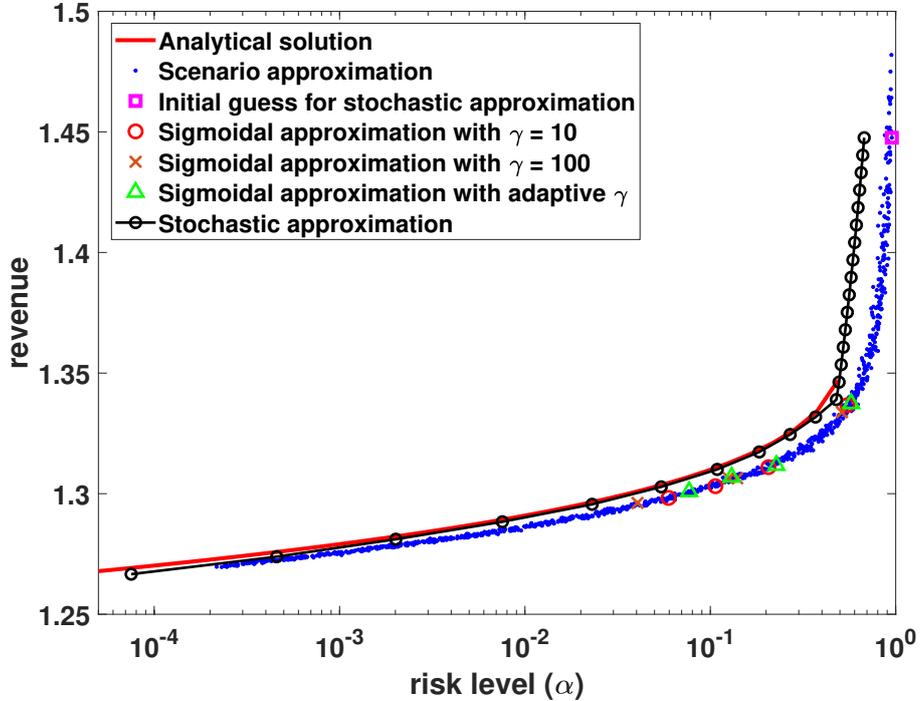}
\end{center}
\end{figure}

\begin{table}[ht!]
\caption{Performance of the sigmoidal smoothing approach of~\protect\citet{cao2018sigmoidal} on Case study~\ref{case:portfolio}. The entry for each column of $\gamma$ indicates the time in seconds (or IPOPT status)/best objective value/best risk level returned by the sigmoidal approximation method over two replicates. The entries $\protect\infeasible$ and $\protect\tle$ denote infeasible and time limit exceeded statuses returned by IPOPT.}
\label{tab:sigvar}
\begin{center}
    \begin{tabular}{| c | c c c | c c c | c c c |}
    \hline
    Num.~scenarios & & $\gamma = 10$ & & & $\gamma = 100$ & & & $\gamma$ adaptive & \\ \hline
     & Time  & Obj  & Risk & Time  & Obj  & Risk & Time & Obj & Risk \\ \hline
    100 & 34 & 1.337 & 0.548 & 23 & 1.334 & 0.513 & 29 & 1.337 & 0.575 \\ \hline
    500 & 508 & 1.311 & 0.206 & 106 & 1.306 & 0.139 & 59 & 1.312 & 0.227 \\ \hline
    1000 & 606 & 1.303 & 0.106 & 1203 & 1.306 & 0.125 & 1011 & 1.307 & 0.130 \\ \hline
    2000 & 1946 & 1.298 & 0.059 & 296 & 1.296 & 0.040 & 2787 & 1.301 & 0.077 \\ \hline
    5000 & 4050 & 1.226 & $2.4 \times 10^{-8}$ & 6289 & 1.292 & 0.025 & & $\infeasible$ & \\ \hline
    10000 & 2708 & 1.199 & $5.1 \times 10^{-11}$ & & $\tle$ & & & $\tle$ & \\ \hline
    \end{tabular}
\end{center}
\end{table}

\begin{casestudy}
\label{case:portfolio_var}
We consider the following variant of Case study~\ref{case:portfolio} where the variance of the portfolio is minimized subject to the constraint that the return is larger than a fixed threshold at least with a given probability:
%on the $100\alpha$ percentile of the return:
\begin{alignat*}{2}
&\uset{x \in \Delta_N}{\min} \:\: && \tr{x} \Sigma x \\
&\quad \text{s.t.} && \prob{\tr{\xi} x \geq \bar{t}} \geq 1 - \alpha,
\end{alignat*}
where we require that the $100\alpha$ percentile of the return is at least $\bar{t} = 1.2$.
Once again, we consider the instance with number of stocks $N = 1000$, and assume the returns $\xi$ are normally distributed as in Case study~\ref{case:portfolio} so that we can benchmark our approach against the analytical solution for the true EF.
% when the risk level $\alpha \leq 0.5$.

We found that using a general-purpose NLP solver to solve the quadratically-constrained quadratic
program to compute projections onto the set $X_{\nu} := \Set{x \in \Delta_N}{\tr{x} \Sigma x \leq \nu}$ during the
course of Algorithm~\ref{alg:propalgoef} was a bottleneck of the algorithm in this case. Thus, we implemented a tailored projection routine that exploits the structure of this projection problem.
Appendix~\ref{subsec:tailproj_portvar} presents details of the projection routine.
Figure~\ref{fig:portfolio_var} compares a typical EF obtained using our approach against the analytical EF and the solutions generated by the tuned scenario approximation algorithm.
Our proposal is able to generate solutions that lie on or very close to the true EF, whereas the scenario approximation method finds
solutions that can be improved either in objective or risk. Our proposed approach took $1556$ seconds on average (and a maximum of $1654$ seconds) to approximate the EF using $20$ points, whereas the tuned scenario approximation method took a total of $36259$ seconds to generate its $1000$ points in Figure~\ref{fig:portfolio_var}.
\end{casestudy}

\begin{figure}[ht!]
\begin{center}
\caption{Comparison of the efficient frontiers for Case study~\ref{case:portfolio_var}.} \label{fig:portfolio_var}
\includegraphics[width=0.85\linewidth]{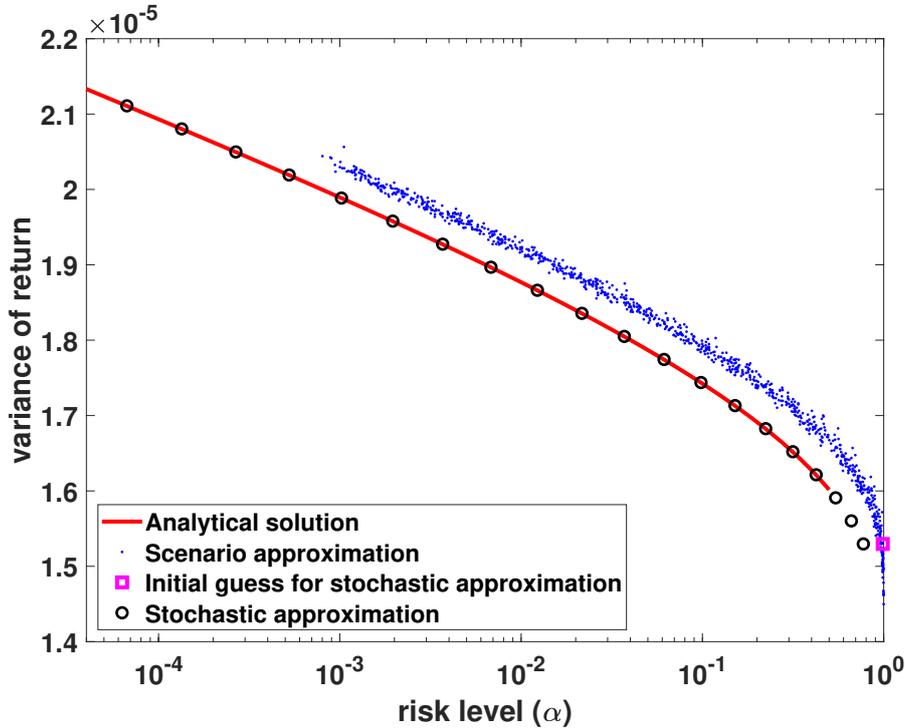}
\end{center}
\end{figure}

\begin{casestudy}
\label{case:normopt}
This norm optimization instance is based on Section~5.1.2 of~\citet{hong2011sequential}, and includes a joint convex
nonlinear chance constraint:
\begin{alignat*}{2}
&\uset{x \in \R^n_+}{\min} \:\: && -\sum_i x_i \\
&\:\: \text{s.t.} && \prob{\sum_i \xi^2_{ij} x^2_i \leq U^2, \:\: j = 1,\cdots,m} \geq 1 - \alpha, 
\end{alignat*}
where $\xi_{ij}$ are dependent normal random variables with mean $\tfrac{j}{d}$ and variance $1$, and cov$(\xi_{ij},\xi_{i^{'}j}) = 0.5$ if $i \neq i^{'}$, cov$(\xi_{ij},\xi_{i^{'}j^{'}}) = 0$ if $j \neq j^{'}$. We consider the instance with number of variables $n = 100$, number of constraints $m = 100$, and bound $U = 100$.
Figure~\ref{fig:normopt} compares a typical EF obtained using our approach against the solutions generated by the tuned scenario approximation algorithm.
Once again, our proposed approach is able to find a significantly better approximation of the EF than the scenario approximation method, although it is unclear how our proposal fares compared to the true EF since it is unknown in this case. Our proposal took $6742$ seconds on average (and a maximum of $7091$ seconds) to approximate the EF using $31$ points, whereas it took tuned scenario approximation a total of $25536$ seconds to generate its $1000$ points in Figure~\ref{fig:normopt}.
We note that more than $70\%$ of the reported times for our method is spent in generating random numbers because the random variable $\xi$ is high-dimensional and the covariance matrix of the random vector $\xi_{{\cdot}j}$ is full rank. A practical instance might have a covariance matrix rank that is at least a factor of ten smaller, which would reduce our overall computation times roughly by a factor of three (cf.~the smaller computation times reported for the similar Case study~\ref{case:normopt2} in the appendix).
Appendix~\ref{app:computexp} benchmarks our approach against the true EF when the random variables $\xi_{ij}$ are assumed to be i.i.d.~in which case the analytical solution is known (see Section~5.1.1 of~\citet{hong2011sequential}).
Once again, note that Assumption~\ref{ass:xcompact} does not hold because the set $X_{\nu} = \Set{x \in \R^n_+}{\sum_i x_i \geq -\nu}$ is not compact; however, we can easily deduce an upper bound on each $x_i$, $i \in \{1,\cdots,n\}$, that is required for feasibility of the chance constraint for the largest (initial) risk level of interest and incorporate this upper bound within the definition of $X_{\nu}$ without altering the EF.
%In practice, we find that the iterates explored by the algorithm always lie within a compact subset of $X_{\nu}$ for this instance.
\end{casestudy}

\begin{figure}
\begin{center}
\caption{Comparison of the efficient frontiers for Case study~\ref{case:normopt}.} \label{fig:normopt}
\includegraphics[width=0.85\linewidth]{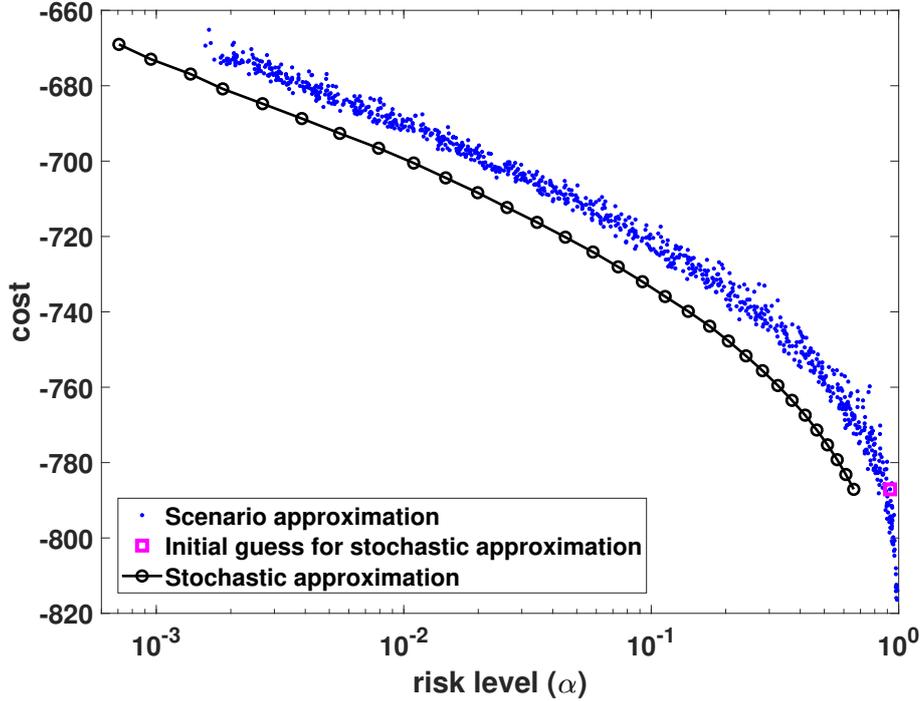}
\end{center}
\end{figure}

\begin{casestudy}
\label{case:resource}
This probabilistic resource planning instance is based on Section~3 of~\citet{luedtke2014branch}, and is modified to include a nonconvex recourse constraint:
\begin{alignat*}{2}
&\uset{x \in \R^n_+}{\min} \:\: && \tr{c} x \\
&\:\: \text{s.t.} && \prob{x \in R(\lambda,\rho)} \geq 1 - \alpha, 
\end{alignat*}
where $x_i$ denotes the quantity of resource $i$, $c_i$ denotes the unit cost of resource $i$,
\[
R(\lambda,\rho) = \Set{x \in \R^n_+}{\exists y \in \R^{n n_c}_+ \text{ s.t. } \sum_{j=1}^{n_c} y_{ij} \leq \rho_i x^2_i, \: \forall i \in \{1,\cdots,n\}, \:\: \sum_{i=1}^{n} \mu_{ij} y_{ij} \geq \lambda_j, \: \forall j \in \{1,\cdots,n_c\}},
\]
$n_c$ denotes the number of customer types, $y_{ij}$ denote the amount of resource $i$ allocated to customer type $j$,
$\rho_i \in (0,1]$ is a random variable that denotes the yield of resource $i$, $\lambda_j \geq 0$ is a random variable
that denotes the demand of customer type $j$, and $\mu_{ij} \geq 0$ is a deterministic scalar that denotes the service
rate of resource $i$ for customer type $j$. Note that the nonlinear term $\rho_i x^2_i$ in the definition of $R(\lambda,\rho)$ is a modification of the corresponding linear term $\rho_i x_i$ in~\citet{luedtke2014branch}. This change could be interpreted as a reformulation of the instance in~\citet{luedtke2014branch} with concave objective costs (due to economies of scale).
We consider the instance with number of resources $n = 20$ and number of customer types $n_c = 30$. Details of how the parameters of the model are set (including details of the random variables) can be found in the electronic companion to~\citet{luedtke2014branch}.

Figure~\ref{fig:resource} compares a typical EF obtained using our approach against the solutions generated by the tuned scenario approximation algorithm.
The blue dots in the top part of Figure~\ref{fig:resource} correspond to the $1000$ points obtained using the scenario
approximation method when IPOPT is used to solve the scenario approximation problems. We mention that the vertical axis
has been truncated for readability; the scenario approximation yields some solutions that are further away from the EF (with objective values up to $110$ for the risk levels of interest).
The $1000$ red dots in the bottom part of Figure~\ref{fig:resource} correspond to the points obtained by solving the scenario approximation problems to global optimality using SCIP.
The top black curve (with circles) corresponds to the EF obtained using the stochastic approximation method when it is initialized using the IPOPT solution of a scenario approximation problem. When a better initial point obtained by solving a scenario approximation problem using SCIP is used, the stochastic approximation method generates the bottom green curve (with squares) as its approximation of the EF.

Several remarks are in order. The scenario approximation solutions generated using the local solver IPOPT are very scattered possibly because IPOPT gets stuck at suboptimal local minima. However, the best solutions generated using IPOPT provide a comparable approximation of the EF as the stochastic approximation method that is denoted by the black curve with circles (which appears to avoid the poor local minima encountered by IPOPT). 
We note that IPOPT finds a good local solution at one of the $1000$ scenario approximation runs (indicated by the blue circle). 
Solving the scenario approximation problems to global optimality using SCIP yields a much better approximation of the EF
than the local solver IPOPT. Additionally, when the proposed approach is initialized using the global solution obtained
from SCIP for a single
scenario approximation problem (which took less than $60$ seconds to compute), it generates a significantly better
approximation of the EF that performs comparably to the EF generated using the global solver SCIP.
We also tried initializing the solution of IPOPT with the one global solution from SCIP, and  this did not yield a
better approximation of the EF. 
Our approach took $9148$ seconds on average (and a maximum of $9439$ seconds) to generate the green curve (with squares) approximation of the EF using $26$ points,
whereas it took the blue scenario approximations (solved using IPOPT) and the red scenario approximations (solved using SCIP) a total of $110291$ seconds and $145567$ seconds, respectively, to generate their $1000$ points in Figure~\ref{fig:resource}.
\end{casestudy}

\begin{figure}
\begin{center}
\caption{Comparison of the efficient frontiers for Case study~\ref{case:resource}.} \label{fig:resource}
\includegraphics[width=0.85\linewidth]{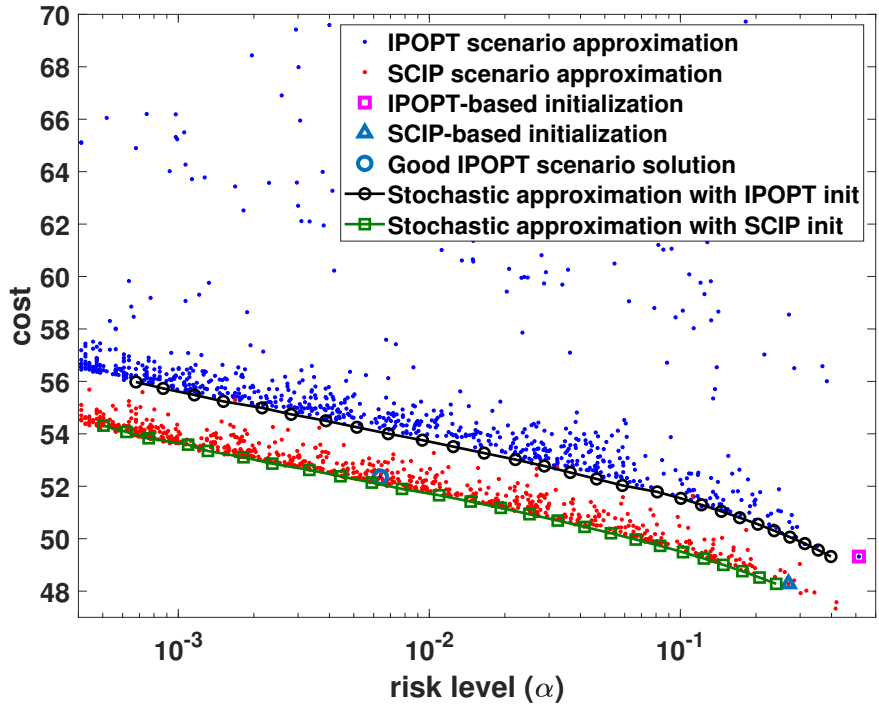}
\end{center}
\end{figure}

\section{Conclusion and future work}
\label{sec:conclandfw}

We proposed a stochastic approximation algorithm for estimating the efficient frontier of chance-con\-strained NLPs.
Our proposal involves solving a sequence of partially smoothened stochastic optimization problems to local optimality using a projected stochastic subgradient algorithm.
We established that every limit point of the sequence of stationary/global solutions of the above sequence of approximations yields a stationary/global solution of the original chance-constrained program with an appropriate risk level. 
A potential advantage of our proposal is that it can find truly stationary solutions of the chance-constrained NLP unlike scenario-based approaches that may get stuck at spurious local optima generated by sampling.
Our computational experiments demonstrated that our proposed approach is consistently able to determine good approximations of the efficient frontier in reasonable computation times.

Extensions of our proposal that can handle multiple sets of joint chance constraints merit further investigation.
One option is to minimize the maximum risk of constraints violation over the various sets of joint chance constraints, which can be formulated as a minimax stochastic program (cf.~Section~3.2 of~\citet{nemirovski2009robust}). This can in turn be reformulated as a weakly convex-concave saddle point problem that can be solved to stationarity using existing techniques~\citep{rafique2018non}. 
Since our proposal relies on computing projections efficiently, approaches for reducing the computational effort spent on projections, such as random constraint projection techniques, could be explored.
Additionally, extensions that incorporate deterministic nonconvex constraints in a more natural fashion provide an avenue for future work.
The projected stochastic subgradient method of~\citet{davis2018stochastic} has recently been extended to the non-Euclidean case~\citep{zhang2018convergence}, which could accelerate convergence of our proposal in practice.
Finally, because stochastic approximation algorithms are an active area of research, several auxiliary techniques, such as adaptive step sizes, parallelization, acceleration, etc.,~may be determined to be applicable (and practically useful) to our setting.

\section*{Acknowledgements}
The authors thank the anonymous reviewers for suggestions that improved the paper.
R.K.\ also thanks Rui Chen, Eli Towle, and Cl{\'e}ment Royer for helpful discussions.
%\end{acknowledgements}

% Authors must disclose all relationships or interests that 
% could have direct or potential influence or impart bias on 
% the work: 
%
% \section*{Conflict of interest}
%
%The authors declare that they have no conflict of interest.

{
\small
\bibliographystyle{plainnat}
\bibliography{main}
}

\appendix

\section{Algorithm outlines}
\label{app:algos}

\subsection{Implementation of the scenario approximation algorithm}
\label{subsec:scenapproximpl}

Algorithm~\ref{alg:scenapprox} details our implementation of the tuned scenario approximation algorithm. 
Instead of solving each scenario approximation problem instance using an off-the-shelf solver by imposing all of the sampled constraints at once, we adopt a cutting-plane approach that iteratively adds some of the violated scenario constraints at a candidate solution to the list of enforced constraints in a bid to reduce the overall computational effort (cf.\ Algorithm~2.6 in~\citet{bienstock2014chance}).
We use $M = 50$ iterations and $R = 20$ replicates for our computational experiments. Settings for sample sizes $N_i$, constraints added per iteration $N_c$, and number of samples $N_{MC}$ are problem dependent and provided below.
Line 19 in Algorithm~\ref{alg:scenapprox}, which estimates the risk level of a candidate solution, may be replaced, if possible, by one that computes the true analytical risk level or a numerical estimate of it~\citep{van2014gradient}.
\begin{itemize}
\item $N_i = \ceil{10^{a_i}}$, $i = 1,\cdots,50$, where: for Case study~\ref{case:portfolio}: $a_i = 1 + \frac{5}{49}(i-1)$; for Case study~\ref{case:portfolio_var}: $a_i = 1 + \frac{4}{49}(i-1)$; for Case study~\ref{case:normopt}: $a_i = 1 + \frac{\log_{10}(50000)-1}{49}(i-1)$; for Case study~\ref{case:resource}: $a_i = 1 + \frac{4}{49}(i-1)$; and for Case study~\ref{case:normopt2}: $a_i = 1 + \frac{4}{49}(i-1)$ (the upper bounds on $N_i$ were determined based on memory requirements)
\item $N_c$: Case study~\ref{case:portfolio}: $1000$; Case study~\ref{case:portfolio_var}: 100000; Case study~\ref{case:normopt}: $10$; Case study~\ref{case:resource}: $5$; and Case study~\ref{case:normopt2}: $10$ (these values were tuned for good performance)
\item $N_{MC}$: See Section~\ref{subsec:impldet} of the paper (we use the same Monte Carlo samples that were used by our proposed method to estimate risk levels).
\end{itemize}

\begin{algorithm}
\caption{Scenario approximation for approximating the efficient frontier of~\eqref{eqn:ccp}}
\label{alg:scenapprox}
\begin{algorithmic}[1]
\State \textbf{Input}: Number of major iterations $M$, distinct sample sizes $N_i$, $i = 1,\cdots,M$, number of replicates per major iteration $R$, maximum number of scenario constraints added per iteration $N_c$, number of samples to estimate risk levels $N_{MC}$, and initial guess $\hat{x} \in X$.

\State \textbf{Output}: Pairs $(\bar{\nu}^{i,r},\bar{\alpha}^{i,r})$, $i \in \{1,\cdots,M\}$ and $r \in \{1,\cdots,R\}$, of objective values and risk levels that can be used to approximate the efficient frontier, and corresponding sequence of solutions $\{\bar{x}^{i,r}\}$.

\State \textbf{Preparation}: Draw a (fixed) sample $\{\bar{\xi}^{l}\}_{l=1}^{N_{MC}}$ from $\P$ for estimating risk levels of candidate solutions.

\For{\textup{major iteration} $i = 1$ to $M$}
\For{\textup{replicate} $r = 1$ to $R$}

\State Sample: Draw i.i.d. sample $\{\xi^{l,r}\}_{l=1}^{N_i}$ from $\P$.

\State Initialize: Starting point $x^{0} := \hat{x}$, list of scenarios enforced for each chance constraint $\I^0_k = \emptyset$, $k \in \{1,\cdots,m\}$,

\Statex \hspace*{0.5in} and counter $q = 0$.

\Repeat

\State Update $q \leftarrow q+1$, number of scenario constraints violated $V \leftarrow 0$, $\I^{q}_k = \I^{q-1}_k$, $\forall k$.

\State Solve the following scenario approximation problem (locally) using the initial guess $x^{q-1}$ to obtain

\Statex \hspace*{0.60in} a solution $x^q$ and corresponding optimal objective $\nu^q$:
\begin{align*}
\uset{x \in X}{\min} \:\: & f(x) \\
\text{s.t.} \:\: & g_k(x,\xi^{l,r}) \leq 0, \quad \forall l \in \I^{q-1}_k.
\end{align*}

\For{$k = 1$ to $m$}

\State Evaluate $k^{\text{th}}$ random constraint $g_k$ at $x^q$ for each of the scenarios $l = 1,\cdots,N_i$.

\State Sort the values $\{g_k(x^q,\xi^{l,r})\}_{l=1}^{N_i}$ in decreasing order.

\State Increment $V$ by the number of scenario constraints satisfying $g_k(x^q,\xi^{l,r}) > 0$.
%, $l = 1,\cdots,N_i$. 

%\Statex \hspace*{1in} $l = 1,\cdots,N_i$.

\State Add at most $N_c$ of the most violated scenario indices $l$ to $\I^q_k$.

\EndFor

\Until{$V = 0$}

\State Set $\hat{x} = x^{i,r} = x^q$ and $\bar{\nu}^{i,r} = \nu^q$ to their converged values.

\State Estimate risk level $\bar{\alpha}^{i,r}$ of candidate solution $\hat{x}$ using the sample $\{\bar{\xi}^{l}\}_{l=1}^{N_{MC}}$.
%(see Section~\ref{subsec:algoparams}).

\EndFor
\EndFor
\end{algorithmic}
\end{algorithm}

\subsection{Solving~\eqref{eqn:ccp} for a fixed risk level}
\label{subsec:algofixedrisk}

Algorithm~\ref{alg:propalgoccp} adapts Algorithm~\ref{alg:propalgoef} to solve~\eqref{eqn:ccp} for a given risk level $\hat{\alpha} \in (0,1)$.
An initial point $\bar{x}^0$ and an upper bound on the optimal objective value $\nu_{up}$ can be obtained in a manner similar to Algorithm~\ref{alg:propalgoef}, whereas an initial lower bound on the optimal objective value $\nu_{low}$ can be obtained either using lower bounding techniques (see~\citep{nemirovski2006convex,luedtke2008sample}), or by trial and error.
Note that the sequence of approximations in line 9 of Algorithm~\ref{alg:propalgoccp} need not be solved until termination if Algorithm~\ref{alg:propalgo} determines that $\bar{\alpha}^i < \hat{\alpha}$ before its termination criteria have been satisfied.

\begin{algorithm}
\caption{Solving~\eqref{eqn:ccp} for a fixed risk level}
\label{alg:propalgoccp}
\begin{algorithmic}[1]

\State \textbf{Input}: target risk level $\hat{\alpha} \in (0,1)$, `guaranteed' lower and upper bounds on the optimal objective value $\nu_{low}$ and $\nu_{up}$ with $\nu_{low} \leq \nu_{up}$, and initial point $\bar{x}^0 \in X$.

\State \textbf{Set algorithmic parameters}: in addition to line 2 of Algorithm~\ref{alg:propalgoef}, let $\nu_{tol} > 0$ denote an optimality tolerance.

\State \textbf{Output}: approximate optimal objective value $\hat{\nu}$ of~\eqref{eqn:ccp} for a risk level of $\hat{\alpha}$.

\State \textbf{Preprocessing}: let $\bar{\nu}^0 = \frac{1}{2}(\nu_{low} + \nu_{up})$, and determine smoothing parameters $\{\tau_{k,j}\}_{k=1}^{K}$ scaled at $\proj{\bar{x}^0}{X_{\bar{\nu}^0}}$ using Algorithm~\ref{alg:scaling} and an initial sequence of step lengths $\{\bar{\gamma}_k\}_{k=1}^{K}$ for the corresponding sequence of approximating problems~\eqref{eqn:app} with $\nu = \bar{\nu}^0$ using Algorithm~\ref{alg:steplength}.

\State \textbf{Optimization Phase}:

\State Initialize index $i = 0$.

\Repeat

\State Update iteration count $i \leftarrow i+1$ and set objective bound $\bar{\nu}^i = \frac{1}{2}(\nu_{low} + \nu_{up})$.

\State Obtain $(\bar{\alpha}^i,\bar{x}^i)$ by solving sequence of approximations~\eqref{eqn:app} using Algorithm~\ref{alg:propalgo} with the above algorithmic

\Statex \hspace*{0.2in} parameter settings and $\proj{\bar{x}^{i-1}}{X_{\bar{\nu}^i}}$ as the initial guess.

%\If{$\bar{\alpha}^i \geq \hat{\alpha}$}
%\State Set $\nu_{low} \leftarrow \bar{\nu}^i$.
%\Else
%\State Set $\nu_{up} \leftarrow \bar{\nu}^i$.
%\EndIf

\IfThenElse {$\bar{\alpha}^i \geq \hat{\alpha}$}% If ...
      {set $\nu_{low} \leftarrow \bar{\nu}^i$}% ...then...
      {set $\nu_{up} \leftarrow \bar{\nu}^i$}% ...else...

\Until{$\nu_{low} \geq \nu_{up} - \nu_{tol}$}

\State Set $\hat{\nu} = \nu_{up}$.

\end{algorithmic}
\end{algorithm}

\subsection{Implementation of the sigmoidal approximation algorithm}
\label{subsec:sigapproximpl}

We used the following `tuned' settings to solve each iteration of the sigmoidal approximation problem (see Section~4 of~\citet{cao2018sigmoidal}) using IPOPT: \texttt{tol} $= 10^{-4}$, \texttt{max\_iter = 10000}, \texttt{hessian\_approximation = limited\_memory}, \texttt{jac\_c\_constant=yes}, and \texttt{max\_cpu\_time=3600} seconds. 
We terminated the loop of Algorithm~\texttt{SigVar-Alg} of~\citet{cao2018sigmoidal} when the objective improved by less than $0.01\%$ relative to the previous iteration. 
In what follows, we use the notation of Algorithm~\texttt{SigVar-Alg} of~\citet{cao2018sigmoidal}.
For our `adaptive $\gamma$' setting, we use the proposal of~\citet{cao2018sigmoidal} to specify $\gamma$ when the solution of the CVaR problem corresponds to $t_c(\alpha) < 0$. When $t_c(\alpha) = 0$ at the CVaR solution returned by Gurobi, we try to estimate a good value of $t_c(\alpha)$ by looking at the true distribution of $g(x_c(\alpha),\xi)$.

\subsection{Tailored projection step for Case study~\ref{case:portfolio_var}}
\label{subsec:tailproj_portvar}

Because Algorithm~\ref{alg:propalgoef} requires projecting onto the sets $X_{\nu} := \Set{x \in \Delta_N}{\tr{x} \Sigma x \leq \nu}$ many times for Case study~\ref{case:portfolio_var} with different values of $\nu$, we develop a tailored projection routine that is computationally more efficient than solving quadratically-constrained quadratic programs to compute these projections.
To project a point $y \in \R^N$ onto $X_{\nu}$ for some $\nu > 0$, note that the KKT conditions for the projection problem $\uset{x \in X_{\nu}}{\min} 0.5\norm{x-y}^2$ yield:
\begin{align*}
x^*_i &= \frac{y_i-\mu+\pi_i}{1+2\sigma^2_i\lambda}, \quad x^*_i \geq 0, \quad \pi_i \geq 0, \quad x^*_i \pi_i = 0, \quad \forall i \in \{1,\cdots,N\}, \\
\lambda &\geq 0, \quad \sum_{i=1}^{N} x^*_i = 1, \quad \sum_{i=1}^{N} \alpha^2_i (x^*_i)^2 \leq \nu, \quad \lambda \left( \sum_{i=1}^{N} \alpha^2_i (x^*_i)^2 - \nu \right) = 0,
\end{align*}
where $x^* \in \R^N$ denotes the projection and $\mu$, $\lambda$, and $(\pi_1,\cdots,\pi_N)$ denote KKT multipliers. Note that the inequalities in the first row can be simplified as $x^*_i = \max\left\{0,\frac{y_i-\mu}{1+2\sigma^2_i\lambda}\right\}$, $\forall i \in \{1,\cdots,N\}$.
We first check whether $\lambda = 0$ satisfies the above system of equations (i.e., if the quadratic constraint is inactive at the solution) by setting $x^*$ to be the projection of $y$ onto the unit simplex.
If $\lambda = 0$ is infeasible, we solve the following system of equations by using binary search over $\lambda$:
\begin{align*}
x^*_i &= \max\left\{0,\frac{y_i-\mu}{1+2\sigma^2_i\lambda}\right\}, \:\: \forall i \in \{1,\cdots,N\}, \quad \sum_{i=1}^{N} x_i = 1, \quad \sum_{i=1}^{N} \alpha^2_i (x^*_i)^2 = \nu,
\end{align*}
where for each fixed $\lambda > 0$, the first two sets of equations in $x^*$ and $\mu$ are solved by adapting the algorithm of~\citet{condat2016fast} for projection onto the unit simplex.

\section{Proof of Proposition~\ref{prop:clarkegradlimit}}
\label{subsec:propclarkegradlimit}

We first sketch an outline of the proof (note that our proof will follow a different order for reasons that will become evident). We will establish that (see Chapters~4 and~5 of~\citet{rockafellar2009variational} for definitions of technical terms)
\begin{align}
\label{eqn:seplimsup}
\uset{k \to \infty}{\limsup_{x \to \bar{x}}}\: \partial \ph_k(x) + N_{\xv}(x) &= \uset{k \to \infty}{\limsup_{x \to \bar{x}}}\: \partial \ph_k(x) + \uset{k \to \infty}{\limsup_{x \to \bar{x}}}\: N_{\xv}(x).
\end{align}
Then, because of the outer semicontinuity of the normal cone mapping, it suffices to prove that the outer limit of $\partial \ph_k(x)$ is a subset of $\{\nabla p(\bar{x})\}$.
To demonstrate this, we will first show that $\partial \ph_k(x) = - \partial \int_{-\infty}^{\infty} F(x,\eta) d\phi\left(\eta;\tau_c^{k-1}\right) d\eta$, where $F$ is the cumulative distribution function of $\max\left[\bar{g}(x,\xi)\right]$. Then, we will split this integral into two parts - one accounting for tail contributions (which we will show vanishes when we take the outer limit), and the other accounting for the contributions of $F$ near $(x,\eta) = (\bar{x},0)$ that we will show satisfies the desired outer semicontinuity property.

Since $\ph_k(x)$ can be rewritten as $\ph_k(x) = \bexpect{\phi\left(\max\left[\bar{g}(x,\xi)\right];\tau_c^{k-1}\right)}$
by Assumptions~\ref{ass:phibasic}B and~\ref{ass:phiandcdfass}, we have
\begin{align*}
\ph_k(x) = \Bexpect{\phi\left(\max\left[\bar{g}(x,\xi)\right];\tau_c^{k-1}\right)} &= \int_{-\infty}^{+\infty} \phi\left(\max\left[\bar{g}(x,\xi)\right];\tau_c^{k-1}\right) d\Pb \\
&= \int_{-\infty}^{+\infty} \phi\left(\eta;\tau_c^{k-1}\right) dF(x,\eta) \\
&= \lim_{\eta \to +\infty} \phi\left(\eta;\tau_c^{k-1}\right) - \int_{-\infty}^{+\infty} F(x,\eta) d\phi\left(\eta;\tau_c^{k-1}\right) d\eta,
\end{align*} 
where the second line is to be interpreted as a Lebesgue-Stieljes integral, and the final step follows by integrating by parts and Assumption~\ref{ass:phibasic}. This yields (see Proposition~\ref{prop:clarkegradapprox} and Assumption~\ref{ass:phiandcdfass} for the existence of these quantities)
\begin{align}
\label{eqn:clarkegradapprox}
\partial \ph_k(x) &\subset - \partial \int_{\abs{\eta} \geq \varepsilon_k} F(x,\eta) d\phi\left(\eta;\tau_c^{k-1}\right) d\eta - \partial \int_{-\varepsilon_k}^{\varepsilon_k} F(x,\eta) d\phi\left(\eta;\tau_c^{k-1}\right) d\eta
\end{align}
by the properties of the Clarke generalized gradient, where $\{\varepsilon_k\}$ is defined in Assumption~\ref{ass:phiandcdfass}.

Let $\{x_k\}$ be any sequence in $\xv$ converging to $\bar{x} \in \xv$. Suppose $v_k \in - \partial \int_{\abs{\eta} \geq \varepsilon_k} F(x_k,\eta) d\phi\left(\eta;\tau_c^{k-1}\right) d\eta$ with $v_k \to v \in \R^n$. We would like to show that $v = 0$.
Note that by an abuse of notation (where we actually take norms of the integrable selections that define $v_k$)
\begin{align}
\label{eqn:tailcontrib}
\lim_{k \to \infty} \norm{v_k} = \lim_{k \to \infty} \norm*{- \partial \int_{\abs{\eta} \geq \varepsilon_k} F(x_k,\eta) d\phi\left(\eta;\tau_c^{k-1}\right) d\eta} &= \lim_{k \to \infty} \norm*{\int_{\abs{\eta} \geq \varepsilon_k} \partial_x F(x_k,\eta) d\phi\left(\eta;\tau_c^{k-1}\right) d\eta} \nonumber\\
&\leq \lim_{k \to \infty} \int_{\abs{\eta} \geq \varepsilon_k} \norm*{\partial_x F(x_k,\eta)} d\phi\left(\eta;\tau_c^{k-1}\right) d\eta \nonumber\\
&\leq \lim_{k \to \infty} \int_{\abs{\eta} \geq \varepsilon_k} L_F(\eta) d\phi\left(\eta;\tau_c^{k-1}\right) d\eta = 0,
\end{align}
where the first step follows from Theorem~2.7.2 of~\citet{clarke1990optimization} (whose assumptions are satisfied by
virtue of Assumption~\ref{ass:phiandcdfass}B), the second inequality follows from Proposition~2.1.2
of~\citet{clarke1990optimization}, and the final equality follows from Assumption~\ref{ass:phiandcdfass}C.

The above arguments establish that $\limsup\limits_{\substack{x \to \bar{x} \\ k \to \infty}}\: - \partial \int_{\abs{\eta} \geq \varepsilon_k} F(x,\eta) d\phi\left(\eta;\tau_c^{k-1}\right) d\eta = \{0\}$. Consequently, we have $\limsup\limits_{\substack{x \to \bar{x} \\ k \to \infty}}\: \partial \ph_k(x) \subset \limsup\limits_{\substack{x \to \bar{x} \\ k \to \infty}}\: - \partial \int_{-\varepsilon_k}^{\varepsilon_k} F(x,\eta) d\phi\left(\eta;\tau_c^{k-1}\right) d\eta$ from Equation~\eqref{eqn:clarkegradapprox}. We now consider the outer limit $\limsup\limits_{\substack{x \to \bar{x} \\ k \to \infty}}\: - \partial \int_{-\varepsilon_k}^{\varepsilon_k} F(x,\eta) d\phi\left(\eta;\tau_c^{k-1}\right) d\eta$.
Suppose $w_k \in - \partial \int_{-\varepsilon_k}^{\varepsilon_k} F(x_k,\eta) d\phi\left(\eta;\tau_c^{k-1}\right) d\eta$ with $w_k \to w \in \R^n$. We wish to show that $w = \nabla p(\bar{x})$.
Invoking Theorem~2.7.2 of~\citet{clarke1990optimization} once again, we have that
\begin{align}
\label{eqn:maincontrib}
\uset{k \to \infty}{\limsup}\: - \partial \int_{-\varepsilon_k}^{\varepsilon_k} F(x_k,\eta) d\phi\left(\eta;\tau_c^{k-1}\right) d\eta &\subset \uset{k \to \infty}{\limsup}\: - \int_{-\varepsilon_k}^{\varepsilon_k} \nabla_x F(x_k,\eta) d\phi\left(\eta;\tau_c^{k-1}\right) d\eta  \nonumber\\
&= \uset{k \to \infty}{\limsup}\: - \left(\int_{-\varepsilon_k}^{\varepsilon_k} \nabla_x F(x_k,\eta) d\hat{\phi}\left(\eta\right) d\eta\right) \left(\int_{-\varepsilon_k}^{\varepsilon_k} d\phi\left(z;\tau_c^{k-1}\right) dz\right) \nonumber\\
&= \left(\uset{k \to \infty}{\limsup}\: - \int_{-\varepsilon_k}^{\varepsilon_k} \nabla_x F(x_k,\eta) d\hat{\phi}\left(\eta\right) d\eta\right) \:\: \left(\lim_{k \to \infty} \int_{-\varepsilon_k}^{\varepsilon_k} d\phi\left(z;\tau_c^{k-1}\right) dz\right) \nonumber\\
&= \uset{k \to \infty}{\limsup}\: - \int_{-\varepsilon_k}^{\varepsilon_k} \nabla_x F(x_k,\eta) d\hat{\phi}_k\left(\eta\right) d\eta, 
\end{align}
where for each $k \in \N$ large enough, $\hat{\phi}_k:\R \to \R$ is defined as $\hat{\phi}_k\left(y\right) =
\dfrac{\phi\left(y;\tau_c^{k-1}\right)}{\int_{-\varepsilon_k}^{\varepsilon_k} d\phi\left(z;\tau_c^{k-1}\right) dz}$, the
first step follows (by an abuse of notation) from the fact that $\varepsilon_k < \theta$ for $k$ large enough (see
Assumption~\ref{ass:phiandcdfass}B), and the third and fourth steps follow from Assumption~\ref{ass:phiandcdfass}C. 
Noting that\footnote{Lemma~4.2 of~\citet{shan2014smoothing} reaches a stronger conclusion from Equation~\eqref{eqn:maincontrib} than our result (they only require that the distribution function $F$ is locally Lipschitz continuous). We do not adapt their arguments since we are unable to follow a key step (the first step in~\citep[pg.~191]{shan2014smoothing}) in their proof.}
\[
\int_{-\varepsilon_k}^{\varepsilon_k} \nabla_x F(x_k,\eta) d\hat{\phi}_k(\eta) d\eta = 
\tr{\begin{bmatrix}
\dfrac{\partial F}{\partial x_1}(x_k,\omega_{1,k}) \cdots \dfrac{\partial F}{\partial x_n}(x_k,\omega_{n,k})
\end{bmatrix}}
\int_{-\varepsilon_k}^{\varepsilon_k} d\hat{\phi}_k(\eta) d\eta = 
\begin{bmatrix}
\dfrac{\partial F}{\partial x_1}(x_k,\omega_{1,k}) \\
\vdots \\
\dfrac{\partial F}{\partial x_n}(x_k,\omega_{n,k})
\end{bmatrix}
\]
for some constants $\omega_{i,k} \in (-\varepsilon_k,\varepsilon_k)$, $i = 1,\cdots,n$, by virtue of
Assumption~\ref{ass:phiandcdfass}B and the first mean value theorem for definite integrals, we have 
\begin{align*}
\uset{k \to \infty}{\limsup}\: - \partial \int_{-\varepsilon_k}^{\varepsilon_k} F(x_k,\eta) d\phi\left(\eta;\tau_c^{k-1}\right) d\eta &\subset \uset{k \to \infty}{\limsup}\: \left\lbrace - \tr{\begin{bmatrix}
\dfrac{\partial F}{\partial x_1}(x_k,\omega_{1,k}) \cdots \dfrac{\partial F}{\partial x_n}(x_k,\omega_{n,k})
\end{bmatrix}}\right\rbrace \\
&= \{-\nabla_x F(\bar{x},0)\} = \{\nabla p(\bar{x})\},
\end{align*}
where the first equality above follows from Assumption~\ref{ass:phiandcdfass}B, and the second equality follows from the fact that $p(x) = 1 - F(x,0)$ for each $x \in \xv$. Reconciling our progress with Equation~\eqref{eqn:clarkegradapprox}, we obtain
\[
\uset{k \to \infty}{\limsup_{x \to \bar{x}}}\: \partial \ph_k(x) \subset \{\nabla p(\bar{x})\}.
\]
To establish the desirable equality in Equation~\eqref{eqn:seplimsup}, it suffices to show that $\partial \ph_k(x_k) \subset C$ for $k$ large enough, where $C \subset \R^n$ is independent of $k$.
From Equation~\eqref{eqn:tailcontrib}, we have that the first term in the right-hand side of Equation~\eqref{eqn:clarkegradapprox} is contained in a bounded set that is independent of $k \in \N$.
From Equation~\eqref{eqn:maincontrib}, we have that any element of the second term in the right-hand side of Equation~\eqref{eqn:clarkegradapprox} is bounded above in norm by $\max\limits_{\substack{x \in \cl{B_{\delta}(\bar{x})} \\ \eta \in [-0.5\theta,0.5\theta]}} \norm*{\nabla_x F(x,\eta)}$ for $k$ large enough for any $\delta > 0$. The above arguments in conjunction with Equation~\eqref{eqn:clarkegradapprox} establish Equation~\eqref{eqn:seplimsup}. The desired result then follows from the outer semicontinuity of the normal cone mapping, see Proposition~6.6 of~\citet{rockafellar2009variational}.

%\subsection{Proof of Proposition~\ref{prop:weakconvparamrec}}
%\label{subsec:propweakconvparamrec}

\section{Recourse formulation}
\label{app:recourse}

As noted in Section~\ref{sec:introduction} of the paper, Problem~\eqref{eqn:ccp} can also be used to model chance-constrained programs with static recourse decisions, e.g., by defining $g$ through the solution of the following auxiliary optimization problem for each $(x,\xi) \in X \times \Xi$:
\begin{alignat*}{2}
g(x,\xi) := \: &\uset{y, \eta}{\min} \:\: && \eta \\
&\:\: \text{s.t.} && T(x,\xi) + W(\xi) y + \eta e \geq 0, \\
& && y \in \R^{n_y}_+, \:\: \eta \geq -1,
\end{alignat*}
where $T: \R^n \times \R^d \to \R^{m_r}$ is continuously differentiable, and $W:\R^d \to \R^{m_r \times n_y}$.
Note that $g$ can be recast in the form of a joint chance constraint by appealing to linear programming duality, viz.,
\[
g(x,\xi) = \uset{(v,w) \in \mathrm{EXT}(V(\xi))}{\max} -\tr{v} T(x,\xi) - w,
\]
where $\mathrm{EXT}(V(\xi))$ denotes the set of extreme points of the polytope 
\[
V(\xi) = \Set{(v,w) \in \R^{m_r}_+ \times \R_+}{\tr{v}W(\xi) \leq 0, \: \tr{v} e + w = 1}.
\]
Explicitly reformulating the recourse constraints into the above explicit form is not generally practical 
%(cf.~Algorithms~\ref{alg:scaling} and~\ref{alg:steplength}) 
since Algorithms~\ref{alg:scaling} and~\ref{alg:steplength} rely on stepping through each of the constraint functions,
and the cardinality of the set of extreme points of $V(\xi)$ may be huge.
Therefore, we consider the case when the approximations $\ph_k$ are constructed using a single smoothing function, i.e.,
$\ph_k(x) = \bexpect{\phi_k(g(x,\xi))}$, where $\{\phi_k\}$ is a sequence of smooth scalar functions that approximate the step function.

Throughout this section, we assume that $g$ that can be reformulated as $g(x,\xi) := \uset{j \in J}{\max} \:\: h_j(x,\xi)$, where $J$ is a (potentially large) finite index set, and $h_j:\R^n \times R^d \to \R$ are continuously differentiable functions that are Lipschitz continuous and have Lipschitz continuous gradients in the sense of Assumption~\ref{ass:confunc}.
We let $L^{'}_h(\xi)$ denote the Lipschitz constant of $\nabla h(\cdot,\xi)$ on $\xv$ with $\bexpect{\left(L^{'}_h(\xi)\right)^2} < +\infty$.
While the theory in this section is also applicable to the ordinary joint chance constrained setting, it is usually advantageous to consider individual smoothing functions for each chance constraint function in that case whenever feasible.
%As an aside, we mention that imposing the lower bound constraint $\eta \geq -1$ in the definition of $g$ is not necessary (and it may in fact be advantageous to omit it) when the set 
%\[
%\Set{z \in \R}{z = \uset{i = 1,\cdots,m_r}{\min} T_i(x,\xi) + \tr{\left(w_i(\xi)\right)} y \:\: \text{for some } (x,y,\xi) \in X \times \R^{n_y}_+ \times \Xi}
%\]
%is bounded from above, where $w_i(\xi)$ denotes the vector corresponding to the $i^{\textup{th}}$ row of $W(\xi)$.

We first characterize the Clarke generalized gradient of the approximation $\ph_k$ under this setup.

\begin{proposition}
\label{prop:subdiffrec}
Suppose Assumptions~\ref{ass:phibasic} and~\ref{ass:phiadvanced} and the above assumptions on $g(x,\xi) = \max\left[h(x,\xi)\right]$ and $\phi_k$ hold.
Then $\partial \ph_k(x) = \bexpect{d\phi_k\left(\max\left[h(x,\xi)\right]\right) \times \partial_x \max\left[h(x,\xi)\right]}$.
\end{proposition}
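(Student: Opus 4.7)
The plan is to mirror closely the proof of Proposition~\ref{prop:clarkegradapprox}, but with the roles of $\phi_k$ and $\max$ swapped so that the max is now inside the composition rather than outside. Because $\phi_k$ is nondecreasing, the composition with $\max[h(\cdot,\xi)]$ still admits a clean chain-rule expression in terms of $d\phi_k$ and $\partial_x \max[h(\cdot,\xi)]$.

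First I would establish that for each fixed $\xi \in \Xi$, the integrand $\phi_k(\max[h(\cdot,\xi)])$ is locally Lipschitz continuous and Clarke regular on $\xv$. The inner function $\max[h(\cdot,\xi)]$ is Clarke regular by Proposition~2.3.12 of~\citet{clarke1990optimization} since each $h_j(\cdot,\xi)$ is continuously differentiable (and therefore strictly differentiable and Clarke regular by the corollary to Proposition~2.2.1 of~\citet{clarke1990optimization}). Then, since $\phi_k$ is continuously differentiable (Assumption~\ref{ass:phibasic}A) and nondecreasing (Assumption~\ref{ass:phibasic}B), so $d\phi_k \geq 0$, Theorem~2.3.9(ii) of~\citet{clarke1990optimization} (the chain rule for Clarke regular compositions with a nondecreasing $C^1$ outer function) applies, yielding both Clarke regularity of $\phi_k(\max[h(\cdot,\xi)])$ and the pointwise identity
\[
\partial_x \bigl[\phi_k(\max[h(x,\xi)])\bigr] = d\phi_k\bigl(\max[h(x,\xi)]\bigr)\,\partial_x \max[h(x,\xi)].
\]

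Next I would justify interchanging the Clarke subdifferential and the expectation via Theorem~2.7.2 of~\citet{clarke1990optimization}. The hypotheses are (i)~local Lipschitz continuity of $\phi_k(g(\cdot,\xi))$ on $\xv$ with an integrable Lipschitz constant, and (ii)~Clarke regularity of the integrand. Part~(ii) has just been established. For~(i), by the mean value theorem combined with Assumptions~\ref{ass:phibasic}A, \ref{ass:phiadvanced}A, and the assumed Lipschitz continuity of each $h_j(\cdot,\xi)$ with constant $L_{h,j}(\xi)$, the integrand is Lipschitz on $\xv$ with constant $M^{'}_{\phi,k}\max_{j \in J} L_{h,j}(\xi)$, which is integrable under the analogue of Assumption~\ref{ass:confunc}A for $h$. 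Therefore
\[
\partial \ph_k(x) = \bexpect{\partial_x\bigl[\phi_k(\max[h(x,\xi)])\bigr]},
\]
and combining this with the chain-rule identity above produces the stated formula.

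The only delicate point will be verifying that the version of Theorem~2.3.9 in~\citet{clarke1990optimization} applied is correct: one needs $\phi_k$ to be $C^1$ and nondecreasing (so that the Clarke subdifferential of the composition coincides with the "scalar derivative times inner subdifferential" and regularity is preserved). This is exactly what Assumption~\ref{ass:phibasic}A--B gives. Everything else (measurability of the subdifferential selections, the Lebesgue-type interchange, and the reduction of $g$ to $\max[h]$) is routine given the framework already set up for Proposition~\ref{prop:clarkegradapprox}. I do not expect any step to require substantial new machinery beyond what is cited there; the hardest bookkeeping item will be keeping track of the measurable selection in the expectation on the right-hand side, which is covered by Definition~7.39 of~\citet{shapiro2009lectures} as in Proposition~\ref{prop:clarkegradapprox}.
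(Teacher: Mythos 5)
Your proof is correct and follows essentially the same route as the paper: the pointwise identity comes from Clarke's chain rule (Theorem~2.3.9 of~\citet{clarke1990optimization}) using that $\phi_k$ is $C^1$ and nondecreasing, and the interchange of $\partial$ and $\bexpect{\cdot}$ rests on Theorem~2.7.2 of~\citet{clarke1990optimization}. The only cosmetic difference is that you verify the hypotheses of Theorem~2.7.2 directly for the composition, whereas the paper gets the interchange by observing that monotonicity of $\phi_k$ gives $\phi_k\left(\max_j h_j\right) = \max_j \phi_k\left(h_j\right)$ and then reuses Proposition~\ref{prop:clarkegradapprox}; both justifications are valid and rely on the same underlying machinery.
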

\begin{proof}
Note that for any $x \in X$ and $\xi \in \Xi$, we have $\partial_x \phi_k(g(x,\xi)) = d\phi_k(g(x,\xi)) \: \partial_x \max\left[h(x,\xi)\right]$, see Theorem~2.3.9 of~\citet{clarke1990optimization}.
A justification for swapping the derivative and expectation operators then follows from
Proposition~\ref{prop:clarkegradapprox} of the paper and the fact that $\ph_k(x) = \bexpect{\phi_k(g(x,\xi))} = \bexpect{\uset{j}{\max}\left[\phi_k\left(g_j(x,\xi)\right)\right]}$ since $\phi_k$ is monotonically nondecreasing on $\R$.
 \halmos
\end{proof}

The following result establishes that the approximation $\ph_k$ continues to enjoy the weak convexity property for the above setup under mild assumptions.

\begin{proposition}
\label{prop:weakconvparamrec}
Suppose Assumptions~\ref{ass:xcompact},~\ref{ass:phibasic}, and~\ref{ass:phiadvanced} hold.
Additionally, suppose $g(x,\xi) = \max\left[h(x,\xi)\right]$ satisfies the above conditions and $\phi_k$ is a scalar smoothing function.
Then $\ph_k(\cdot)$ is $\bar{L}_k$-weakly convex for some constant $\bar{L}_k$ that depends on $L^{'}_h(\xi)$, $L^{'}_{\phi,k}$, $M^{'}_{\phi,k}$, and the diameter of $\xv$.
\end{proposition}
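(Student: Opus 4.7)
The plan is to reduce the nonsmooth composition to a pointwise maximum of smooth compositions, and then apply the reasoning from Proposition~\ref{prop:weakconvparam} componentwise. The key observation is that, because $\phi_k$ is nondecreasing by Assumption~\ref{ass:phibasic}B, we can swap the max and $\phi_k$:
\[
\phi_k\bigl(\max[h(x,\xi)]\bigr) \;=\; \max_{j\in J}\,\phi_k\bigl(h_j(x,\xi)\bigr),
\]
which turns the outer nonsmoothness into a finite maximum of $C^{1,1}$ functions on $\xv$. So first I would rewrite $\ph_k(x) = \bexpect{\max_{j \in J} \phi_k(h_j(x,\xi))}$.

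Next, for each fixed $\xi\in\Xi$ and $j \in J$, I would argue that $x \mapsto \phi_k(h_j(x,\xi))$ has a Lipschitz gradient on $\xv$. By the chain rule, $\nabla_x \phi_k(h_j(x,\xi)) = d\phi_k(h_j(x,\xi))\,\nabla_x h_j(x,\xi)$, so the standard product-rule estimate yields a Lipschitz constant
\[
L_{k,j}(\xi) \;\leq\; M^{'}_{\phi,k}\,L^{'}_{h,j}(\xi) \;+\; L^{'}_{\phi,k}\,L_{h,j}(\xi)\,M_{h,j}(\xi),
\]
where $M_{h,j}(\xi) := \sup_{x \in \xv}\|\nabla_x h_j(x,\xi)\|$. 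This is the step where Assumption~\ref{ass:xcompact} is essential: since $\xv$ is compact and $\nabla_x h_j(\cdot,\xi)$ is Lipschitz, we have $M_{h,j}(\xi) \leq \|\nabla_x h_j(x_0,\xi)\| + L^{'}_{h,j}(\xi)\,\diam(\xv)$ for any fixed $x_0\in\xv$, which is where the diameter enters the final constant. By Lemma~4.2 of~\citet{drusvyatskiy2016efficiency} (the composition lemma already invoked in Proposition~\ref{prop:weakconvparam}), each $\phi_k(h_j(\cdot,\xi))$ is then $L_{k,j}(\xi)$-weakly convex.

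Finally, I would close under the two remaining operations. The pointwise maximum of $\rho$-weakly convex functions is $\rho$-weakly convex, since $\max_j f_j + \tfrac{\rho}{2}\|\cdot\|^2 = \max_j\bigl(f_j + \tfrac{\rho}{2}\|\cdot\|^2\bigr)$ is a max of convex functions; and taking expectation preserves weak convexity with the expectation of the parameters, by the linearity of $\bexpect{\cdot}$ and the fact that $\bexpect{f(\cdot,\xi) + \tfrac{\rho(\xi)}{2}\|\cdot\|^2}$ is convex whenever the integrand is, provided $\bexpect{\rho(\xi)}<\infty$. Combining these gives $\bar{L}_k \leq \bexpect{\max_{j\in J} L_{k,j}(\xi)}$, a constant depending only on $L^{'}_h(\xi)$, $L^{'}_{\phi,k}$, $M^{'}_{\phi,k}$, and $\diam(\xv)$ as claimed. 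The main obstacle I anticipate is verifying that the resulting parameter is finite, i.e., that $\bexpect{\max_j L_{k,j}(\xi)} < +\infty$; this is handled by Assumption~\ref{ass:confunc} applied to the functions $h_j$ (giving $\bexpect{L^{'}_{h,j}(\xi)} < +\infty$ and $\bexpect{L_{h,j}^2(\xi)} < +\infty$) together with the diameter bound above, which makes all the $\xi$-dependent quantities integrable.
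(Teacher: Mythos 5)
Your proof is correct, but it takes a genuinely different route from the paper's. You push $\phi_k$ inside the max (valid by the monotonicity in Assumption~\ref{ass:phibasic}B), reducing the problem to a finite pointwise maximum of $C^{1,1}$ compositions $\phi_k(h_j(\cdot,\xi))$; you bound each composition's gradient-Lipschitz constant by the product rule (exactly as in Proposition~\ref{prop:lipconstcomp}) and then close under max and expectation, using the elementary facts that both operations preserve weak convexity. The paper instead works directly with the composite $\phi_k\left(\max\left[h(\cdot,\xi)\right]\right)$ without ever enumerating $J$: it starts from the weak convexity of $\max\left[h(\cdot,\xi)\right]$ (a quadratic minorant at a Clarke subgradient $s_y(\xi)$), feeds that minorant through the one-dimensional quadratic lower bound on $\phi_k$ supplied by Assumption~\ref{ass:phiadvanced}B, and chains the two via monotonicity; this produces a spurious $\norm{z-y}^4$ term that is absorbed using $\diam{\xv}$, which is how the diameter enters its final constant $\bar{L}_k = L^{'}_{\phi,k}\bexpect{(L^{'}_h(\xi))^2} + \bexpect{L^{'}_h(\xi)}M^{'}_{\phi,k} + \tfrac{1}{4}L^{'}_{\phi,k}\bexpect{(L^{'}_h(\xi))^2}\diam{\xv}^2$. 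The paper's route is more faithful to the recourse setting, where $J$ is the (huge) set of dual extreme points accessed only through an LP oracle; your route exploits the finiteness of $J$, but since your per-$j$ constants are uniformly controlled by $L^{'}_h(\xi)$ and the Lipschitz constant of $h$, nothing degrades with $\abs{J}$ and the existential claim is fully established. Two minor observations: your detour through $M_{h,j}(\xi) \leq \norm{\nabla_x h_j(x_0,\xi)} + L^{'}_{h,j}(\xi)\diam{\xv}$ is unnecessary, since on the convex set $\xv$ the quantity $M_{h,j}(\xi)$ equals the Lipschitz constant of $h_j(\cdot,\xi)$ itself, so your constant does not actually require the diameter at all (arguably a cleaner bound than the paper's); and your closing step correctly places the max inside the expectation, $\bar{L}_k \leq \bexpect{\max_{j} L_{k,j}(\xi)}$, whose finiteness follows from the second-moment conditions inherited from Assumption~\ref{ass:confunc} as you note.
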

\begin{proof}
First, note that $g(\cdot,\xi) := \max\left[h(\cdot,\xi)\right]$ is $L^{'}_h(\xi)$-weakly convex on $\xv$ for each $\xi \in \Xi$, see Lemma~4.2 of~\citet{drusvyatskiy2016efficiency}. 
In particular, this implies that for any $y, z \in \xv$,
\begin{align*}
g(z,\xi) = \max\left[h(z,\xi)\right] &\geq g(y,\xi) + \tr{s}_y(\xi) (z - y) - \dfrac{L^{'}_h(\xi)}{2} \norm{z - y}^2
\end{align*}
for any $s_y(\xi) \in \partial \max\left[h(y,\xi)\right]$. The Lipschitz continuity of $d\phi_k(\cdot)$ implies that for any scalars $v$ and $w$:
\[
\phi_k(w) \geq \phi_k(v) + d\phi_k(v) (w - v) - \dfrac{L^{'}_{\phi,k}}{2} (w - v)^2.
\]
From the monotonicity Assumption~\ref{ass:phibasic}B and by recursively applying the above result, we have for any $\xi \in \Xi$ and $y, z \in \xv$:
{
\small
\begin{align*}
\phi_k\left(g(z,\xi)\right) &\geq \phi_k\left(g(y,\xi) + \tr{s}_y(\xi) (z - y) - \dfrac{L^{'}_h(\xi)}{2} \norm{z - y}^2 \right) \\
&\geq \phi_k\left( g(y,\xi) + \tr{s}_y(\xi) (z - y) \right) -\dfrac{L^{'}_h(\xi)}{2} d\phi_k\left( g(y,\xi) + \tr{s}_y (z - y) \right) \norm{z - y}^2 - \dfrac{L^{'}_{\phi,k}\left(L^{'}_h(\xi)\right)^2}{8} \norm{z - y}^4  \\
&\geq \phi_k\left( g(y,\xi) \right) + \tr{\left(d\phi_k\left( g(y,\xi) \right) s_y(\xi) \right)} (z - y) - \dfrac{L^{'}_{\phi,k}}{2}\left(\tr{s}_y(\xi) (z - y) \right)^2 - \dfrac{L^{'}_h(\xi) M^{'}_{\phi,k}}{2} \norm{z - y}^2 - \\
&\quad\quad\quad\quad \dfrac{L^{'}_{\phi,k}\left(L^{'}_h(\xi)\right)^2}{8} \norm{z - y}^4 \\
&\geq \phi_k\left( g(y,\xi) \right) + \tr{\left(d\phi_k\left( g(y,\xi) \right) s_y(\xi) \right)} (z - y) - \dfrac{\norm{z - y}^2}{2} \Bigg[ L^{'}_{\phi,k}\norm{s_y(\xi)}^2 + L^{'}_h(\xi) M^{'}_{\phi,k} + \\
&\hspace*{3.5in} \dfrac{L^{'}_{\phi,k} \left(L^{'}_h(\xi)\right)^2 \diam{\xv}^2}{4} \Bigg].
\end{align*}
Taking expectation on both sides and noting that $\norm{s_y(\xi)} \leq L^{'}_h(\xi)$, we get
\begin{align*}
\bexpect{\phi_k\left(g(z,\xi)\right)} &\geq \bexpect{\phi_k\left( g(y,\xi) \right)} + \bexpect{\tr{\left(d\phi_k\left( g(y,\xi) \right) s_y \right)} (z - y)} - \\
&\quad\quad\quad\quad \dfrac{\norm{z - y}^2}{2} \left[ L^{'}_{\phi,k} \bexpect{\left( L^{'}_h(\xi) \right)^2} +
\bexpect{L^{'}_h(\xi)} M^{'}_{\phi,k} + \dfrac{L^{'}_{\phi,k} \bexpect{\left(L^{'}_h(\xi)\right)^2} \diam{\xv}^2}{4} \right].
\end{align*}
}
Therefore, $\ph_k$ is $\bar{L}_k$-weakly convex on $\xv$ with
\[
\bar{L}_k := L^{'}_{\phi,k} \bexpect{\left( L^{'}_h(\xi) \right)^2} + \bexpect{L^{'}_h(\xi)} M^{'}_{\phi,k} +
\dfrac{1}{4} L^{'}_{\phi,k} \bexpect{\left(L^{'}_h(\xi)\right)^2} \diam{\xv}^2. \hspace*{1.95in} \halmos
\]
\end{proof}

We now outline our proposal for estimating the weak convexity parameter $\bar{L}_k$ of $\ph_k$ for the above setting. First, we note that estimate of the constants $L^{'}_{\phi,k}$ and $M^{'}_{\phi,k}$ can be obtained from Proposition~\ref{prop:oursmoothapproxconst}. Next, we propose to replace the unknown constant $\diam{\xv}$ with the diameter $2r$ of the sampling ball in Algorithm~\ref{alg:steplength}. Finally, we note that estimating the Lipschitz constant $L^{'}_h(\xi)$ for any $\xi \in \Xi$ is tricky since it would involve looking at all of the $\abs{J}$ constraints in general, just the thing we wanted to avoid! To circumvent this, we propose to replace $L^{'}_h(\xi)$ in $\bar{L}_k$ with an estimate of the local Lipschitz constant of our choice of the $B$-subdifferential element of $g(\cdot,\xi)$ through sampling (similar to the proposal in Algorithm~\ref{alg:steplength}). When $g$ is defined through the solution of the recourse formulation considered, we can estimate a $B$-subdifferential element by appealing to linear programming duality and use this as a proxy for the `gradient' of $g$. Note that a crude estimate of the step length does not affect the theoretical convergence guarantee of the stochastic subgradient method of~\citet{davis2018stochastic}.

\section{Additional computational results}
\label{app:computexp}

We present results of our replication experiments for the four case studies in the paper, and also consider a variant of Case study~\ref{case:normopt} that has a known analytical solution to benchmark our proposed approach.

Figure~\ref{fig:portfolio_trajec} presents the enclosure of the trajectories of the EF generated by ten replicates of
the proposed approach when applied to Case study~\ref{case:portfolio}. For each value of the objective bound $\nu$, we
plot the smallest and largest risk level determined by Algorithm~\ref{alg:propalgo} at that bound over the different
replicates. We find that the risk levels returned by the proposed algorithm do not vary significantly across the different replicates, with the maximum difference in the risk levels across the $26$ points on the EF being a factor of $1.48$.
Figure~\ref{fig:portfolio_var_trajec} presents the corresponding plot for Case study~\ref{case:portfolio_var}. We again find that the risk levels returned by the proposed algorithm do not vary significantly across the different replicates, with the maximum difference in the risk levels across the $20$ points on the EF being a factor of $1.006$.
Figure~\ref{fig:normopt_trajec} presents the corresponding plot for Case study~\ref{case:normopt}. 
The maximum difference in the risk levels at the $31$ points on the EF for this case is a factor of $1.22$ across the ten replicates.
%We again find that the risk levels returned by the proposed algorithm do not vary significantly across the different replicates, with the maximum difference in the risk levels across the $31$ points on the EF being a factor of $1.21$.
Figure~\ref{fig:resource_trajec} presents the corresponding plot for Case study~\ref{case:resource}. 
The maximum difference in the risk levels at the $26$ points on the EF for this case is a factor of $1.25$ across the ten replicates.

\begin{figure}[ht!]
\begin{center}
\caption{Enclosure of the trajectories of the efficient frontier for Case study~\ref{case:portfolio} generated by ten replicates of the proposed approach.} \label{fig:portfolio_trajec}
\includegraphics[width=0.8\linewidth]{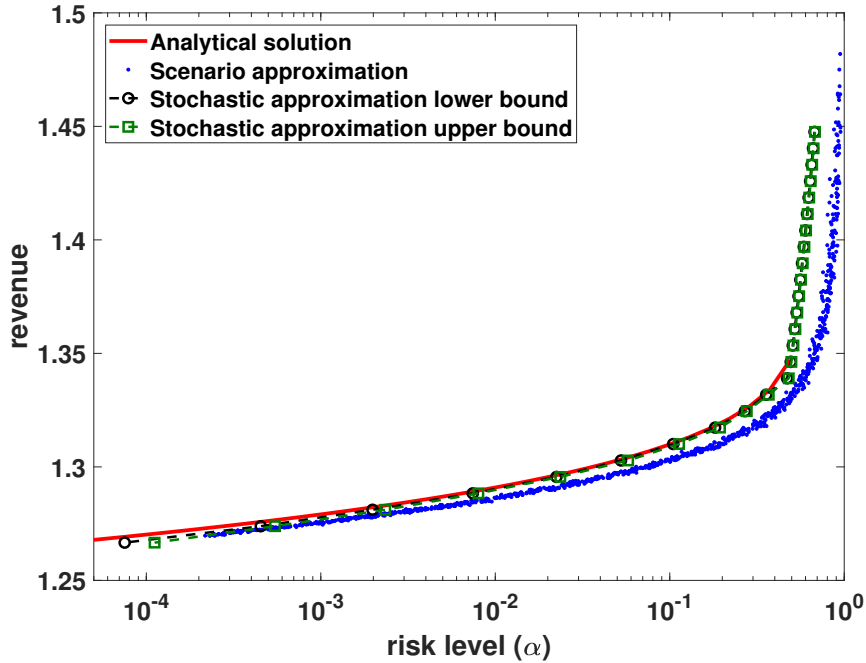}
\end{center}
\end{figure}

\begin{figure}[ht!]
\begin{center}
\caption{Enclosure of the trajectories of the efficient frontier for Case study~\ref{case:portfolio_var} generated by ten replicates of the proposed approach.} \label{fig:portfolio_var_trajec}
\includegraphics[width=0.8\linewidth]{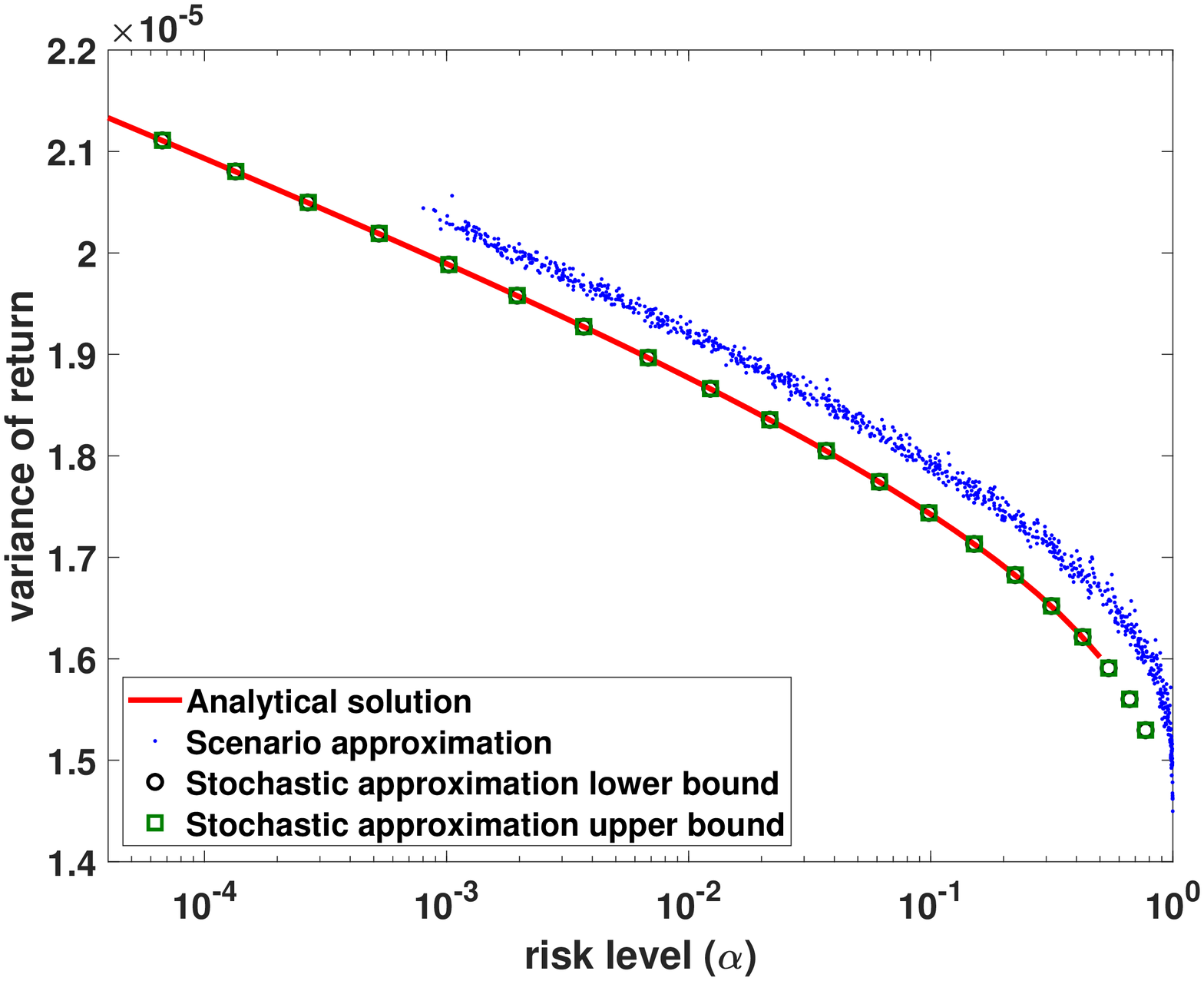}
\end{center}
\end{figure}

\begin{figure}[ht!]
\begin{center}
\caption{Enclosure of the trajectories of the efficient frontier for Case study~\ref{case:normopt} generated by ten replicates of the proposed approach.} \label{fig:normopt_trajec}
\includegraphics[width=0.8\linewidth]{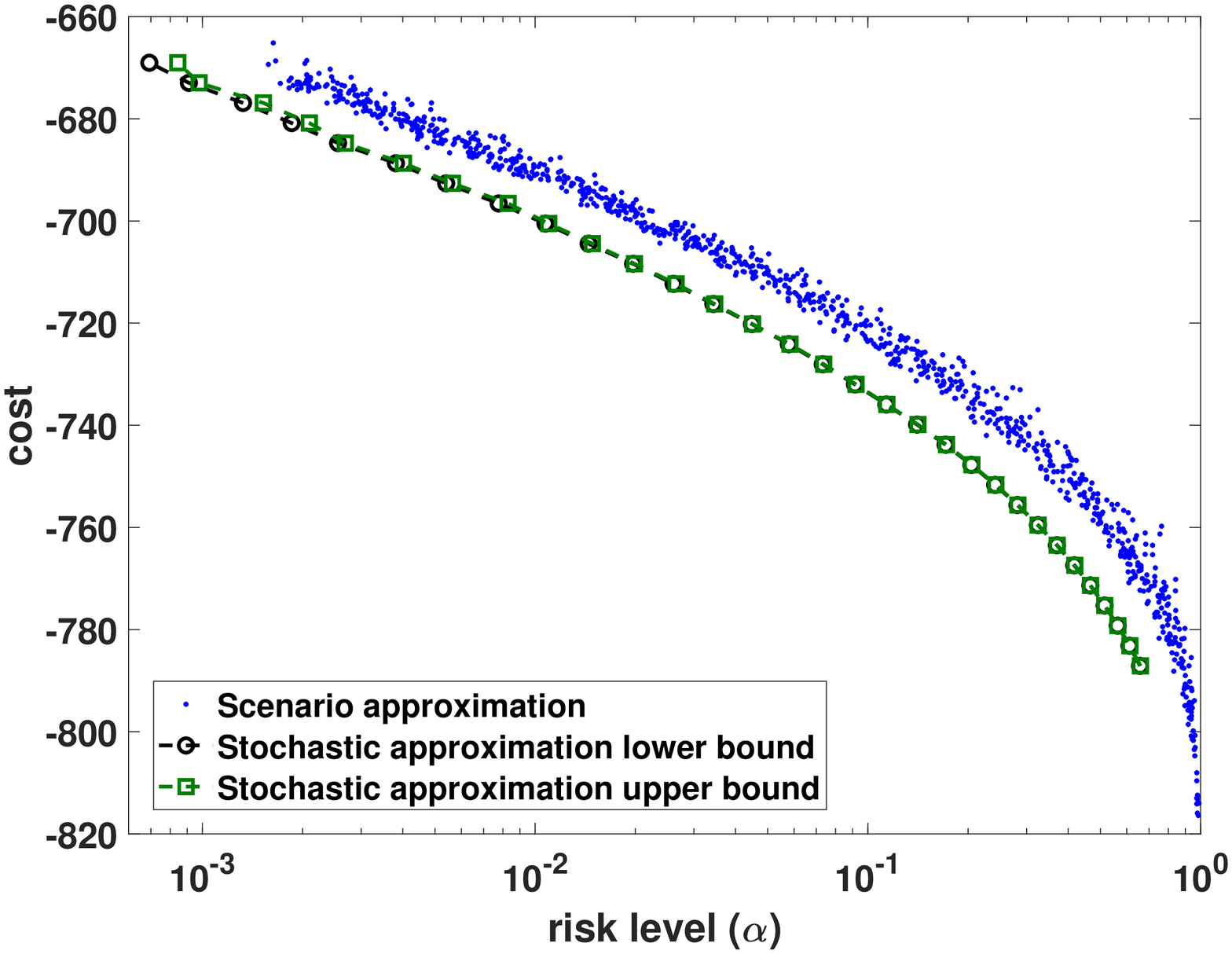}
\end{center}
\end{figure}

\begin{figure}[ht!]
\begin{center}
\caption{Enclosure of the trajectories of the efficient frontier for Case study~\ref{case:resource} generated by ten replicates of the proposed approach.} \label{fig:resource_trajec}
\includegraphics[width=0.8\linewidth]{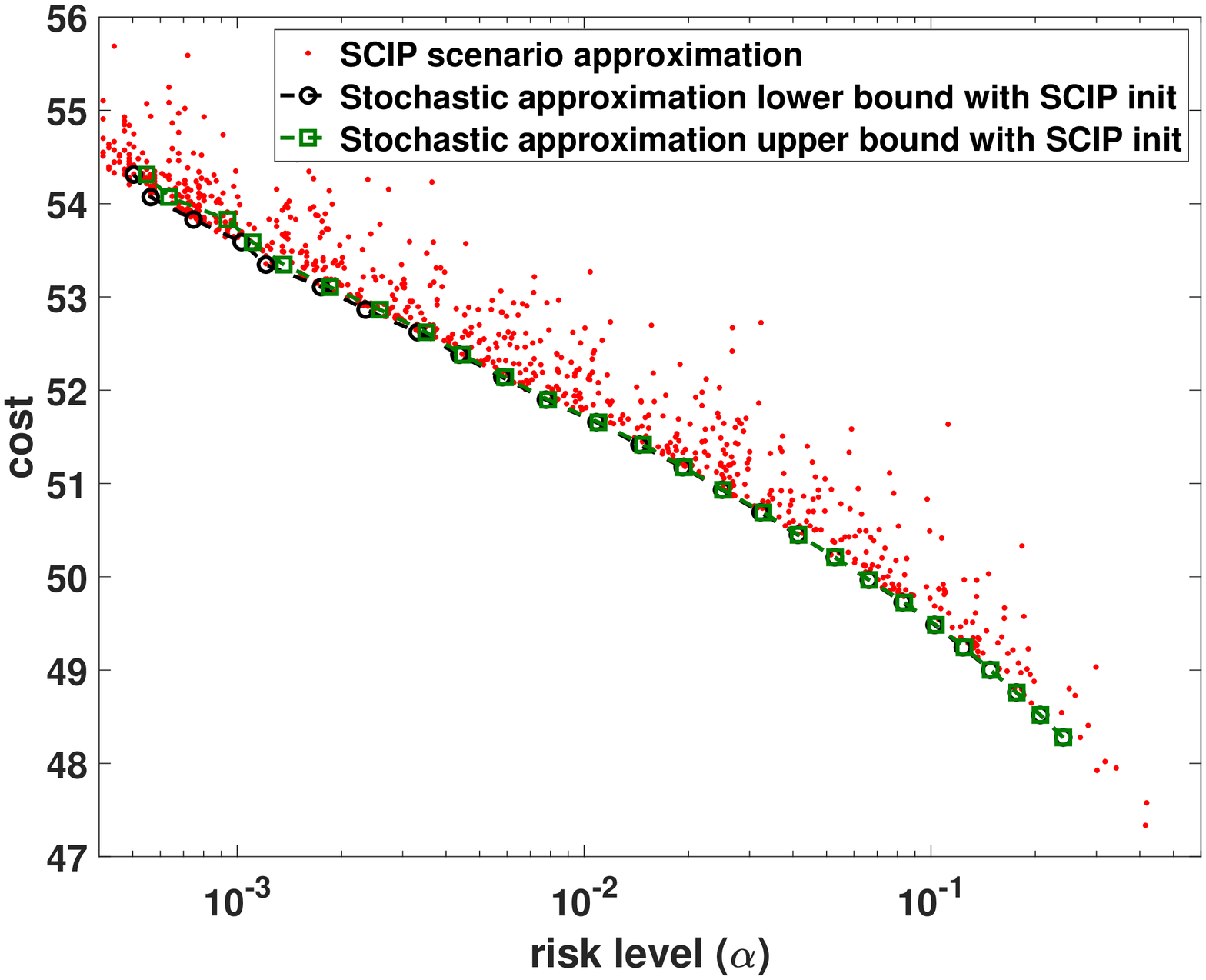}
\end{center}
\end{figure}

\begin{casestudy}
\label{case:normopt2}
We consider Case study~\ref{case:normopt} when the random variables $\xi_{ij} \sim \P := \mathcal{N}(0,1)$ are i.i.d., the number of variables $n = 100$, the number of constraints $m = 100$, and bound $U = 100$.
The EF can be computed analytically in this case, see Section~5.1.1 of~\citet{hong2011sequential}.
Figure~\ref{fig:normopt2} compares a typical EF obtained using our approach against the analytical EF and the solutions generated by the tuned scenario approximation algorithm.
Our proposed approach is able to converge to the analytical EF, whereas the scenario approximation method is only able to determine a suboptimal EF. Our proposal takes $2332$ seconds on average (and a maximum of $2379$ seconds) to approximate the EF using $32$ points, whereas it took the scenario approximation a total of $16007$ seconds to generate its $1000$ points in Figure~\ref{fig:normopt2}.
We note that about $60\%$ of the reported times for our method is spent in generating random numbers because the random variable $\xi$ is high-dimensional.

Figure~\ref{fig:normopt2_trajec} presents an enclosure of the trajectories of the EF generated by the proposed approach
over ten replicates for this case. We find that the risk levels returned by the proposed algorithm do not vary significantly across the different replicates, with the maximum difference in the risk levels across the $32$ points on the EF being a factor of $1.001$.
\end{casestudy}

\begin{figure}[ht!]
\begin{center}
\caption{Comparison of the efficient frontiers for Case study~\ref{case:normopt2}.} \label{fig:normopt2}
\includegraphics[width=0.8\linewidth]{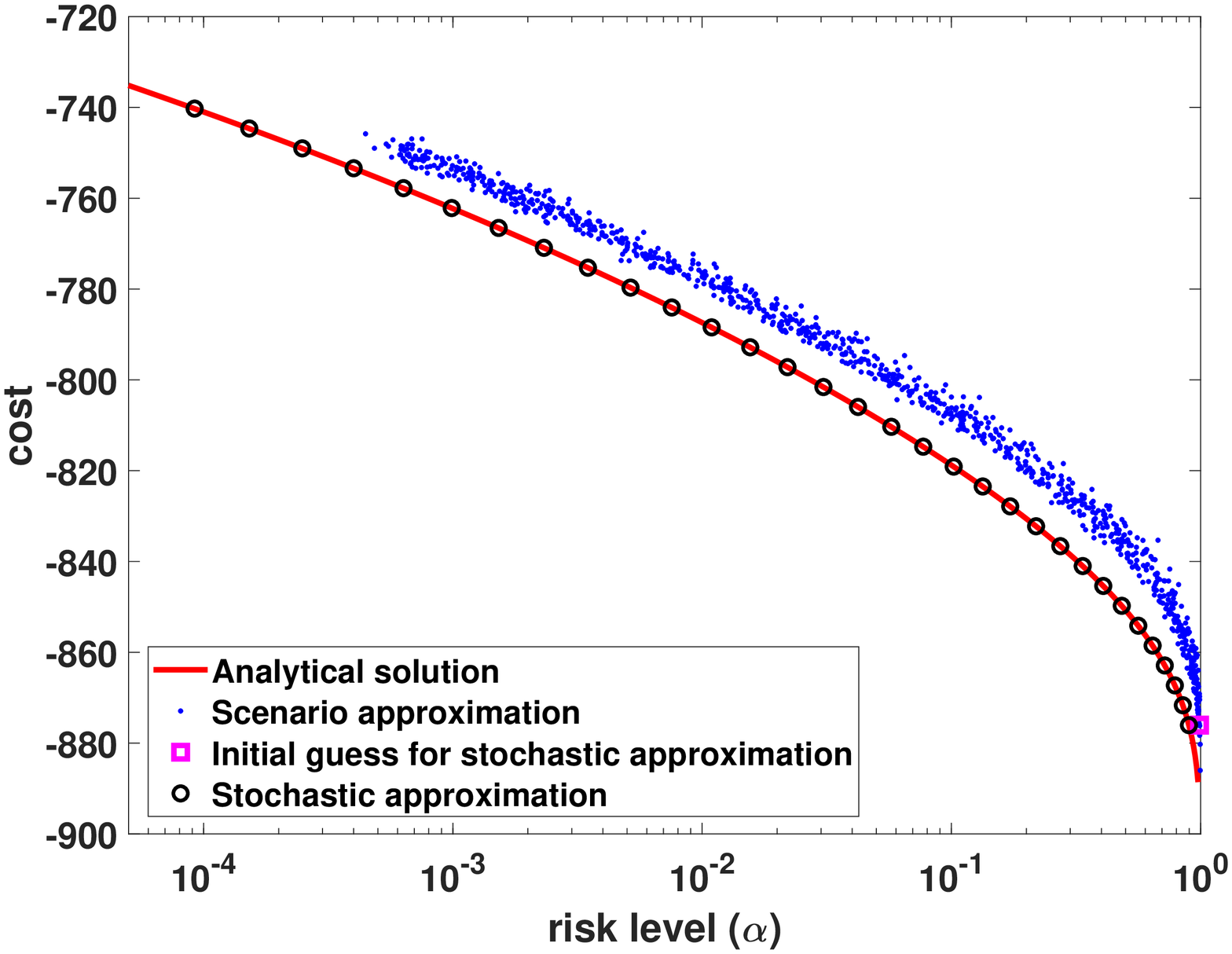}
\end{center}
\end{figure}

\begin{figure}[ht!]
\begin{center}
\caption{Enclosure of the trajectories of the efficient frontier for Case study~\ref{case:normopt2} generated by ten replicates of the proposed approach.} \label{fig:normopt2_trajec}
\includegraphics[width=0.8\linewidth]{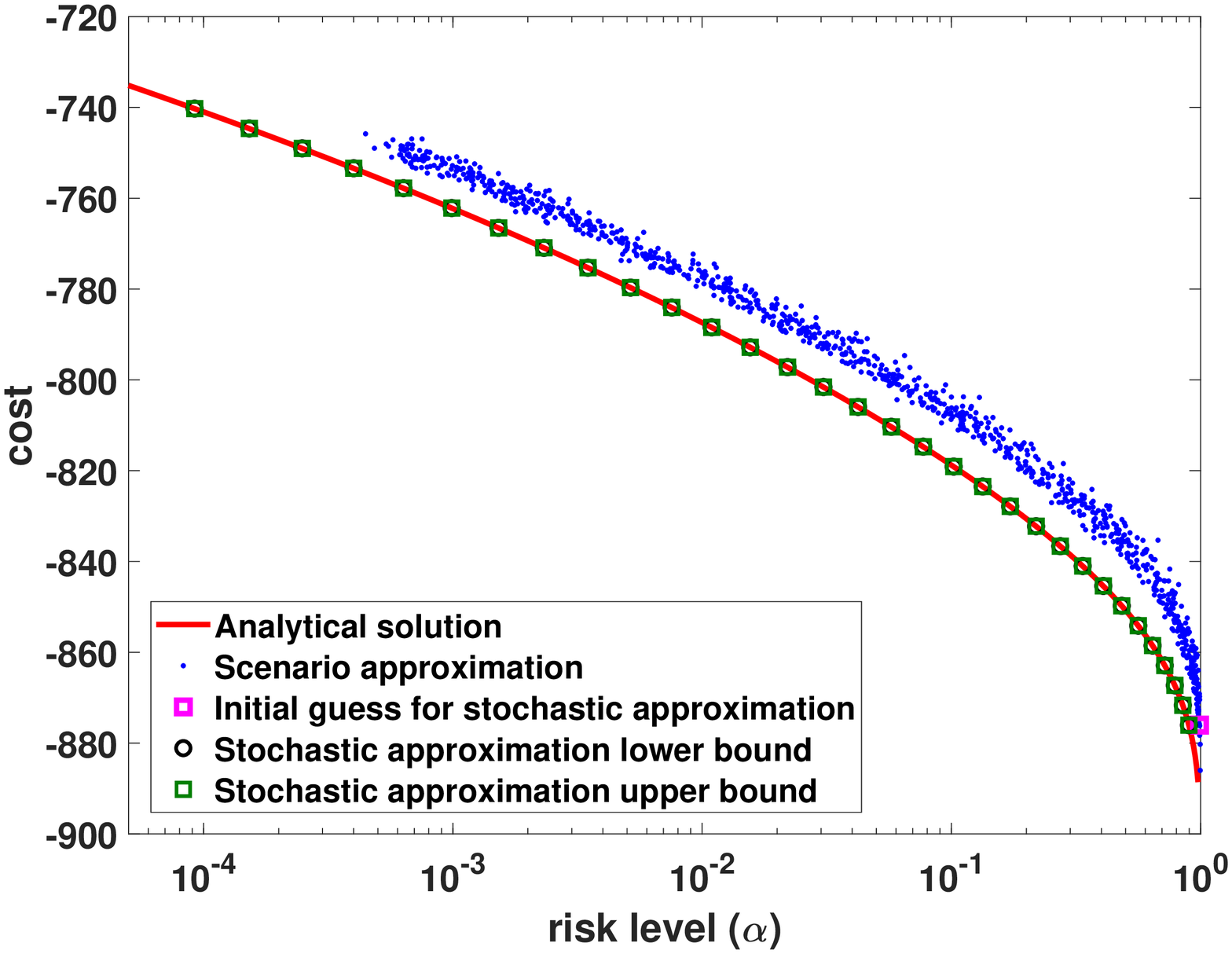}
\end{center}
\end{figure}

%%%%%%%%%%%%%%%%%
\end{document}